\theoremstyle{thmstyleone}%
\newtheorem{theorem}{Theorem}%  meant for continuous numbers
\newtheorem*{conjecture}{Conjecture}
\newtheorem{lemma}[theorem]{Lemma}
\theoremstyle{thmstyletwo}%
\newtheorem{remark}[theorem]{Remark}%
\theoremstyle{thmstylethree}%
\newtheorem{definition}[theorem]{Definition}%
\begin{document}
\allowdisplaybreaks[3]
\title[Proof of a conjecture for two-armed slot machines]{Proof of a conjecture about Parrondo's paradox for two-armed slot machines}

%%=============================================================%%
%% Prefix	-> \pfx{Dr}
%% GivenName	-> \fnm{Joergen W.}
%% Particle	-> \spfx{van der} -> surname prefix
%% FamilyName	-> \sur{Ploeg}
%% Suffix	-> \sfx{IV}
%% NatureName	-> \tanm{Poet Laureate} -> Title after name
%% Degrees	-> \dgr{MSc, PhD}
%% \author*[1,2]{\pfx{Dr} \fnm{Joergen W.} \spfx{van der} \sur{Ploeg} \sfx{IV} \tanm{Poet Laureate}
%%                 \dgr{MSc, PhD}}\email{iauthor@gmail.com}
%%=============================================================%%
\author*[1]{\fnm{Huaijin} \sur{Liang}}\email{652676760@qq.com}
\author[2]{\fnm{Zengjing} \sur{Chen}}\email{zjchen@sdu.edu.cn}

\equalcont{These authors contributed equally to this work.}

\affil*[1]{\orgdiv{Zhongtai Securities Institute for Financial Studies}, \orgname{Shandong University}, \orgaddress{\street{No. 27, Shanda South Road}, \city{Jinan City}, \postcode{250100}, \state{Shandong Province}, \country{China}}}

\affil[2]{\orgdiv{Zhongtai Securities Institute for Financial Studies}, \orgname{Shandong University}, \orgaddress{\street{No. 27, Shanda South Road}, \city{Jinan City}, \postcode{250100}, \state{Shandong Province}, \country{China}}}

%%==================================%%
%% sample for unstructured abstract %%
%%==================================%%

\abstract{The 1936 Mills Futurity slot machine had the feature that, if a player loses 10 times in a row, the 10 lost coins are returned. Ethier and Lee (2010) studied a generalized version of this machine, with 10 replaced by deterministic parameter $J$. They established the Parrondo eﬀect for a hypothetical two-armed machine with the Futurity award. Speciﬁcally, arm $A$ and arm $B$, played individually, are asymptotically fair, but when alternated randomly (the so-called random mixture strategy), the casino makes money in the long run. They also considered the nonrandom periodic pattern strategy for patterns with $r$ $A$s and $s$ $B$s (e.g., $ABABB$ if $r = 2$ and $s = 3$). They established the Parrondo eﬀect if $r + s$ divides $J$, and conjectured it in four other situations, including the case $J = 2$ with $r \ge 1$ and $s \ge 1$. We prove the conjecture in the latter case.}

\keywords{Slot machine, Two-armed bandit problem, Parrondo's paradox, Ethier and Lee's conjecture}

%%\pacs[JEL Classification]{D8, H51}

\pacs[MSC Classification]{60J10, 60F05}

\maketitle

\noindent
\section{Introduction}\label{sec1}

The age of origin of human gambling is estimated to be about the same as that of human civilization. All casino games, such as blackjack, slot machines, roulette, and baccarat, no matter how they are changed, conceal the secret phenomenon of ``long-term bettors will lose": no matter how much any given player occasionally wins, any player who continues to gamble will eventually return winnings to the casino. The law of inevitability behind casinos is determined by the ``law of large numbers" in probability theory. Before the establishment of probability theory and its application to gambling, the secret behind casino profitability remained an empirical phenomenon that each person simply learned from experience. Only with the discovery of the mathematical law of large numbers did it become possible to concretely understand casino profitability. The profitability originates from the ``long-term bettors will lose" phenomenon, which occurs whenever a gambler's asymptotic expected profit per coup is negative, compared to a truly fair game, in which both sides have an asymptotic expected profit per coup of zero. For a game of independent coups, it is
straightforward to determine fairness, but when coups are dependent on past results this determination becomes more challenging.

The Futurity slot machine, designed by the Mills Novelty Company of Chicago, was in production from 1936 to 1941. This slot machine features a pointer at the top of the machine to indicate the current number of consecutive player
losses. It advances by 1 after each loss and resets to 0 after each win. When the pointer reaches 10, all 10 lost coins are returned to the player, and the pointer resets to 0. This payoff is called the Futurity award. Furthermore, the machine's payout distribution depends on the mode it is in, and the mode varies deterministically, having a period of 10. The mode is determined by a cam that rotates through 10 positions. Each time the player pulls the arm, the cam rotates to the next position. Geddes and Saul \cite{geddes1980mills,geddes1980mathematics} used Monte Carlo simulation to study the Futurity, and Ethier and Lee \cite{ethier2010markovian}  later derived analogous
results using analytical methods. In the latter's formulation, for the number of
consecutive losses needed for the Futurity award, 10 was replaced by $J \ge 2$,  and
for the number of cam positions used to determine the mode, 10 was replaced
by $I$, assumed to be a multiple of $J$ (i.e., $I = dJ$ for a positive integer $d$). Ethier and Lee \cite{ethier2010markovian} established a strong law of large numbers for the generalized
Futurity slot machine. The special case $I = J = 10$ corresponds to the original machine.

Parrondo's paradox \cite{parrond1996cheat}  is the counterintuitive conclusion that there exist
two fair games that can be combined, by either random mixture or nonrandom
alternation, to create an unfair game. It was motivated by a model of physical
transport based on Brownian ratchets. See Harmer and Abbott \cite{harmer1999parrondo}. Examples
of Parrondo's paradox and its generalizations \cite{abbott2010asymmetry} can be found in fields such as physics, biology, and economics. Parrondo's paradox also has applications in information theory \cite{harmer2000information}, quantum game theory \cite{abbott2002order}, and other areas \cite{harmer2000paradox,pyke2003random,ethier2009limit,ethier2019strong}.

This paper proceeds as follows. In Section \ref{sec2}, we state some of the results already proven by Ethier and Lee in \cite{ethier2010markovian} and restate their conjecture. In Section \ref{sec3}, we present our main results. In Section \ref{sec5}, we establish the specific value of the casino's asymptotic profit per coup under a nonrandom periodic pattern player strategy (such as $D=ABABB$). In Section \ref{sec6}, we use a typical example to demonstrate one of our means of proving the conjecture. We show that no matter what nonrandom periodic pattern strategy the player implements, the casino remains profitable in the long run, verifying the conjecture of Ethier and Lee.

To reduce the length of this paper, the complete details of the proof are
deferred to Appendices A, B, and C.
version of this paper.

\section{Previous conclusions and the conjecture}\label{sec2}

According to Parrondo's paradox, a winning game can be produced by combining two or more fair games. Pyke \cite{pyke2003random} conceptualized Parrondo's paradox as a two-armed slot machine, in which each arm is ``fair" implying that a gambler who plays either arm ensures that the average cost per coup is close to 0 as the number of coups increases. However, the casino does not restrict the gambler to using a single arm. Pyke \cite{pyke2003random} raised the question of whether casinos can earn profit from ``fair" games. The Parrondo effect is observed when the casino can earn a profit under certain circumstances. Ethier and Lee \cite{ethier2010markovian} used the two-armed Futurity slot machine to answer Pyke's question in the affirmative. We first analyze the generalized one-armed version of the Futurity slot machine and its related conjectures.
\subsection*{One-armed Model:} The internal cam of the slot machine comprises two distinct non-Futurity payout modes sequentially arranged according to the cam's position, advancing one position per coup, which is repeated ad infinitum. Assume that the cam of the slot machine internal control pattern has $I$ positions, denoted as $0$, $1$,$\cdots$, $I-1$. Since the cam rolling is periodic, and any integer multiple of $I$ can be considered a new cam position of the original slot machine. In order to make the results of the one-armed machine can be applied to the two-armed machine played with a particular nonrandom periodic pattern, we can regard $I$ as an integer multiple of $J$ and we assume $I=dJ$ for a positive integer $d$. An irreducible period-$I$ Markov chain ${(X_n,Y_n)}_{n\ge0}$ that controls the slot machine has state space $\sum:=\{0,1,\cdots,I-1\}\times\{0,1,\cdots,J-1\}$. State $(i, j)$ at time $n$ means that the position of cam is at $i$, and the pointer to record the number of consecutive failures is at $j$ after the $n$th coup. Then, the Markov chain with transition probabilities
\begin{align*}
P((i,j),(k,l))&:=P((X_{n+1},Y_{n+1})=(k,l)\mid(X_n,Y_n)=(i,j))\\
&=
\begin{cases}
{p}_i& \text{ if $(k,l)=(i+1($mod $I),0)$ and $j\le J-2$ } \\
{q}_i & \text{ if $(k,l)=(i+1($mod $I),j+1)$ and $j\le J-2$ }\\
1 & \text{ if $(k,l)=(i+1($mod $I),0)$ and $j= J-1$ ,}
\end{cases}
\end{align*}
where $({p}_0,{p}_1,\cdots,{p}_{I-1})$ are the winning probabilities in the various cam positions without Futurity award and ${q}_i:=1-{p}_i$.
At equilibrium, let $\pi$ be the unique stationary distribution (without stating the
formula for it explicitly) and $p^\circ$ is the asymptotic probability that the gambler wins the $J$-coin Futurity award at a particular coup as follows:
\begin{align}\label{eq1}
p^\circ=\sum\limits_{i=0}^{I-1}\pi(i,J-1)q_i=\frac{1}{I\bigg(1-\prod\limits_{j=0}^{I-1}{q}_j\bigg)}\sum\limits_{i=0}^{I-1}\sum\limits_{k=1}^{d}{p}_{i-kJ}\prod\limits_{l=i-kJ+1}^{i}{q}_l,
\end{align}
where ${p}_{i-mI}:={p}_i$ for all $i=0,1,\cdots,I-1$ and $m\ge1$.

Now, we return to the simple two-armed version.
\subsection*{Two-armed Model:} We suppose the cams of each arm have only one mode and do not interfere with each other, that is, the non-Futurity payout modes for each arm are i.i.d. with parameter $p_A$ (resp., $p_B$). Let us take the $A$ arm as an example, we can set $I=J$ and $d=1$, and it is easy to verify that $\tilde{p}_i=p_A$ for all $i=0,1,\cdots,I-1$. If a gambler has been playing using one arm, then using the expression of $p^\circ$ of the one-armed model, the asymptotic probability $p^\circ_A$ (resp., $p^\circ_B$) that the gambler with $A$ (resp., $B$) arm wins the Futurity award has the following form from equation (\ref{eq1}):
\begin{align}\label{eq2}
p^\circ_A=\frac{p_Aq^J_A}{1-q^J_A},\qquad  \qquad \qquad  p^\circ_B=\frac{p_Bq^J_B}{1-q^J_B}.
\end{align}
Here, $p_A$ and $p_B$ respectively represent the probability of winning, using arm $A$ or arm $B$, at each coup  without the Futurity award and $q_A:=1-p_A$, $q_B:=1-p_B$.

\subsection*{Profitable ideas for the casino:} Parrondo game is that if a gambler randomly or periodically switches between two sets of fair games, the gambler can counterintuitively achieve a winning outcome under certain circumstances. Ethier and Lee \cite{ethier2010markovian} adopted Abbott's point \cite{abbott2009developments} of view to offer two ideas for how casinos can be profitable in the long run.

    \subsubsection*{Random-mixture strategy $C$:} One of the ideas is that the casino allows the gambler to randomly switch between the two arms. Specifically, the gambler chooses arm $A$ or $B$ by tossing a $\gamma$-coin before each coup; that is, the probability of heads on each toss is $\gamma$ where $0<\gamma<1$. The gambler plays the $A$ arm when the coin is heads and the $B$ arm when the coin is tails, regardless of previous results. Ethier and Lee called this the random-mixture strategy $C$. They showed that by this strategy, the casino can always make a profit in the long run. The following theorem is Theorem 4 in Section 6 of \cite{ethier2010markovian}.
\begin{theorem}[Ethier and Lee 2010]\label{EL}
Under random-mixture strategy $C$, the casino's asymptotic profit per coup $R_C$ attributable to the two-armed feature of the machine is for $J\ge 2$, $0<\gamma<1$,
 $$R_C=\gamma f(q_A)+(1-\gamma)f(q_B)-f(\gamma q_A+(1-\gamma)q_B),$$
 where $f(z)=J(1-z)z^J/(1-z^J)$, $q_A=1-p_A$, $q_B=1-p_B$.

 Furthermore, if $p_A\neq p_B$, then $R_C>0$ for any random-mixture strategy $C$. The Parrondo effect is observed for any random-mixture strategy $C$, which results from the fact that $f(\cdot)$ is a convex function.
\end{theorem}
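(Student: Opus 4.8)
The plan is to reduce strategy $C$ to a one-dimensional Markov chain on the ``consecutive-loss'' pointer, extract the Futurity probability, express the casino's profit through $f$, and close with a convexity argument.

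First, under strategy $C$ the coups are i.i.d.: at each coup the gambler wins with probability $\bar p:=\gamma p_A+(1-\gamma)p_B$ and loses with probability $\bar q:=1-\bar p=\gamma q_A+(1-\gamma)q_B$, since the $\gamma$-coin toss and the chosen arm's outcome are both refreshed each time. Consequently the pointer process $(Y_n)_{n\ge0}$ --- which increases by $1$ after a loss, resets to $0$ after a win, and also resets to $0$ from state $J-1$ after a loss (the prize being paid on that transition) --- is a finite irreducible aperiodic Markov chain on $\{0,1,\dots,J-1\}$. Solving the balance equations gives $\pi(j)=\bar q^{\,j}\pi(0)$ for $0\le j\le J-1$, and normalization gives $\pi(0)=\bar p/(1-\bar q^{\,J})$. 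Hence the stationary probability that a given coup pays the Futurity prize --- i.e.\ that $Y$ is at $J-1$ and the coup is a loss --- which I denote $p^\circ_C$, equals $\bar q\,\pi(J-1)=\bar p\,\bar q^{\,J}/(1-\bar q^{\,J})$: this is exactly the one-armed quantity $p^\circ$ of (\ref{eq1})--(\ref{eq2}) evaluated with common winning probability $\bar p$, and in particular $J\,p^\circ_C=f(\bar q)$.

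Second, by the ergodic theorem for the finite Markov chain $(Y_n)$ --- the per-coup payoff is a bounded function of the pointer state before the coup together with the fresh win/loss outcome, which is equivalently Ethier and Lee's strong law for this machine --- the casino's asymptotic profit per coup under $C$ equals the stationary mean payoff, namely $(1-2\bar p)-J\,p^\circ_C=(2\bar q-1)-f(\bar q)$ (the casino keeps the stake, returns one coin on a win, and returns $J$ coins on a prize coup). The identical computation for the pure strategies ``always $A$'' and ``always $B$'' gives profits $(2q_A-1)-f(q_A)$ and $(2q_B-1)-f(q_B)$, both of which vanish under the standing assumption that the two arms are individually fair. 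Since $R_C$ is, by definition, the profit under $C$ minus the $\gamma$-weighted average of the two pure-strategy profits, and since $2\bar q-1=\gamma(2q_A-1)+(1-\gamma)(2q_B-1)$, the ``$2\bar q-1$'' contributions cancel and one is left with
$$R_C=\gamma f(q_A)+(1-\gamma)f(q_B)-f(\gamma q_A+(1-\gamma)q_B),$$
which is the asserted formula; when the two arms are fair this is also the total profit under $C$, so the positivity of $R_C$ is precisely the Parrondo effect.

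Finally, for $0<\gamma<1$ and $q_A\ne q_B$ (equivalently $p_A\ne p_B$) the right-hand side is strictly positive by the strict Jensen inequality, as soon as $f(z)=J(1-z)z^J/(1-z^J)$ is strictly convex on $(0,1)$. Thus the whole argument reduces to that convexity, which I expect to be the only analytically substantial point and hence the main obstacle. I would verify it starting from the identity $1/f(z)=\frac{1}{J}(z^{-1}+z^{-2}+\cdots+z^{-J})=:\phi(z)$, which gives $f''=(2(\phi')^2-\phi\phi'')/\phi^3$; substituting $w=1/z\in(1,\infty)$ and clearing positive factors, $f''$ has the sign of $2A'(w)^2-A(w)A''(w)$ with $A(w):=1+w+\cdots+w^{J-1}$, and $2(A')^2-AA''$ is a polynomial with nonnegative coefficients (leading coefficient $J(J-1)$), hence $f''>0$ on $(0,1)$. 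Alternatively one may simply differentiate $f$ twice and check that the numerator of $f''$, a polynomial in $z$, is positive on $(0,1)$. Once convexity of $f$ is in hand, the Parrondo effect for every random-mixture strategy $C$ with $p_A\ne p_B$ follows at once.
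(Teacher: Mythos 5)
This theorem is quoted verbatim from Ethier and Lee (2010) (their Theorem 4 of Section 6); the present paper gives no proof of it, so there is nothing internal to compare against. Judged on its own, your reconstruction is correct and follows exactly the route the paper's statement indicates: under the random mixture the coups are i.i.d.\ with loss probability $\bar q=\gamma q_A+(1-\gamma)q_B$, the pointer chain gives $Jp^\circ_C=f(\bar q)$, the non-Futurity payoffs are linear in the arm-selection distribution and cancel in $R_C$ (this cancellation holds even with arm-dependent conditional returns $u_A\neq u_B$, so your simplified ``win pays two coins'' convention is harmless), and everything reduces to strict convexity of $f$. The one step you leave as ``expected'' does check out: with $A(w)=1+w+\cdots+w^{J-1}$ a direct computation of the coefficients gives $2(A')^2-AA''=J\sum_{t=1}^{J-1}t(J-t)\,w^{J-3+t}>0$ for $w>0$ (leading coefficient $J(J-1)$, as you predicted), so $f$ is strictly convex on $(0,1)$ and strict Jensen yields $R_C>0$ whenever $p_A\neq p_B$.
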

\subsubsection*{Nonrandom periodic pattern strategy $D$:} An alternative concept entails the casino permitting the gambler to periodically switch between the two arms. The casino allows the gambler to pre-formulate a deterministic strategy that governs the frequency and timing of arm switching. Then, the gambler pulls the arm according to this strategy and repeats it indefinitely. The casino does not limit the number of times the gambler pulls the $A$ or $B$ arm or the order in which the arms are pulled; however, the strategy involves the pulling of $A$ and $B$ at least once. For example, in strategy $D=ABB$, the gambler will pull the $A$ arm once, then pull the $B$ arm twice, repeating that sequence indefinitely. Ethier and Lee defined this strategy as a (finite) nonrandom periodic pattern strategy $D$. We denote $r$ and $s$ as the numbers of $A$s and $B$s in strategy $D$, respectively. Ethier and Lee showed in \cite{ethier2010markovian} that if no restrictions are imposed on the slot machine, and gamblers are allowed to play according to any nonrandom periodic pattern strategy $D$, the casino cannot guarantee profitability in the long run. %They also pointed out that profitability is guaranteed if $J$ is a multiple of $r+s$; that is, the Parrondo effect is present.

Herein, we mathematically compute the casino's asymptotic expected profit $R_D$ for different nonrandom periodic pattern strategies $D$ for $J=2$ and $p_A\neq p_B$ based on the condition in Ethier and Lee's conjectural assumption. Then we can make a new cam and let $I=2(r+s)$. For example, if $D=ABB$, then the new cam configuration is $ABBABB$. That is, a simple two-armed slot machine under strategy $D$ can be regarded as a generalized one-armed slot machine with a specific cam. The value of $p^\circ_D$ has the following form, which has been given by Equation (\ref{eq1}).
\begin{align}\label{eq3}
p^\circ_D=\frac{1}{r+s}\sum\limits_{k=1}^{r+s}\bigg(\sum\limits_{j=1}^{r+s}p_j\prod\limits_{i=j+1}^{j+2k-(r+s)\lfloor2k/(r+s)\rfloor}q_i\bigg)\frac{(q^r_Aq^s_B)^{\lfloor2k/(r+s)\rfloor}}{1-(q^r_Aq^s_B)^2}.
\end{align}
Here each of $p_i$ for $i=1,2,\cdots,r+s$ to be $p_A$ or $p_B$ in accordance with the corresponding term in the pattern $D$, and we can extend the definition of $p_i$ such that $p_{i+r+s}=p_i$ for any $i\in\{1,2,\cdots,r+s\}$, and $q_i:=1-p_i$ for $i=1,2,\cdots,2(r+s)$.
\begin{theorem}[Ethier and Lee 2010]
If $p_A\neq p_B$, $J\ge2$, and $r,s\ge1$, and if $r+s$ divides $J$, then the Parrondo effect is present for the nonrandom periodic pattern strategy $D$.
\end{theorem}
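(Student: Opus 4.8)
The plan is to deduce $R_D>0$ from the convexity of $f$ that is already recorded in Theorem~\ref{EL}, the divisibility hypothesis being exactly what makes the Futurity‑award frequency $p^\circ_D$ collapse to a form controlled by a weighted geometric mean. First I would write down the casino's profit under strategy $D$. Since arms $A$ and $B$ are individually asymptotically fair, the base‑game expected profit collected on each $A$‑coup (resp.\ each $B$‑coup) must equal the Futurity payout $Jp^\circ_A=f(q_A)$ (resp.\ $Jp^\circ_B=f(q_B)$) of the corresponding one‑armed machine; the base payout distribution of an arm does not depend on what the gambler has done before, so averaging over one period of $D$ and subtracting the true Futurity payout $Jp^\circ_D$ gives
\[
R_D=\frac{r}{r+s}\,f(q_A)+\frac{s}{r+s}\,f(q_B)-Jp^\circ_D .
\]
Thus everything reduces to bounding $Jp^\circ_D$ above by this weighted average of $f(q_A)$ and $f(q_B)$.

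Next I would use $r+s\mid J$, writing $J=m(r+s)$ and viewing $D$ as a period‑$J$ cam (so that $I=J$ and $d=1$ in $(\ref{eq1})$, which is legitimate since $r+s\mid J=I$). The decisive observation is that \emph{any} $J$ consecutive entries of the periodic pattern contain exactly $mr$ $A$'s and $ms$ $B$'s; hence every inner product $\prod_{l=i-J+1}^{i}q_l$ in $(\ref{eq1})$, as well as $\prod_{j=0}^{J-1}q_j$, equals $w:=(q_A^{r}q_B^{s})^{m}$, independent of $i$, and the double sum collapses:
\[
p^\circ_D=\frac{w}{J(1-w)}\sum_{i=0}^{J-1}p_i=\frac{(r p_A+s p_B)\,w}{(r+s)(1-w)} .
\]
Introducing the weighted arithmetic means $\bar p:=\frac{r p_A+s p_B}{r+s}$ and $\bar q:=\frac{r q_A+s q_B}{r+s}=1-\bar p$, and the weighted geometric mean $g:=(q_A^{r}q_B^{s})^{1/(r+s)}$, one has $w=g^{J}$ and therefore $Jp^\circ_D=\dfrac{\bar p}{1-g}\,f(g)$.

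Then I would run a short chain of inequalities. By the weighted AM--GM inequality, $g\le\bar q$, so $1-g\ge 1-\bar q=\bar p$, whence $\bar p/(1-g)\le 1$ and, since $f\ge 0$, $Jp^\circ_D=\frac{\bar p}{1-g}f(g)\le f(g)$. Next, $f$ is increasing on $(0,1)$: indeed $J/f(q)=(1-q^{J})/((1-q)q^{J})=\sum_{k=1}^{J}q^{-k}$ is decreasing, so $f(g)\le f(\bar q)$. Finally, $f$ is convex (Theorem~\ref{EL}), so Jensen's inequality gives $f(\bar q)=f\!\big(\tfrac{r}{r+s}q_A+\tfrac{s}{r+s}q_B\big)\le\frac{r}{r+s}f(q_A)+\frac{s}{r+s}f(q_B)$. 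Chaining these three steps yields $Jp^\circ_D\le\frac{r}{r+s}f(q_A)+\frac{s}{r+s}f(q_B)$, i.e.\ $R_D\ge 0$; and when $p_A\ne p_B$ we have $q_A\ne q_B$, so the Jensen step is strict by strict convexity of $f$ (equivalently, the AM--GM step is already strict), giving $R_D>0$ and hence the Parrondo effect.

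The step I expect to be the main obstacle is the simplification of $p^\circ_D$: one must recognize that divisibility forces every length‑$J$ window of the pattern to have the same $q$‑product, and then — the less obvious part — that the resulting closed form is governed by the weighted geometric mean $g$, so that precisely two applications of AM--GM together with monotonicity of $f$ reduce the problem to the convexity already supplied by Theorem~\ref{EL}. Verifying that $f$ is increasing and that the chain is strict under $p_A\ne p_B$ is routine.
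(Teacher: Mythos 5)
Your argument is correct, and it is worth noting at the outset that the paper you are being compared against does not actually prove this statement: it is quoted from Ethier and Lee (2010) as a known result, and the paper's own machinery (the $R_D=2QS$ decomposition of Theorem~\ref{th2} and the combinatorial positivity of $Q$ in Appendix~C) is built specifically for the $J=2$ case and is never applied to the divisibility case. Your route is the natural one for $r+s\mid J$ and each step checks out: taking $I=J$ and $d=1$ in (\ref{eq1}) is legitimate because $I$ is then a common multiple of $J$ and the pattern period $r+s$; every length-$J$ window of the pattern does contain exactly $mr$ $A$'s and $ms$ $B$'s, so the double sum in (\ref{eq1}) collapses to $p^\circ_D=(rp_A+sp_B)w/((r+s)(1-w))$ with $w=g^J$; the identity $Jp^\circ_A=f(q_A)$ and the reduction of the Parrondo effect to $Jp^\circ_D<\frac{r}{r+s}f(q_A)+\frac{s}{r+s}f(q_B)$ agree with inequality (\ref{eq4}) and the general-$J$ analogue of (\ref{eq9}); and the chain $Jp^\circ_D\le f(g)\le f(\bar q)<\frac{r}{r+s}f(q_A)+\frac{s}{r+s}f(q_B)$ uses only weighted AM--GM, the monotonicity computation $J/f(q)=\sum_{k=1}^J q^{-k}$, and the strict Jensen inequality that is exactly the statement $R_C>0$ of Theorem~\ref{EL} with $\gamma=r/(r+s)$. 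The contrast with the paper is instructive: the divisibility hypothesis makes all cyclic $q$-products over a period equal, which is why everything reduces to the convexity of $f$; when $r+s\nmid J$ (e.g.\ the $J=2$ conjecture the paper actually proves) the windows see different sub-patterns, the collapse fails, and one is forced into the explicit $Q$-positivity argument of Appendices~A--C. Your proof buys a short, conceptual argument for the divisibility case but does not extend to the conjecture itself.
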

The conjecture of Ethier and Lee for the two-armed slot machine of this model is proved step by step in the following section. This conjecture was discussed on page 1098--1125 of Section 6 in \cite{ethier2010markovian}.

\begin{conjecture}[Ethier and Lee 2010] \label{cj1}
\textit{if the assumption that $r + s$ divides $J$ is replaced by the assumption that $J = 2$ (with $r \ge 1$ and $s \ge 1$), then the same conclusion holds.}
\end{conjecture}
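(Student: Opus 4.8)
My plan is to reduce Conjecture~\ref{cj1} to one explicit (if intricate) inequality in $q_A$ and $q_B$, and then to prove that inequality uniformly over all patterns. The Parrondo effect for strategy $D$ means $R_D>0$, where $R_D$ is the casino's asymptotic profit per coup attributable to the two-armed feature, with the two arms individually fair. The first step is to pin down $R_D$. Exactly as in the random-mixture case --- where $R_C=J\big(\gamma p^\circ_A+(1-\gamma)p^\circ_B-p^\circ_C\big)$ because each single pull is calibrated so that using one arm forever is fair --- the non-Futurity part of the casino's profit under $D$ contributes $\tfrac{r}{r+s}Jp^\circ_A+\tfrac{s}{r+s}Jp^\circ_B$ per coup while the Futurity rebate is $Jp^\circ_D$ per coup; this is what Section~\ref{sec5} establishes, giving, with $n:=r+s$,
\[
R_D=J\Big(\tfrac{r}{n}\,p^\circ_A+\tfrac{s}{n}\,p^\circ_B-p^\circ_D\Big).
\]
For $J=2$, equation~(\ref{eq2}) gives $p^\circ_A=q_A^2/(1+q_A)$ and $p^\circ_B=q_B^2/(1+q_B)$, so Conjecture~\ref{cj1} becomes the assertion that $rq_A^2/(1+q_A)+sq_B^2/(1+q_B)>n\,p^\circ_D$ for every $q_A,q_B\in(0,1)$ with $q_A\ne q_B$ and every pattern $D$ made of $r$ $A$'s and $s$ $B$'s.

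Next I would put $p^\circ_D$ into a transparent form. One may work from~(\ref{eq3}) with $I=2n$, collapsing the inner sums via $p_i\prod q_\ell=\prod q_\ell-q_i\prod q_\ell$; but since $J=2$ the pointer takes only the values $0$ and $1$, and it is cleaner to use this directly. Write $q_1,\dots,q_n$ for the cam sequence of $D$ ($r$ of the entries equal to $q_A$, $s$ to $q_B$), and let $\pi_j$ be the stationary probability that the pointer equals $1$ on entering cam position $j$; then $\pi_{j+1}=q_j(1-\pi_j)$ periodically in $j$, and $p^\circ_D=\tfrac1n\sum_{j=1}^n q_j\pi_j$. Put $g(q):=q^2/(1+q)=q-\tfrac{q}{1+q}$ and $\pi^{*}_j:=q_j/(1+q_j)$, the fixed point of $x\mapsto q_j(1-x)$ and hence the pointer probability of the constant-$q_j$ machine. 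From $g(q_j)=q_j-\pi^{*}_j$ and $q_j\pi_j=q_j-\pi_{j+1}$ one gets $\sum_j g(q_j)-n p^\circ_D=\sum_j\pi_j-\sum_j\pi^{*}_j$, and since $\sum_j g(q_j)=rp^\circ_A+sp^\circ_B$ the conjecture is equivalent to
\[
R_D=\frac{2}{n}\sum_{j=1}^{n}\big(\pi_j-\pi^{*}_j\big)>0,\qquad\text{i.e.}\qquad \sum_{j=1}^{n}\pi_j>\sum_{j=1}^{n}\frac{q_j}{1+q_j}.
\]
Solving the recursion periodically, $\pi_j=\big(1-(-1)^n u\big)^{-1}\sum_{m=1}^{n}(-1)^{m+1}\prod_{i=1}^{m}q_{j-i}$ with $u:=q_A^r q_B^s$, so $\sum_j\pi_j=\sum_{m\ge1}(-1)^{m+1}T_m$ (an absolutely convergent series) with $T_m:=\sum_{j=1}^{n}q_jq_{j+1}\cdots q_{j+m-1}$ the sum of all cyclic $m$-windows, while $\sum_j\pi^{*}_j=\sum_{m\ge1}(-1)^{m+1}\big(rq_A^m+sq_B^m\big)$. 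Every cam position lies in exactly $m$ of the $n$ cyclic $m$-windows, so the number of $A$'s in a length-$m$ window has mean $rm/n$; since $k\mapsto q_A^k q_B^{m-k}$ is convex on $[0,m]$ it lies below the chord joining $(0,q_B^m)$ to $(m,q_A^m)$, so averaging over the $n$ windows gives $T_m\le rq_A^m+sq_B^m$ for all $m$, with equality when $m=1$. Setting $\varepsilon_m:=rq_A^m+sq_B^m-T_m\ge0$ (so $\varepsilon_1=0$), the conjecture reduces to
\[
\sum_{m\ge2}(-1)^m\varepsilon_m=\varepsilon_2-\varepsilon_3+\varepsilon_4-\cdots>0.
\]

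The main obstacle is this last inequality, to be established uniformly in $D$: the sequence $(\varepsilon_m)_{m\ge2}$ need not be monotone --- already for $D=ABB$ one finds $\varepsilon_2<\varepsilon_3$ for suitable $q_A,q_B$ --- so one cannot merely pair consecutive terms, and $\varepsilon_2,\dots,\varepsilon_{n-1}$ genuinely depend on how $D$ is arranged, not only on $r$ and $s$. My plan is to show that $(\varepsilon_m)_{m\ge M}$ is non-increasing for a small explicit $M$ (of order $n$), control the tail $\sum_{m\ge M}(-1)^m\varepsilon_m$ by $\pm\varepsilon_M$, and bound the finite head $\sum_{m=2}^{M-1}(-1)^m\varepsilon_m$ from below by a quantitative form of the convex-order estimate; equivalently, after clearing the positive denominators one obtains
\[
R_D=\frac{2\,(q_A-q_B)^2\,Q_D(q_A,q_B)}{n\,(1+q_A)(1+q_B)\big(1-(-1)^n u\big)},
\]
and the content of the theorem is that the polynomial $Q_D$ is strictly positive on $(0,1)^2$ for every admissible $D$. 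I expect the argument to split into cases according to the parity of $n=r+s$ and the cyclic structure of $D$, the case $r=s=1$ recovering the already known instance $n\mid J$; Section~\ref{sec6} would illustrate the method on a single explicit pattern before the general argument is carried out in the appendices. The hypothesis $J=2$ is exactly what keeps the pointer chain two-state, so that the whole reduction comes down to finite --- if delicate --- algebra in $q_A$ and $q_B$.
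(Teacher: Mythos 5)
Your reduction is correct and, as far as it goes, quite clean: the identity $R_D=\tfrac{2}{n}\sum_j(\pi_j-\pi^*_j)$, the expansion $\sum_j\pi_j=\sum_{m\ge1}(-1)^{m+1}T_m$ in cyclic window sums, and the convex-order bound $T_m\le rq_A^m+sq_B^m$ (each length-$m$ window containing $k$ $A$'s contributes $q_A^kq_B^{m-k}$, which lies below the chord, and the $k$'s average to $rm/n$) all check out, and $\varepsilon_1=0$ is right. This is a genuinely different organization from the paper, which instead computes $R_D=2QS$ explicitly (Lemma~\ref{le4} for $A^rB^s$, then Lemma~\ref{le5} and induction over adjacent-segment swaps in Theorem~\ref{th2}) and then proves $Q>0$ by a combinatorial pairing of ``negative'' and ``positive'' product terms in Appendix~C.

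The gap is that you stop exactly where the conjecture begins. The termwise bound $\varepsilon_m\ge0$ carries no sign information for the alternating sum $\sum_{m\ge2}(-1)^m\varepsilon_m$, and you yourself note that $(\varepsilon_m)$ need not be monotone and that $\varepsilon_2,\dots,\varepsilon_{n-1}$ depend on the arrangement of $D$, not just on $r$ and $s$. Your proposed remedy --- eventual monotonicity of $(\varepsilon_m)_{m\ge M}$ for some $M$ of order $n$, a tail estimate by $\pm\varepsilon_M$, and a lower bound on the head $\sum_{m=2}^{M-1}(-1)^m\varepsilon_m$ by ``a quantitative form of the convex-order estimate'' --- is only a plan: no such quantitative head bound is exhibited, and nothing in the convexity argument distinguishes the even-indexed $\varepsilon_m$ from the odd-indexed ones, which is what the inequality requires. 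The paper's Appendix~C, which occupies most of its length, is precisely the machinery (the functions $\varphi,\psi$ on ``negative points'' and the one-to-one matching of negative polynomial routes with positive ones) needed to settle this sign question uniformly over all patterns; your proposal has no counterpart to it. Until the inequality $\varepsilon_2-\varepsilon_3+\varepsilon_4-\cdots>0$ is actually proved for every cyclic arrangement, the argument establishes only a reformulation of the conjecture, not the conjecture itself.
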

\begin{remark}
We continue to extend the definition of $p_i$ such that $p_{i+3r+3s}=p_{i+2r+2s}=p_i$ for any $i\in\{1,2,\cdots,r+s\}$, and $q_i:=1-p_i$ for $i=1,2,\cdots,4r+4s$. After we extend the definition, $p_i$ can also be regarded as the probability of winning at the $i$th coup, ignoring the Futurity award. According to the proof of Theorem 5 in \cite{ethier2010markovian}, the presence of the Parrondo effect is equivalent to
\begin{align}\label{eq4}
p^\circ_D<\frac{rp^\circ_A+sp^\circ_B}{r+s}.
\end{align}
\end{remark}
Comparing the proof of Ethier and Lee's theorem in Theorem \ref{EL}, it is established that the asymptotic profit per coup $R_C$ is a convex function.
Our work is to use the above formula to more accurately calculate the asymptotic profit per coup $R_D$ of the casino and reveal the reason why the casino must be profitable in the long run from a mathematical point of view. We will give the asymptotic profit per coup $R_D$ of the casino under the nonrandom periodic pattern strategy $D$ in subsequent sections.

\section{Main results}\label{sec3}

Herein, we study how the nonrandom periodic pattern strategy $D$ affects the casino's asymptotic profit per coup. Following the strategy provided by players, the casino first analyzes the total number of executions of arms $A$ and $B$ in the player's single-round strategy, denoted as $r$ and $s$, respectively. Further, we examine the casino's asymptotic profits per coup under different strategies after fixing $r$ and $s$.

We then present our primary findings. While proving the conjecture of Ethier and Lee, we found additional characteristics of the Futurity slot machine under the nonrandom periodic pattern strategy $D$. Certain findings are not restricted by the condition $J=2$, which is helpful for studying the proof of other conjectures of Ethier and Lee.

As a preliminary finding, we observed that the casino's asymptotic profits per coup are equal for some strategies available to players. Thus, the following equivalent strategy can always express any strategy the player adopts.

\begin{definition}\label{de1}
Let $r$ and $s$ be any pair of positive fixed values. Then, any nonrandom periodic pattern strategy $D$ which switches between arms $2h$ times in a single cycle can be writed as
$$D=\underbrace{A\cdots A}_{r_1}\underbrace{B\cdots B}_{s_1}\underbrace{A\cdots A}_{r_2}\underbrace{B\cdots B}_{s_2}\cdots \underbrace{A\cdots A}_{r_h}\underbrace{B\cdots B}_{s_h}:=A^{r_1}B^{s_1}A^{r_2}B^{s_2}\cdots A^{r_h}B^{s_h},$$
where $r_k$ and $s_k$ are positive integers for $1\le k\le h$ and $\sum\limits_{k=1}^{h}r_k=r$, $\sum\limits_{k=1}^{h}s_k=s$. and the vector
$$\boldsymbol{a}(h):=(a_1,a_2,...,a_{2h})=(r_1,s_1,r_2,s_2,...,r_h,s_h).$$
For any fixed values $l>0$, we consider the vector $\boldsymbol{a}(l)=(a_1,a_2,...,a_{2l})$. We extend the definitions of the elements in the vector. In particular, for $1\le i \le 2l$, we define the $i$th component of $\boldsymbol{a}(l)$ as ${a_i}(l)$ and
$${a_i}(l)={a_{i-2l}}(l)={a_{i+2l}}(l).$$

We respectively define a function ${b_i(l)}$, $i\in \mathbb{Z}$, $-2l+1 \le i \le 4l$, as \\

  \qquad${b_{2j-1}}(l):=(-1)^{a_{2j-1}}q_A^{a_{2j-1}}$, ${b_{2j}}(l):=(-1)^{a_{2j}}q_B^{a_{2j}}$ for $-l+1\le j\le 2l$.\\

In particular, if $l=h$, we abbreviate $a_i(h)$ and $b_i(h)$ as $a_i$ and $b_i$, respectively.

\end{definition}
\begin{remark}
Equation (\ref{eq3}) can be rewritten as follows.
\begin{align}\label{eq5}
p^\circ_D=\frac{1}{r+s}\sum\limits_{k=1}^{r+s}\bigg(\sum\limits_{j=1}^{r+s}p_j\prod\limits_{i=j+1}^{j+2k}q_i\bigg)\frac{1}{1-(q^r_Aq^s_B)^2}.
\end{align}

 Any strategy $D$ used by the player can always be represented by the form defined above. We can start from the coup that selects $A$ and turn clockwise to end at the coup that selects $B$; we take this path as the main strategy of our research. In this way, we can use the $r_k$(resp. $s_k$) to represent the number of times that $A$(resp. $B$) has been executed in the strategy segment.
 
This result may be relaxed with no restriction of $J=2$. The proof idea remains the same. It may be of great help to study other conditions of Ethier and Lee's conjecture, such as $\min(r,s)=1$.

For the convenience of our later research, we extend the definitions of the $r_k$ and the $s_k$ to ensure that the cycle continues. In the follow-up proof, we construct this new function $b^D_i$ so as to be the key parameter that affects the casino's asymptotic profits per coup across various strategies.

\end{remark}
The first result is that we accurately calculate the asymptotic profits per coup of the casino for each nonrandom periodic pattern strategy $D$ developed by the player in advance, which enables the casino to adjust its profit according to the following theorem.
\begin{theorem}\label{th2}
For any set of fixed values $h,r,s>0$, with nonrandom periodic pattern strategy $D$ as in Definition \ref{de1}, the casino's asymptotic profit per coup $R_D$ attributable to the two-armed feature of the machine is
$$R_D=2QS,$$
 where
 $$Q=h+\sum\limits_{m=1}^{2h}\sum\limits_{j=1}^{2h-1}(-1)^j\prod\limits_{i=m}^{m+j-1}{b_i}+h\prod\limits_{i=1}^{2h}{b_i},$$
 $$S=\frac{(q_A-q_B)^2(1+(-1)^{r+s}q_A^rq_B^s)}{(r+s)(1+q_A)^2(1+q_B)^2(1-(q_A^rq_B^s)^2)}.$$
\end{theorem}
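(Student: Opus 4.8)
The plan is to reduce Theorem~\ref{th2} to a rational-function identity in $q_A,q_B$ and then verify it by telescoping the sum defining $p^\circ_D$ and resumming along the runs of $D$. By the accounting used in the proof of Theorem~5 of~\cite{ethier2010markovian} (cf.\ the Remark after Conjecture~\ref{cj1}), the casino's per-coup profit attributable to the two-armed feature equals $J$ times the shortfall of the Futurity-award probability below the arm-average, so with $J=2$,
\[
R_D=2\Big(\tfrac{r}{r+s}\,p^\circ_A+\tfrac{s}{r+s}\,p^\circ_B-p^\circ_D\Big).
\]
Substituting $p^\circ_A=q_A^2/(1+q_A)$ and $p^\circ_B=q_B^2/(1+q_B)$ from~\eqref{eq2} turns $R_D=2QS$ into an algebraic identity. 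Since $p^\circ_D$ in~\eqref{eq5} is unchanged under a cyclic rotation of $(p_1,\dots,p_{r+s})$, I may assume $D=A^{r_1}B^{s_1}\cdots A^{r_h}B^{s_h}$ as in Definition~\ref{de1}; write $N:=r+s$ and $\Pi:=\prod_{i=1}^{2h}b_i$. A first observation is that $\Pi=(-1)^{r+s}q_A^rq_B^s$, so the numerator factor $1+(-1)^{r+s}q_A^rq_B^s=1+\Pi$ of $S$ cancels against $1+\Pi$ in $1-(q_A^rq_B^s)^2=(1-\Pi)(1+\Pi)$, leaving $S=(q_A-q_B)^2/[\,N(1+q_A)^2(1+q_B)^2(1-\Pi)\,]$.

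\textbf{A closed form for $p^\circ_D$.} Writing $p_j=1-q_j$ in~\eqref{eq5} makes each inner summand a telescoping difference of products of consecutive $q_i$; since $(q_i)$ is $N$-periodic with $r$ factors $q_A$ and $s$ factors $q_B$ per period, summing over a full period permits free index shifts, and the double sum collapses to $\sum_{n=2}^{2N+1}(-1)^n\Sigma(n)$, where $\Sigma(n):=\sum_{j=1}^{N}\prod_{i=j+1}^{j+n}q_i$ (so $\Sigma(0)=N$ and $\Sigma(1)=rq_A+sq_B$). The quasi-periodicity $\Sigma(n+N)=q_A^rq_B^s\,\Sigma(n)$ — conveniently packaged as $\sum_{n\ge0}\Sigma(n)x^n=\widetilde F(x)/(1-x^Nq_A^rq_B^s)$ with $\widetilde F(x):=\sum_{n=0}^{N-1}\Sigma(n)x^n$ — lets the factor $1-(q_A^rq_B^s)^2$ in~\eqref{eq5} cancel, and at $x=-1$ this gives $N\,p^\circ_D=\widetilde F(-1)/(1-\Pi)-N+\Sigma(1)$. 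Feeding this into the displayed formula for $R_D$ and using the one-line identity $r\,p^\circ_A+s\,p^\circ_B+N-\Sigma(1)=\tfrac{r}{1+q_A}+\tfrac{s}{1+q_B}$, the theorem is reduced to
\[
(1+q_A)^2(1+q_B)^2\Big[(1-\Pi)\Big(\tfrac{r}{1+q_A}+\tfrac{s}{1+q_B}\Big)-\widetilde F(-1)\Big]=Q\,(q_A-q_B)^2.
\]

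\textbf{Resumming along the runs; the hard part.} What remains is to evaluate $\widetilde F(-1)=\sum_{n=0}^{N-1}(-1)^n\Sigma(n)$ in terms of the $b_i$. I would decompose each window of length $n<N$ according to the $2h$ runs of $D$: a window exactly spanning a block of consecutive runs contributes the matching product of $b_i$ (a full $A$-run of length $r_k$ contributes $(-q_A)^{r_k}=b_{2k-1}$, a full $B$-run of length $s_k$ contributes $(-q_B)^{s_k}=b_{2k}$), while the two partial windows at the ends contribute geometric factors of the form $(1-b_i)/(1+q_A)$ or $(1-b_i)/(1+q_B)$. The cleanest way to organize the sum over all starting positions is the cyclic recursion $G_p=1-q_pG_{p+1}$ for the tail sums $G_p:=\sum_{n\ge0}(-1)^n\prod_{i=p}^{p+n-1}q_i$ (so $\widetilde F(-1)=(1-\Pi)\sum_{p=1}^N G_p$), solved over the cyclic chain of $2h$ runs and then summed over the offsets within each run; after clearing $(1+q_A)^2(1+q_B)^2$ this should reproduce exactly the three parts of $Q$: the constant $h$, the full-product term $h\prod_{i=1}^{2h}b_i$, and the nested sum $\sum_{m=1}^{2h}\sum_{j=1}^{2h-1}(-1)^j\prod_{i=m}^{m+j-1}b_i$. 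I expect this last step to be both the crux and the main source of length — the bookkeeping for the two partial boundary windows, for the cyclic wrap-around, and for the cancellations that compress the solved recursion into the compact expression $Q$ is intricate, which is why the full computation is deferred to the appendices; by contrast the telescoping and quasi-periodicity steps, though also lengthy, are routine. A good consistency check along the way is the case $h=1$, $D=A^rB^s$, where the recursion yields $Q=(1-b_1)(1-b_2)$ directly.
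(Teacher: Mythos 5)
Your route is genuinely different from the paper's. The paper never evaluates $p^\circ_D$ in closed form for general $h$: after the reduction of Lemma~\ref{le3} (which your telescoping and quasi-periodicity steps reproduce exactly, including $N p^\circ_D=\widetilde F(-1)/(1-\Pi)-N+\Sigma(1)$ and the cancellation of $1+\Pi$ against $1-(q_A^rq_B^s)^2$), it computes $R_D$ only for $h=1$ by a window-counting matrix (Lemma~\ref{le4}), then proves a swap lemma (Lemma~\ref{le5}) for the difference $R_D-R_{D'}$ when two adjacent runs are exchanged, and obtains Theorem~\ref{th2} by induction on the number of merges reducing $D$ to $A^rB^s$. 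Your plan replaces Lemmas~\ref{le4}--\ref{le5} and the induction by a single direct evaluation of $\sum_{p=1}^N G_p$ via the cyclic recursion $G_p=1-q_pG_{p+1}$. This is viable and arguably cleaner for the formula itself; what it does not buy you is the swap lemma, which the paper reuses heavily in Appendix~C to prove positivity of $Q$.

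The one step you must not leave at ``should reproduce'' is the final identity, which in your proposal is asserted rather than derived; as written this is a correct skeleton, not a complete proof. It does close along the lines you indicate. Solving the recursion run by run gives the value $H_m$ of $G$ at the start of the $m$th run, and Abel summation on $H_m=\frac{1}{1-\Pi}\sum_{j=m}^{m+2h-1}\bigl(\prod_{i=m}^{j-1}b_i-\prod_{i=m}^{j}b_i\bigr)u_j$ (with $u_j=1/(1+q_A)$ for $j$ odd, $1/(1+q_B)$ for $j$ even) yields $H_m-u_{m-1}=\frac{(-1)^m(q_A-q_B)}{(1-\Pi)(1+q_A)(1+q_B)}T_m$ where $T_m:=\sum_{l=0}^{2h-1}(-1)^l\prod_{i=m}^{m+l-1}b_i$. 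Summing the offsets within each run gives $\frac{r}{1+q_A}+\frac{s}{1+q_B}-\sum_pG_p=\sum_k\bigl(H_{2k}-\tfrac{1}{1+q_A}\bigr)\tfrac{q_A(1-b_{2k-1})}{1+q_A}+\sum_k\bigl(H_{2k+1}-\tfrac{1}{1+q_B}\bigr)\tfrac{q_B(1-b_{2k})}{1+q_B}$, and the punchline is the combinatorial identity $\sum_{k=1}^hT_{2k}(1-b_{2k-1})=\sum_{k=1}^hT_{2k+1}(1-b_{2k})=Q$ (a reindexing of the double sum in $Q$, with the boundary terms producing $h$ and $h\prod_{i=1}^{2h}b_i$) combined with $q_A(1+q_B)-q_B(1+q_A)=q_A-q_B$. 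Supply that computation and your argument is a complete, and in my view shorter, alternative proof of Theorem~\ref{th2}; your $h=1$ check is consistent with both it and Lemma~\ref{le4}.
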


  \begin{remark}
The main proof of the theorem is in Section \ref{sec5} and the Appendix. For the simplest strategy $D$ with the least number of player arm changes, that is, $h=1$ and $D=A\cdots AB\cdots B$, we can directly calculate the casino's asymptotic profit per coup (Lemma \ref{le4}). Next, we can exchange part of the strategy segment $A$ and strategy segment $B$ to generate a new strategy and then calculate the difference between the asymptotic profit per coup of the casino obtained by adopting the new strategy and original strategy (Lemma \ref{le5}). Finally, any nonrandom periodic pattern strategy can be obtained from the simplest strategy by continuously exchanging strategy segment $A$ and strategy segment $B$. Thus, Lemma \ref{le5} and induction are used to obtain the form of the theorem. The specific proof is provided in the Appendix B.

  Notably, the casino's asymptotic profit per coup consists of the product of three parts: $2$, $Q$, and $S$. The first part, ``$2$", indicates that the casino must pay the player two coins for every two consecutive player losses. The second part, the function $Q$, represents the effect of arm playing order on the profit of the casino, with the player deciding to play $A$ $r$ times and then $B$ $s$ times. The third part, the function $S$, represents the influence of the internal parameters of the slot machine on the profit of the casino, given the values of $r$ and $s$ in the player's strategy.

After fixing the number $r$ of playing arm $A$ and the number $s$ of playing arm $B$, we conjecture that more frequent arm switches result in higher asymptotic profits per coup of the casino. We do not have a proof, and this seems to be an open problem.

  \end{remark}

The original conjecture of Ethier and Lee can be regarded as the following theorem.
\begin{theorem}\label{th3}
In the case where a player adopts a nonrandom periodic pattern strategy $D$ while playing
the two-armed version of the Futurity slot machine, the
casino's asymptotic profit per coup attributable to the two-armed feature of the
machine, $R_D$, is positive. This implies that the two-armed machine is more profitable than
two one-armed machines under the same strategy, and all nonrandom periodic pattern strategies $D$ are susceptible to the Parrondo effect.
\end{theorem}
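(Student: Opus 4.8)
The plan is to exploit Theorem~\ref{th2}. Since $R_D=2QS$, and since by the Remark after Conjecture~\ref{cj1} the Parrondo effect is present precisely when (\ref{eq4}) holds, that is, when $R_D>0$, it suffices to prove $S>0$ and $Q>0$. The factor $S$ is disposed of immediately: with $x:=q_A^rq_B^s$ we have $0<x<1$ because $0<q_A,q_B<1$ and $r,s\ge1$; hence $1-x^2>0$, and moreover $(1+q_A)^2(1+q_B)^2>0$, $r+s>0$, $1+(-1)^{r+s}x\ge 1-x>0$, and $(q_A-q_B)^2>0$ since $p_A\ne p_B$. Thus $S>0$, and the whole theorem reduces to the single inequality $Q>0$.

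\textbf{A reformulation of $Q$.} Each $b_i$ equals $(-1)^{a_i}q_A^{a_i}$ (for $i$ odd) or $(-1)^{a_i}q_B^{a_i}$ (for $i$ even), so $|b_i|\in(0,1)$; put $P:=\prod_{i=1}^{2h}b_i$, so that $|P|=q_A^rq_B^s<1$ and $1-P>0$ (note $P=(-1)^{r+s}q_A^rq_B^s$ depends only on $r,s,q_A,q_B$, not on the arrangement). With indices read modulo $2h$, define $G_m:=\sum_{j\ge0}\prod_{i=m}^{m+j-1}(-b_i)$ for $1\le m\le 2h$, a convergent series. Directly from this definition $G_m=1-b_mG_{m+1}$, and splitting the summation index $j$ into a multiple of $2h$ plus a remainder gives $G_m=Q_m/(1-P)$, where $Q_m:=\sum_{j=0}^{2h-1}(-1)^j\prod_{i=m}^{m+j-1}b_i$ (empty product $=1$). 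Regrouping the double sum in the formula for $Q$ in Theorem~\ref{th2} then yields the identity $Q=(1-P)\bigl(\sum_{m=1}^{2h}G_m-h\bigr)$. Since $1-P>0$, it remains to prove $\sum_{m=1}^{2h}G_m>h$, that is, that the one-period average of the $G_m$ exceeds $\tfrac12$.

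\textbf{Proving $\sum_mG_m>h$.} I would argue by induction on the number of blocks $h$. For $h=1$ the pattern is $A^rB^s$, and a direct computation gives $Q=(1-b_1)(1-b_2)=\bigl(1-(-1)^rq_A^r\bigr)\bigl(1-(-1)^sq_B^s\bigr)$, a product of two positive numbers (this is Lemma~\ref{le4}). For the inductive step I would use the exchange lemma, Lemma~\ref{le5}: every $h$-block pattern is reached from $A^rB^s$ by a finite chain of swaps of an adjacent $A$-run and $B$-run, and Lemma~\ref{le5} records exactly how $R_D$, equivalently $Q$ (equivalently $\sum_mG_m-h$, since $1-P$ is unchanged by a swap), varies under one such swap; this lets the sign be propagated along the chain. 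One should not expect to shortcut this by showing each $G_m>0$: the individual $G_m$ may be negative (already for $h=2$, for example with $b_1$ near $1$ and $b_2,b_3$ near $-1$), so the positivity of $Q$ genuinely rests on cancellation among terms --- for $h=2$ this is transparent in the identity $Q=\prod_{i=1}^{4}(1-b_i)+(1-b_1b_3)(1-b_2b_4)$, each factor $1-b_ib_{i+2}$ being positive because $|b_ib_{i+2}|<1$.

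\textbf{The main obstacle.} The hard part is exactly this sign control. A generic summand $\prod_{i=m}^{m+j-1}b_i$ carries the factor $(-1)^{a_m+a_{m+1}+\cdots+a_{m+j-1}}$, which oscillates with the parities of the run-lengths $a_1,\dots,a_{2h}$, so $Q$ is not a manifest sum of nonnegative quantities --- already for $h=3$ the natural closed form has the shape $3(1-b)(1-b^6)/(1+b)$, a genuine quotient rather than a bare sum of products of factors $1-(\text{stuff})$. Arranging the induction so that it survives every such sign reversal --- equivalently, establishing the period-average bound $\sum_{m=1}^{2h}G_m>h$ uniformly over all run-length vectors $\boldsymbol{a}(h)$ and all $q_A,q_B\in(0,1)$ --- is the delicate core of the argument, and the representative example worked out in Section~\ref{sec6} is there precisely to show how this bookkeeping is carried through.
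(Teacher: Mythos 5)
Your reduction to $Q>0$ is correct ($S>0$ is immediate from $p_A\neq p_B$ and $0<q_A^rq_B^s<1$), your identity $Q=(1-P)\bigl(\sum_{m=1}^{2h}G_m-h\bigr)$ checks out, and the $h=1,2$ factorizations are right. But the proof stops exactly where the theorem actually lives. Your proposed inductive step --- ``Lemma~\ref{le5} records how $Q$ varies under one swap; this lets the sign be propagated along the chain'' --- does not go through: the swap difference in Lemma~\ref{le5} carries the factor
\begin{equation*}
\sum_{j=0}^{2h-3}(-1)^{j}\Bigl(\prod_{i=0}^{j-1}b_i+\prod_{i=j}^{2h-3}b_i\Bigr),
\end{equation*}
which is \emph{not} of one sign in general (the $b_i$ alternate in sign with the parities of the run lengths $a_i$), so knowing $Q_{D'}>0$ for the pattern with fewer blocks does not yield $Q_D>0$. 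This is acknowledged explicitly in the remark following the example in Section~\ref{sec6}: the swap-difference method ``will fail in some special cases,'' which is precisely why the actual argument abandons it there and switches to a direct rearrangement of $Q$. You name this as ``the main obstacle'' but do not overcome it, so the statement $\sum_m G_m>h$ for arbitrary run-length vectors remains unproved in your write-up.

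For comparison, the paper's proof of $Q>0$ (Appendix C) is a lengthy combinatorial construction: it organizes the summands of $Q$ into iterative ``routes,'' classifies the indices where $b_k<0$ as negative points, and uses the pairing functions $\varphi$ and $\psi$ of Lemma~\ref{le6} (shown to define a one-to-one correspondence via Lemma~\ref{pr2}) to match every negative polynomial in the decomposition with a unique positive polynomial whose sum with it is positive, culminating in Lemma~\ref{le7}. None of that machinery, or a substitute for it, appears in your proposal. Your $G_m$ reformulation is a genuinely nice normalization not used in the paper and might be a useful starting point, but as it stands the proposal establishes the theorem only for $h\le 2$.
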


  \begin{remark}
The asymmetry $p_A\neq p_B$ leads to $S>0$, and in Section \ref{sec6}, we show that $Q>0$, that is, the Parrondo effect is in effect for all nonrandom periodic pattern strategies $D$.

The proof of the theorem is more complicated, equivalent to an arrangement problem. The form of $Q$ is the sum of product terms, that is $(-1)^j\prod\limits_{i=m}^{m+j-1}{b^D_i}$. We try to rearrange $Q$ and write $Q$ into the form of several sums of positive product terms. Further, we use a typical example in Section \ref{sec6} to demonstrate the unique arrangement universally we found. The rigorous proof of the theorem is provided in Appendix C.
  \end{remark}

\section{Calculation of the asymptotic profit per coup for the casino} \label{sec5}

We note that $p^\circ_A$ and $p^\circ_B$ can be directly calculated by Equation\ (\ref{eq2}). If we want to prove Inequality (\ref{eq4}), we only need to figure out how strategy $D$ affects the value of $p^\circ_D$. Ethier and Lee stated in \cite{ethier2010markovian} that $p^\circ_D$ has the form (\ref{eq3}), but this seems too abstract. In fact, the gameplay in strategy $D$ will repeat indefinitely, so for any strategy $D$ we can always find a strategy $D'$ starting from arm $A$ such that $p^\circ_{D'}=p^\circ_D$. This way we can simplify our analysis. In the first part we will show why casinos can honestly claim that both arms are fair, and make a preliminary calculation of the casino's asymptotic profit per coup. In the second part, we find a simple example to calculate the value of $R_D$. Then, for any strategy, we find a similar strategy but with fewer arm swaps before repeating the pattern, and we try to calculate the difference in the casino's asymptotic profit per coup between these two strategies. Finally, we can indirectly calculate the casino's asymptotic profit per coup for any strategy.

\subsection{Preliminary calculation of casino asymptotic profits per coup}
We build on Pyke's assumptions \cite{pyke2003random}, without loss of generality, assume that the casino will claim that both arms are fair. Before calculating the asymptotic expected profit of the casino, we must determine the parameter structure of the fair arm in the game. If a single arm is fair, for example, the $A$ arm, this means that the asymptotic payoff per coup $\mu_A^*$ of a player playing the $A$ arm each time is exactly 1 coin consumed per coup, specifically
$$\mu_A^*=p_Au_A+2p^\circ_A=1,$$
where $u_A$ is the conditional expected (non-Futurity) return from arm $A$, given a nonzero return. We use Equation (\ref{eq2}) mentioned by Ethier and Lee to obtain a one-dimensional quadratic equation with $p_A$ as the dependent variable. Solving this equation yields the parameter $u_A=\frac{3-2p_A}{2-p_A}$. In other words, we can adjust parameter $p_A$ and accordingly adjust the value of $u_A$ to always guarantee the fairness of this arm. The casino can also set the $B$ arm to be fair, but it needs to make $p_A\neq p_B$. Then the asymptotic profit of the casino is actually the 1 coin that the player spends before each coup minus the asymptotic payoff per coup $\mu^*_D$ that the player obtains when playing the game according to strategy $D$. With these parameters, we can make an initial calculation of the asymptotic profit per coup of the casino.
\begin{lemma}\label{le3}
For positive fixed integers $r$ and $s$ under nonrandom periodic pattern strategy $D$, the casino's asymptotic profit per coup $R_D$ attributable to the two-armed feature of the machine is
$$R_D=R_0-2Q_0S_0,$$
where
\begin{align}\label{eq6}
R_0=\frac{2}{r+s}\bigg(\frac{rq^2_A}{1+q_A}+\frac{sq^2_B}{1+q_B}+(-1)^{r+s}(r+s)(rq^{r+1}_Aq^s_B+sq^r_Aq^{s+1}_B)S_0\bigg),
\end{align}
\begin{align}\label{eq7}
Q_0=\sum_{j=1}^{r+s}\sum_{k=2}^{r+s}(-1)^k\prod\limits_{i=j}^{j+k-1}q_i,\qquad\qquad S_0=\frac{1+(-1)^{r+s}q^r_Aq^s_B}{(r+s)(1-(q^r_Aq^s_B)^2)}.
\end{align}
\end{lemma}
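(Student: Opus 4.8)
The plan is to first rewrite $R_D$ purely in terms of the Futurity-award probabilities $p^\circ_A$, $p^\circ_B$, $p^\circ_D$, and only then to carry out the algebra using the explicit formulas (\ref{eq2}) and (\ref{eq5}). For the first step I would invoke the strong law of large numbers of Ethier and Lee, so that the player's asymptotic payoff per coup $\mu^*_D$ under strategy $D$ exists and decomposes into a non-Futurity part and the Futurity part $2p^\circ_D$. Since the long-run fractions of $A$- and $B$-coups are $r/(r+s)$ and $s/(r+s)$, and the non-Futurity expectation on an $A$-coup (resp.\ $B$-coup) is $p_Au_A$ (resp.\ $p_Bu_B$), the fairness normalizations $p_Au_A+2p^\circ_A=1$ and $p_Bu_B+2p^\circ_B=1$ give $\mu^*_D = 1 - 2(rp^\circ_A+sp^\circ_B)/(r+s) + 2p^\circ_D$; as each arm is individually fair, all of the casino's asymptotic profit is attributable to the two-armed feature, so $R_D = 1-\mu^*_D = 2(rp^\circ_A+sp^\circ_B)/(r+s) - 2p^\circ_D$.

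Next I would substitute the closed forms. Setting $J=2$ in (\ref{eq2}) and using $p_A=1-q_A$ gives $p^\circ_A = q_A^2/(1+q_A)$ and $p^\circ_B = q_B^2/(1+q_B)$, which yields the first two summands of $R_0$. Writing $P:=q_A^rq_B^s$ and $T:=\sum_{k=1}^{r+s}\sum_{j=1}^{r+s}p_j\prod_{i=j+1}^{j+2k}q_i$ (with the $q_i$ periodically extended, so that (\ref{eq5}) reads $p^\circ_D = T/[(r+s)(1-P^2)]$), and recalling $S_0 = (1+(-1)^{r+s}P)/[(r+s)(1-P^2)]$, the whole lemma comes down to the single algebraic identity
$$T = \bigl(1+(-1)^{r+s}P\bigr)\Bigl(Q_0 - (-1)^{r+s}\bigl(rq_A^{r+1}q_B^s + sq_A^rq_B^{s+1}\bigr)\Bigr),$$
because this gives $2p^\circ_D = 2Q_0S_0 - 2(-1)^{r+s}(rq_A^{r+1}q_B^s+sq_A^rq_B^{s+1})S_0$, and combining with the formula for $R_D$ above and the definition of $R_0$ produces $R_D = R_0 - 2Q_0S_0$.

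To establish the identity I would set $\Pi(j,m):=\prod_{i=j+1}^{j+m}q_i$ and $\sigma_m:=\sum_{j=1}^{r+s}\Pi(j,m)$. From $p_j = 1-q_j$ and $q_j\Pi(j,m) = \Pi(j-1,m+1)$ one gets $p_j\Pi(j,m) = \Pi(j,m) - \Pi(j-1,m+1)$; summing over one period $j=1,\dots,r+s$ and reindexing with the periodicity of $q_i$ gives $\sum_{j}p_j\Pi(j,m) = \sigma_m - \sigma_{m+1}$, so that $T = \sum_{k=1}^{r+s}(\sigma_{2k}-\sigma_{2k+1}) = \sum_{m=2}^{2(r+s)+1}(-1)^m\sigma_m$ telescopes. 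A product over an extra full period contributes a factor $q_A^rq_B^s$, hence $\sigma_{m+(r+s)} = P\sigma_m$; folding the sum at $m=r+s$ (the boundary terms being $\sigma_1$ and $\sigma_{r+s+1}=P\sigma_1$) turns it into $T = (1+(-1)^{r+s}P)\bigl(\sum_{m=2}^{r+s}(-1)^m\sigma_m - (-1)^{r+s}P\sigma_1\bigr)$. Finally, $\sum_{m=2}^{r+s}(-1)^m\sigma_m = Q_0$ after reindexing $j\mapsto j-1$ (using periodicity again) and interchanging the two sums, while $\sigma_1 = \sum_{j=1}^{r+s}q_{j+1} = rq_A+sq_B$ because each period has exactly $r$ occurrences of $q_A$ and $s$ of $q_B$, so $P\sigma_1 = rq_A^{r+1}q_B^s + sq_A^rq_B^{s+1}$, which is precisely the claimed identity.

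The substitutions and the telescoping are routine; the step that needs the most care, and the main obstacle, is the index bookkeeping around the periodic extension: interpreting the products of length $2k>r+s$ in (\ref{eq5}) via the wraparound, performing the fold of $\sum_{m=2}^{2(r+s)+1}(-1)^m\sigma_m$ at $m=r+s$ with exactly the right boundary contributions, and matching $\sum_{m=2}^{r+s}(-1)^m\sigma_m$ with $Q_0$ after reindexing. A smaller preliminary point is to pin down carefully why the casino's entire asymptotic profit is "attributable to the two-armed feature," which is where the fairness of each single arm enters.
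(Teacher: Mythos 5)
Your proposal is correct and follows essentially the same route as the paper: the same reduction to $R_D=2\bigl[(rp^\circ_A+sp^\circ_B)/(r+s)-p^\circ_D\bigr]$ via the single-arm fairness normalizations, the same telescoping of $\sum_j p_j\prod q_i$ into an alternating sum of $\sigma_m$'s, and the same use of $\sigma_{m+(r+s)}=q_A^rq_B^s\,\sigma_m$ to fold the sum and isolate the boundary term $rq_A^{r+1}q_B^s+sq_A^rq_B^{s+1}$. The only differences are notational (your explicit $\sigma_m$ bookkeeping versus the paper's treatment of $k=r+s+1$ as a separate special case).
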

\begin{proof} For the law of large numbers, we know that
$$\mu_D=\frac{r}{r+s}p_Au_A+\frac{s}{r+s}p_Bu_B.$$
Then according to the above discussion to obtain
\begin{align}\label{eq9}
R_D=\frac{r}{r+s}\mu^*_A+\frac{s}{r+s}\mu^*_B-\mu_D-2p^\circ_D=2\bigg(\frac{r}{r+s}p^\circ_A+\frac{s}{r+s}p^\circ_B-p^\circ_D\bigg).
\end{align}
 where $\mu_D$ is the asymptotic mean payout per coup, disregarding the Futurity award. Recall that we computed $p^\circ_D$, which has the form (\ref{eq5}), and our main concern was its internal summation, that is,
\begin{align*}
\sum\limits_{k=1}^{r+s}\bigg(\sum\limits_{j=1}^{r+s}p_j\prod\limits_{i=j+1}^{j+2k}q_i\bigg)&=\sum\limits_{k=1}^{r+s}\sum\limits_{j=1}^{r+s}\prod\limits_{i=j+1}^{j+2k}q_i-\sum\limits_{k=1}^{r+s}\sum\limits_{j=1}^{r+s}\prod\limits_{i=j}^{j+2k}q_i\\
&=\sum\limits_{k=1}^{r+s}\sum\limits_{j=2}^{r+s}\prod\limits_{i=j}^{j+2k-1}q_i+\sum\limits_{k=1}^{r+s}\prod\limits_{i=r+s+1}^{r+s+2k}q_i-\sum\limits_{k=1}^{r+s}\sum\limits_{j=1}^{r+s}\prod\limits_{i=j}^{j+2k}q_i\\
&=\sum\limits_{k=1}^{r+s}\sum\limits_{j=1}^{r+s}\prod\limits_{i=j}^{j+2k-1}q_i-\sum\limits_{k=1}^{r+s}\sum\limits_{j=1}^{r+s}\prod\limits_{i=j}^{j+2k}q_i\\
&=\sum\limits_{k=2}^{2(r+s)+1}\sum\limits_{j=1}^{r+s}(-1)^k\prod\limits_{i=j}^{j+k-1}q_i.
\end{align*}
We observe that some $k$ values in the parameter $k$ in the summation formula have similar product structures, and for some special $k$ values, we can directly calculate the result of the summation term regardless of the structure of strategy $D$.
If $2\le k\le r+s+1$, we consider $k'=k+r+s$. Then we have
$$\sum\limits_{j=1}^{r+s}(-1)^{k'}\prod\limits_{i=j}^{j+k'-1}q_i=(-1)^{r+s}q^r_Aq^s_B\bigg(\sum\limits_{j=1}^{r+s}(-1)^{k}\prod\limits_{i=j}^{j+k-1}q_i\bigg).$$
In particular, if $k=r+s+1$, then we have
\begin{align*}
\sum\limits_{j=1}^{r+s}(-1)^{r+s+1}\prod\limits_{i=j}^{j+r+s}q_i&=\bigg(\sum\limits_{j=1}^{r+s}q_j\bigg)(-1)^{r+s+1}\prod\limits_{i=j+1}^{j+r+s}q_i\\
&=(-1)^{r+s+1}(rq^{r+1}_Aq^s_B+sq^r_Aq^{s+1}_B).
\end{align*}
Then, the equation below (\ref{eq9}) can be simplified to
\begin{align*}
\sum\limits_{k=1}^{r+s}\bigg(\sum\limits_{j=1}^{r+s}p_j\prod\limits_{i=j+1}^{j+2k}q_i\bigg)&=(1+(-1)^{r+s}q^r_Aq^s_B)\bigg(\sum\limits_{k=2}^{r+s}\sum\limits_{j=1}^{r+s}(-1)^{k}\prod\limits_{i=j}^{j+k-1}q_i\bigg)\\
&\qquad{}+((-1)^{r+s+1}-q^r_Aq^s_B)(rq^{r+1}_Aq^s_B+sq^r_Aq^{s+1}_B).\\
\end{align*}
Now we substitute the above equation and equation (\ref{eq2}) into equation (\ref{eq9}) to obtain
$$R_D=R_0-2Q_0S_0,$$
where $R_0$ is equation (\ref{eq6}) and $Q_0$, $S_0$ are equations (\ref{eq7}). 
\end{proof}

\begin{remark}
In the process of calculating $R_D$, we only pay attention to the changes in the value of $Q_0$. We need to consider the probability product of all adjacent coup failures from the first step of a single strategy to the last step of the strategy, where the number of product terms is from $2$ to $r+s$, and if the number of product terms is even, it is positive, and if the number of product terms is odd, it is negative. Although we have reduced the number of product terms from $2(r+s)+1$ to $r+s$, we still cannot directly find the relationship between strategy $D$ and the value of $R_D$. In the following section, we use the lemma to directly calculate the asymptotic profit per coup of the strategy $D=A\cdots AB\cdots B$ that involves swapping arms once before repeating the pattern.
\end{remark}

\subsection{Asymptotic profit per coup for the casino under a simple strategy}A very natural strategy that players can think of is $D=AA\cdots ABB\cdots B$, which happens to be calculated directly using Lemma \ref{le3}. Let's first look at the simplest case, which is $D=AB$; that is $r,s=1$, then we have
$$Q_0=\sum_{j=1}^{2}\prod\limits_{i=j}^{j+1}q_i=2q_Aq_B.$$
Further, if the player applies strategy $D=A^rB$, that is, $r>1$, $s=1$, and by equation ($\ref{eq7}$), then we have
\begin{align*}
Q_0&=\sum_{k=2}^{r+1}(-1)^k\bigg(\sum_{j=1}^{r-k+1}\prod\limits_{i=j}^{j+k-1}q_i+\sum_{j=r-k+2}^{r+1}\prod\limits_{i=j}^{j+k-1}q_i\bigg)\\
&=\sum_{k=2}^{r+1}(-1)^k((r-k+1)q^k_A+kq^{k-1}_Aq_B).
\end{align*}
Similarly, the case $r=1$, $s>1$ leads to
$$Q_0=\sum_{k=2}^{1+s}(-1)^k(kq_Aq^{k-1}_B+(s-k+1)q^k_B).$$
Finally, if the player applies strategy $D=A^rB^s$, that is, $r,s>1$. We observe that $k$ on the left side of equation (\ref{eq7}) represents the number of factors in the product term, which we call the ``length" of the product term. We start from the first step of the strategy and observe the number of factor $q_A$ and factor $q_B$ in the product term of ``length" $k$, and then we continue with the second step according to the rules just mentioned until the $r+s$ step. We record the number of $i$ $q_A$ factors and $j$ $q_B$ factors contained in the product term of ``length" $i+j$, and denote it as $M(i,j)$. Then, we naturally get the following equation.
\begin{align*}
Q_0:=\sum_{i=0}^{r}\sum_{j=0}^{s}(-1)^{i+j}M(i,j)q^i_Aq^j_B.
\end{align*}
 For our simple example, we only need to consider the following case.
\begin{enumerate}
\item If $i=0$, $j>1$, then $M(0,j)=s-j+1$, and similarly if $j=0$, $i>1$, we have $M(i,0)=r-i+1$.
\begin{align*}
A\cdots AB\cdots B\underbrace{B\cdots B}_jB\cdots B,\qquad\qquad\qquad A\cdots A\underbrace{A\cdots A}_iA\cdots AB\cdots B.
\end{align*}
\item If $0<i<r$ and $0<j<s$, then $M(i,j)=2$, and the two cases are as follows.
$$A\cdots A\rlap{$\overbrace{\phantom{AAAAAA}}^k$}\underbrace{A\cdots A}_i  \underbrace{B\cdots B}_jB\cdots B\rlap{$\overbrace{\phantom{AAAAAA}}^k$}\underbrace{B\cdots B}_j\underbrace{A\cdots A}_iA\cdots A.$$
\item If $i=r$, $j<s$, then $M(r,j)=j+1$, and similarly if $j=s$, $i<r$, we have $M(i,s)=i+1$.
\begin{align*}
&A\cdots AB\cdots B\overbrace{B\cdots B\underbrace{A\cdots A}_rB\cdots B}^{r+j}B\cdots B,\\
&A\cdots A\overbrace{A\cdots A\underbrace{B\cdots B}_sA\cdots A}^{i+s}A\cdots A.
\end{align*}
\end{enumerate}
For a more intuitive representation, we use a matrix $\boldsymbol{M}$ to represent the count parameter $M(i,j)$.
$$
\boldsymbol{M}=
\begin{bmatrix}
  0 & 0 & s-1 & \cdots & 2 & 1\\
  0 & 2 & 2 & \cdots & 2 & 2 \\
  r-1 & 2  & 2 & \cdots & 2& 3  \\
  \vdots& \vdots  & \vdots  &  \ddots & \vdots & \vdots \\
  2 & 2 & 2 & \cdots & 2 & r \\
  1 & 2 & 3 & \cdots & s & r+s
\end{bmatrix}
=(M(i,j))_{(r+1)\times(s+1)}.
$$
Based on this typology, we can calculate the casino's asymptotic profit per coup under a player strategy of swapping arms only twice before repeating the pattern. We then have the following lemma.

\begin{lemma}\label{le4}
For any pair of fixed values $r,s>0$, under a nonrandom periodic pattern player strategy $D$, where $D=A^rB^s$, then the casino's asymptotic profit per coup $R_D$ attributable to the two-armed feature of the machine is
$$R_D=2Q_1S,$$
where 
\begin{align}\label{Q_1}
Q_1=(1-(-1)^rq_A^r)(1-(-1)^sq_B^s)
\end{align}
 and
 \begin{align}\label{S}
  S=\frac{(q_A-q_B)^2(1+(-1)^{r+s}q_A^rq_B^s)}{(r+s)(1+q_A)^2(1+q_B)^2(1-(q_A^rq_B^s)^2)}.
  \end{align}
\end{lemma}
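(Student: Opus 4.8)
\section*{Proof proposal for Lemma~\ref{le4}}

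The plan is to start from Lemma~\ref{le3}, which already gives $R_D=R_0-2Q_0S_0$; the whole statement then reduces to evaluating $Q_0$ in closed form for the pattern $D=A^rB^s$ and verifying a finite algebraic identity. It is convenient to abbreviate $\sigma_A:=(-1)^rq_A^r$ and $\sigma_B:=(-1)^sq_B^s$, so that $\sigma_A\sigma_B=(-1)^{r+s}q_A^rq_B^s$ and $(q_A^rq_B^s)^2=(\sigma_A\sigma_B)^2$, whence $Q_1=(1-\sigma_A)(1-\sigma_B)$. With this notation the quantities from Lemma~\ref{le3} and Lemma~\ref{le4} simplify first: one checks directly from $(\ref{eq7})$ that
$$S_0=\frac{1+\sigma_A\sigma_B}{(r+s)\big(1-(\sigma_A\sigma_B)^2\big)}=\frac{1}{(r+s)(1-\sigma_A\sigma_B)},\qquad S=\frac{(q_A-q_B)^2}{(r+s)(1+q_A)^2(1+q_B)^2(1-\sigma_A\sigma_B)},$$
and that the Futurity term in $(\ref{eq6})$ equals $\sigma_A\sigma_B(rq_A+sq_B)/(1-\sigma_A\sigma_B)$. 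So everything comes down to computing $Q_0$.

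For that I would use the second expression in $(\ref{eq7})$, $Q_0=\sum_{j=1}^{r+s}\sum_{k=2}^{r+s}(-1)^k\prod_{i=j}^{j+k-1}q_i$, with the periodic labelling $q_1=\cdots=q_r=q_A$, $q_{r+1}=\cdots=q_{r+s}=q_B$ (equivalently one may sum the matrix $\boldsymbol{M}$ against the weights $(-1)^{i+j}q_A^iq_B^j$, but the following device involves less bookkeeping). Introduce the alternating tail sums $T_j:=\sum_{k=0}^{r+s}(-1)^k\prod_{i=j}^{j+k-1}q_i$, so that $Q_0=\sum_{j=1}^{r+s}(T_j-1+q_j)=\sum_{j=1}^{r+s}T_j-(r+s)+(rq_A+sq_B)$. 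Factoring the leading $q_j$ out of each product and using $\prod_{i=j+1}^{j+r+s}q_i=q_A^rq_B^s$ together with the periodicity $T_{j+r+s}=T_j$ yields the one-step recursion $T_j=1+(-1)^{r+s}q_jq_A^rq_B^s-q_jT_{j+1}$. Since $q_j$ is constant on each of the blocks $1\le j\le r$ and $r+1\le j\le r+s$, this recursion has constant coefficient on each block; solving it by a finite geometric sum and matching the two blocks through $T_{r+s+1}=T_1$ determines all the $T_j$. Besides $\sum_jT_j$, the only combination needed is $T_1-T_{r+1}$, which — using $c_A(1+q_B)-c_B(1+q_A)=(q_B-q_A)(1-\sigma_A\sigma_B)$ with $c_A:=1+(-1)^{r+s}q_A^{r+1}q_B^s$, $c_B:=1+(-1)^{r+s}q_A^rq_B^{s+1}$ — simplifies neatly to
$$T_1-T_{r+1}=\frac{(1-\sigma_A)(1-\sigma_B)(q_B-q_A)}{(1+q_A)(1+q_B)}=\frac{Q_1(q_B-q_A)}{(1+q_A)(1+q_B)}.$$
Summing the recursion over a full period and applying elementary simplifications of the type $\tfrac{1}{1+q}-1+q=\tfrac{q^2}{1+q}$ then gives
$$Q_0=\frac{rq_A^2}{1+q_A}+\frac{sq_B^2}{1+q_B}+(-1)^{r+s}\Big(\frac{rq_A^{r+1}q_B^s}{1+q_A}+\frac{sq_A^rq_B^{s+1}}{1+q_B}\Big)-\frac{Q_1(q_A-q_B)^2}{(1+q_A)^2(1+q_B)^2}.$$

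Finally I would substitute this $Q_0$, along with the simplified $R_0$ and $S_0$, into $R_D=R_0-2Q_0S_0$, factor out $2/(r+s)$, and put the rest over the common denominator $1-\sigma_A\sigma_B$. Using $(-1)^{r+s}q_A^{r+1}q_B^s=\sigma_A\sigma_Bq_A$ and $(-1)^{r+s}q_A^rq_B^{s+1}=\sigma_A\sigma_Bq_B$, the part of the resulting numerator that does not carry the factor $Q_1$ is
$$-\sigma_A\sigma_B\Big[\frac{rq_A^2}{1+q_A}+\frac{sq_B^2}{1+q_B}+\frac{rq_A}{1+q_A}+\frac{sq_B}{1+q_B}-(rq_A+sq_B)\Big]=-\sigma_A\sigma_B\Big[\frac{rq_A(q_A+1)}{1+q_A}+\frac{sq_B(q_B+1)}{1+q_B}-(rq_A+sq_B)\Big]=0,$$
and what survives is exactly $R_D=\dfrac{2}{r+s}\cdot\dfrac{1}{1-\sigma_A\sigma_B}\cdot\dfrac{Q_1(q_A-q_B)^2}{(1+q_A)^2(1+q_B)^2}=2Q_1S$, the claimed formula. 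I expect the main obstacle to be the middle step — solving the block-wise recursion for the $T_j$ (or, equivalently, summing $\boldsymbol{M}$ against its weights) carefully enough that the factor $Q_1=(1-\sigma_A)(1-\sigma_B)$ emerges cleanly from $T_1-T_{r+1}$ while everything else cancels against $R_0$. The degenerate cases $r=1$ or $s=1$, where $\boldsymbol{M}$ has fewer interior entries, give the same closed form, which can be confirmed either by checking that the general argument still applies or directly from the explicit expressions for $Q_0$ recorded just before the lemma.
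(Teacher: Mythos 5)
Your proposal is correct, and I have checked its key intermediate claims: the recursion $T_j=1+(-1)^{r+s}q_jq_A^rq_B^s-q_jT_{j+1}$ does follow from factoring the leading $q_j$ out of the tail sum and using periodicity; solving it blockwise gives $(1+q_A)\Sigma_A=rc_A+q_A(T_1-T_{r+1})$ and $(1+q_B)\Sigma_B=sc_B-q_B(T_1-T_{r+1})$, so that, as you say, the only extra combination needed is $T_1-T_{r+1}$; your closed forms for $T_1-T_{r+1}$ and for $Q_0$ check out (e.g.\ they reproduce $Q_0=2q_Aq_B$ for $D=AB$ and $Q_0=q_A^2+2q_Aq_B-3q_A^2q_B$ for $D=AAB$); and the final cancellation against $R_0$ and $S_0$ is exactly as you compute. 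However, your route is genuinely different from the paper's. The paper also starts from Lemma \ref{le3} but then evaluates $Q_0$ by explicit combinatorial bookkeeping: it enumerates the count matrix $\boldsymbol{M}=(M(i,j))$ of how many length-$(i+j)$ windows of the cyclic word $A^rB^s$ contain $i$ letters $A$ and $j$ letters $B$ (requiring a case analysis over boundary, interior, and full-block windows), then pads $\boldsymbol{M}$ to $\boldsymbol{M_1}$, multiplies by $(1+q_A)(1+q_B)$ to reach $\boldsymbol{M_2}$, and finally peels off the checkerboard matrix $\boldsymbol{M_3}$ carrying the factor $(q_A-q_B)^2$. Your tail-sum recursion replaces all of that window counting with a two-block constant-coefficient linear recursion solved by a finite geometric sum, which is where the factor $Q_1=(1-(-1)^rq_A^r)(1-(-1)^sq_B^s)$ emerges (from the two ratios $(-q_A)^r$ and $(-q_B)^s$) rather than from a matrix factorization. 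What each approach buys: the paper's matrix picture is what generalizes to its later arguments (the count matrix reappears conceptually when comparing strategies in Lemma \ref{le5}), while your recursion is shorter, avoids the separate treatment of $r=1$ or $s=1$ (the geometric sums degenerate gracefully), and makes the provenance of each factor in $Q_1$ and $S$ transparent. To turn the proposal into a full proof you would only need to write out the two block sums $\Sigma_A$, $\Sigma_B$ explicitly as above; no idea is missing.
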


\begin{proof} Based on the above discussion, if $r=1$ or $s=1$, then by Lemma \ref{le3} our conclusion is trivial.
For fixed $r,s>1$, we note that the upper left element of the count matrix is 0. For the convenience of subsequent calculations, we add some elements to make certain elements of this matrix correspond to each other, namely,
\begin{align}\label{eq10}
 Q_0&=\sum_{i=0}^{r}\sum_{j=0}^{s}(-1)^{i+j}M(i,j)q^i_Aq^j_B \nonumber\\
&:=\sum_{i=0}^{r}\sum_{j=0}^{s}M_1(i,j)q^i_Aq^j_B-(r+s)+rq_A+sq_B,
\end{align}
where $\boldsymbol{M_1}=(M_1(i,j))_{(r+1)\times(s+1)}$, i.e.,
$$
\boldsymbol{M_1}=
\begin{bmatrix}
  r+s & -s & s-1 & \cdots & (-1)^{s-1}\times2 & (-1)^{s}\\
  -r & 2 & -2 & \cdots & (-1)^{s}\times2 & (-1)^{s+1}\times2 \\
  r-1 & -2  & 2 & \cdots & (-1)^{s+1}\times2& (-1)^{s+2}\times3  \\
  \vdots& \vdots  & \vdots  &   & \vdots & \vdots \\
  (-1)^{r-1}\times2 & (-1)^{r}\times2 & (-1)^{r+1}\times2 & \cdots & (-1)^{r+s-2}\times2 & (-1)^{r+s-1}r \\
  (-1)^{r} & (-1)^{r+1}\times2 & (-1)^{r+2}\times3 & \cdots & (-1)^{r+s-1} s & (-1)^{r+s}(r+s)
\end{bmatrix}.
$$
As we continue to process the first summation term in equation (\ref{eq10}), we take advantage of the symmetry of the coefficient matrix, obtaining
$$\sum_{i=0}^{r+1}\sum_{j=0}^{s+1}M_2(i,j)q^i_Aq^j_B=\sum_{i=0}^{r}\sum_{j=0}^{s}M_1(i,j)q^i_Aq^j_B(1+q_A)(1+q_B),$$
where $\boldsymbol{M_2}=(M_2(i,j))_{(r+2)\times(s+2)}$, i.e.,
$$
\boldsymbol{M_2}=
\begin{bmatrix}
  r+s & r & -1 & 1 & \cdots & (-1)^{s-2}& (-1)^{s-1} & (-1)^s \\
  s & 2 & -1 & 1 & \cdots & (-1)^{s-2}& (-1)^{s-1} & (-1)^{s+1} \\
  -1 & -1 & 0 & 0 & \cdots &0& (-1)^{s+2} & (-1)^{s+2} \\
  1 & 1 & 0 & 0 & \cdots &0& (-1)^{s+3} & (-1)^{s+3}\\
  \vdots & \vdots  & \vdots & \vdots & \ddots  &\vdots& \vdots & \vdots \\
  (-1)^{r-2}& (-1)^{r-2}  & 0  & 0  & \cdots &0& (-1)^{r+s-1} & (-1)^{r+s-1} \\
  (-1)^{r-1} & (-1)^{r+1} & (-1)^{r+2} & (-1)^{r+3} & \cdots & (-1)^{r+s-1}& (-1)^{r+s}\times2 & (-1)^{r+s}s\\
  (-1)^r & (-1)^{r+1} & (-1)^{r+2} & (-1)^{r+3} & \cdots & (-1)^{r+s-1}& (-1)^{r+s}r & (-1)^{r+s}(r+s)
\end{bmatrix}.
$$
In the new coefficient matrix $\boldsymbol{M_2}$, if we extract the three elements in the upper left corner and the three elements in the lower right corner, then we can build yet another coefficient matrix $\boldsymbol{M_3}$ containing the common factor $(q_A-q_B)^2$. We note this factor in the formula
\begin{align}\label{eq11}
 &\sum_{i=0}^{r+1}\sum_{j=0}^{s+1}M_2(i,j)q^i_Aq^j_B=\sum_{i=0}^{r-1}\sum_{j=0}^{s-1}M_3(i,j)q^i_Aq^j_B(q_A-q_B)^2\nonumber\\
 &\qquad{}+r+s+sq_A+rq_B+(-1)^{r+s}(rq^{r+1}_Aq^{s}_B+sq^r_Aq^{s+1}_B+(r+s)q^{r+1}_Aq^{s+1}_B)\nonumber\\
&=-(1+q_A)^{-1}(1+q_B)^{-1}(1-(-1)^rq_A^r)(1-(-1)^sq_B^s)(q_A-q_B)^2+r+s\nonumber\\
 &\qquad{}+sq_A+rq_B+(-1)^{r+s}(rq^{r+1}_Aq^{s}_B+sq^r_Aq^{s+1}_B+(r+s)q^{r+1}_Aq^{s+1}_B),
\end{align}
where $\boldsymbol{M_3}=(M_3(i,j))_{r\times s}$, i.e.,
 $${\boldsymbol{M_3}}:=
  \begin{bmatrix}
  -1 & 1 & -1 & \cdots & (-1)^{s-1} & (-1)^{s}\\
  1 & -1 & 1 & \cdots & (-1)^{s} & (-1) ^{s+1}\\
  \vdots & \vdots & \vdots &  \  & \vdots & \vdots\\
  (-1)^{r-1} & (-1)^{r}  & (-1)^{r+1} & \cdots &(-1)^{r+s-3} & (-1)^{r+s-2}\\
  (-1)^{r} & (-1)^{r+1} &(-1)^{r+2} & \cdots & (-1)^{r+s-2} & (-1)^{r+s-1}
\end{bmatrix}.
$$
Then by Lemma \ref{le3} and Equations (\ref{eq6}), (\ref{eq7}), (\ref{eq10}), and (\ref{eq11}), it is easy to verify that
$$R_D=R_0-2Q_0S_0=2Q_1S,$$
where $Q_1$ is equation (\ref{Q_1}) and $S$ is equation (\ref{S}). 
\end{proof}

\begin{remark}
For any fixed pair of values $r,s>0$, if $D=A^rB^s$, then $h=1$ based on Definition \ref{de1}. The reason why we can easily find the values of the elements in $\boldsymbol{M}$ is that when $h=1$, $D$ has a unique strategy. That is to say, the elements of $\boldsymbol{M}$ can be uniquely determined. When $h=2$, there are two strategies. Although we cannot uniquely determine the value of the elements in $\boldsymbol{M}$ for $h=2$, the two strategies are equivalent. But when $h>2$, the multiple strategies are not necessarily equivalent. Therefore, when $h$ is large, it is difficult for us to directly calculate the casino's asymptotic profit per coup $R_D$.
\end{remark}

\subsection{Differences between the two strategies}  It is difficult to directly calculate the values of the elements of $\boldsymbol{M}$, but we can find the difference in $R_D$ between two specified strategies. Before this, we first study the characteristics of the difference between the parameter matrices of any two strategies. Recalling Definition \ref{de1}, for any fixed $r,s>0$, any strategy $D$ has an equivalent strategy in the form $$D=A^{r_1}B^{s_1}\cdots A^{r_{h-1}}B^{s_{h-1}}A^{r_h}B^{s_h}.$$
We treat each set of consecutive pulls of the same arm as a strategy segment such as $A^{r_1}$. Treated this way, this strategy has a total of $2h$ strategy segments. We then try to exchange any two adjacent strategies in $D$ to form a new strategy $D'$.  To facilitate our description later, we exchange the $(2h-2)$-th and the $(2h-1)$-th segments in the strategy, that is,
$$D'=A^{r_1}B^{s_1}\cdots A^{r_{h-1}}A^{r_h}B^{s_{h-1}}B^{s_h}.$$

\begin{lemma}\label{le5}
For any fixed $h,r,s>0$, we consider two nonrandom-pattern strategies $D$, $D'$.
  $$D=A^{r_1}B^{s_1}\cdots A^{r_{h-1}}B^{s_{h-1}}A^{r_h}B^{s_h},$$  $$D'=A^{r_1}B^{s_1}\cdots A^{r_{h-1}}A^{r_h}B^{s_{h-1}}B^{s_h}.$$
Here $1\le k\le h$, $r_k>0$, $s_k>0$, $\sum\limits_{k=1}^{h}r_k=r$, and $\sum\limits_{k=1}^{h}s_k=s$.

Then, the difference between the casino's asymptotic profits per coup for the two strategies is
$$R_D-R_{D'}=2S(1-{b_{2h-2}})(1-{b_{2h-1}})\bigg(\sum\limits_{j=0}^{2h-3}(-1)^{j}\bigg(\prod\limits_{i=0}^{j-1}{b_i}+\prod\limits_{i=j}^{2h-3}{b_i}\bigg)\bigg),$$
where $$S=\frac{(q_A-q_B)^2(1+(-1)^{r+s}q_A^rq_B^s)}{(r+s)(1+q_A)^2(1+q_B)^2(1-(q_A^rq_B^s)^2)}.$$
\end{lemma}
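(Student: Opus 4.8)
The plan is to reduce the statement to a purely combinatorial identity for the quantity $Q_0$ of Lemma~\ref{le3}, and then to evaluate the relevant difference by decomposing $Q_0$ as a weighted sum over the contiguous arcs of the cyclic arm-pattern. By Lemma~\ref{le3}, $R_D=R_0-2Q_0^{D}S_0$ and $R_{D'}=R_0-2Q_0^{D'}S_0$, where $R_0$ and $S_0$ of~(\ref{eq6})--(\ref{eq7}) depend only on $r,s,q_A,q_B$ and not at all on the order in which the $A$'s and $B$'s are arranged; hence $R_D-R_{D'}=-2S_0(Q_0^{D}-Q_0^{D'})$. A one-line computation gives $S=S_0\,(q_A-q_B)^2/\big((1+q_A)^2(1+q_B)^2\big)$, so the lemma is equivalent to
$$Q_0^{D'}-Q_0^{D}=\frac{(q_A-q_B)^2}{(1+q_A)^2(1+q_B)^2}\,(1-b_{2h-2})(1-b_{2h-1})\sum_{j=0}^{2h-3}(-1)^{j}\Big(\prod_{i=0}^{j-1}b_i+\prod_{i=j}^{2h-3}b_i\Big),$$
and from here only $Q_0=\sum_{j=1}^{r+s}\sum_{k=2}^{r+s}(-1)^k\prod_{i=j}^{j+k-1}q_i$ of~(\ref{eq7}) is in play; in particular the ``doubling'' in~(\ref{eq5}) and the factor $1+(-1)^{r+s}q_A^rq_B^s$ have already been absorbed into Lemma~\ref{le3}.

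Next I would read $Q_0$ as a sum over contiguous cyclic arcs $W$ of the length-$(r+s)$ pattern, each arc of length $k$ (with $2\le k\le r+s$) contributing $(-1)^{k}\prod_{i\in W}q_i$. The patterns $D$ and $D'$ have the same $q$-sequence outside the window $R$ occupied by the two swapped segments $B^{s_{h-1}}=b_{2h-2}$ and $A^{r_h}=b_{2h-1}$, where $D$ reads $q_B^{s_{h-1}}q_A^{r_h}$ and $D'$ reads $q_A^{r_h}q_B^{s_{h-1}}$. Consequently every arc that is disjoint from $R$, or that contains all of $R$, contributes equally to $Q_0^{D}$ and $Q_0^{D'}$ and cancels in the difference; only arcs overlapping $R$ in an order-sensitive way survive. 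I would classify the survivors by (i) whether the arc enters $R$ from the side of the flanking segment $b_{2h-3}=A^{r_{h-1}}$ or from the side of $b_0=b_{2h}=B^{s_h}$, and (ii) how far it runs into the complementary block $U=b_0b_1\cdots b_{2h-3}$ of untouched segments; an arc traversing the first $j$ of these full segments from the $b_0$ end contributes the product $\prod_{i=0}^{j-1}b_i$, and symmetrically $\prod_{i=j}^{2h-3}b_i$ from the $b_{2h-3}$ end, while the sign $(-1)^{|W|}$ supplies the alternating $(-1)^{j}$. The arc-length ceiling $k\le r+s$ is exactly what truncates the $j$-sum at $2h-3$.

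For each fixed $j$ the remaining freedom is the length of the partial overlap with the two endpoint segments and with $B^{s_{h-1}},A^{r_h}$ themselves, and these are finite geometric series: each endpoint $A$-segment $A^{r_k}$ (resp.\ $B$-segment) contributes a factor proportional to $\frac{1-(-q_A)^{r_k}}{1+q_A}=\frac{1-b_{2k-1}}{1+q_A}$ (resp.\ $\frac{1-b_{2k}}{1+q_B}$), the overlap with $B^{s_{h-1}}$ and $A^{r_h}$ contributes the factor $(1-b_{2h-2})(1-b_{2h-1})$ together with one further power each of $(1+q_B)^{-1}$ and $(1+q_A)^{-1}$, and the order-sensitivity of a double partial overlap produces the numerator $(q_A-q_B)^{2}$. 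Summing the two directions through $U$ then reassembles $\sum_{j=0}^{2h-3}(-1)^{j}\big(\prod_{i=0}^{j-1}b_i+\prod_{i=j}^{2h-3}b_i\big)$, and collecting all the factors gives the displayed identity. The cleanest way to execute this last step is to imitate the proof of Lemma~\ref{le4}: encode the contribution difference as a two-dimensional array of coefficients of $q_A^iq_B^j$, factor out $(1+q_A)(1+q_B)$ using the symmetry of the array, and then factor out $(q_A-q_B)^2$ using its antisymmetry.

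The main obstacle is precisely this bookkeeping. One must track the cyclic wrap-around of arcs and its interaction with the bound $k\le r+s$ carefully, since a careless treatment either double-counts arcs or misses the very truncation that makes the $j$-sum finite; and one must verify that the finitely many geometric sums telescope into exactly $(q_A-q_B)^2/\big((1+q_A)^2(1+q_B)^2\big)$ times the stated $b$-polynomial rather than some superficially different rational function. The genuinely creative choice is the arc classification of the previous two paragraphs, designed so that products of consecutive $b_i$'s fall out on the nose; everything after that is careful but routine algebra, with the degenerate cases $r_{h-1}=1$, $s_{h-1}=1$, $r_h=1$, $s_h=1$ (and $h=1$, where $D=D'$ and both sides vanish) checked separately.
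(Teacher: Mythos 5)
Your reduction via Lemma~\ref{le3} to the identity $Q_0^{D'}-Q_0^{D}=\frac{(q_A-q_B)^2}{(1+q_A)^2(1+q_B)^2}(1-b_{2h-2})(1-b_{2h-1})\sum_{j}(-1)^j(\cdots)$ is exactly right, and your arc decomposition --- cancel arcs disjoint from or containing the swapped window, classify survivors by entry side and depth into the untouched block, and evaluate geometric sums segment by segment --- is precisely the strategy of the paper's Appendix~A (its $\xi$ and $\Delta\xi$ functions and Equations (14)--(22) are the formal version of your classification). The one piece of bookkeeping you gloss over, and which occupies a substantial part of the paper's proof, is the treatment of arcs whose \emph{starting} point lies inside one of the two swapped segments, which the paper handles by re-indexing (shifting by $a_{2h-1}$ or $a_{2h-2}$ and viewing the swap from the complementary side); otherwise your plan is the paper's proof in outline.
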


\begin{remark}
Now, for any strategy $D(r,s)$, we can always exchange in the way of the above lemma to produce a new strategy of reduced $h$ value. In this way, any strategy can eventually become the strategy of $h=1$ mentioned in Lemma \ref{le4}, allowing us to indirectly calculate the value of $R_D$ of any strategy $D$.
\end{remark}

\section{Proof of the conjecture} \label{sec6}
Theorem \ref{th3} provides the precise value of the casino's asymptotic profit per coup. To assure the two-armed machine is more profitable than two one-armed machines under the same strategy, we must make $R_D>0$, and the sign of $R_D$ is only related to the parameter $Q$.
$$Q=\sum\limits_{l=1}^{h}\sum\limits_{j=0}^{2h-1}(-1)^{j}\prod\limits_{k=2l+1}^{2l+j}{b_k}+\sum\limits_{l=1}^{h}\sum\limits_{j=0}^{2h-1}(-1)^{j+1}\prod\limits_{k=2l}^{2l+j}{b_k}.$$

This section gives a typical example to reflect the sign change of parameter $Q$, and the strict and detailed proof is in Appendix C. From definition \ref{de1}, we recall the vector
$$\boldsymbol{a}=(a_1,a_2,\cdots,a_{2h}):=(r_1,s_1,r_2,s_2,\cdots,r_h,s_h)$$ 
to represent the strategy $D$.
We know that when $a_k$ is an even number, $0<b_k<1$. When $a_k$ is an odd number, $-1<b_k<0$. That is, the sign of $Q$ is only related to that of $b_k$, and that of $b_k$ is only related to the parity of $a_k$. It is sufficient to examine the effect of the parameter $a_k$ on the sign of $Q$. Let us prove that $Q>0$ using the strategy parameter $\boldsymbol{a}=(1,1,1,2,1,3)$ that is adopted by the player.
\begin{lemma}
In case where a player uses a nonrandom periodic pattern strategy $D=ABABBABBB$ while playing a two-armed slot machine, then the
casino's asymptotic profit per coup attributable to the two-armed feature of the
machine, $R_D$, is positive.
\end{lemma}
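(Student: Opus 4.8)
The plan is to push the strategy $D=ABABBABBB$ — i.e.\ the block vector $\boldsymbol a=(r_1,s_1,r_2,s_2,r_3,s_3)=(1,1,1,2,1,3)$, so that $h=3$, $r=3$, $s=6$, $r+s=9$ — through Theorem~\ref{th2}, reduce the claim to the inequality $Q>0$, and then prove $Q>0$ by isolating a manifestly positive factor and analysing a concave quadratic.

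First I would record the data of Definition~\ref{de1}: since every $A$-block has length $1$ and the $B$-blocks have lengths $1,2,3$, the coefficients are $b_1=b_3=b_5=-q_A$, $b_2=-q_B$, $b_4=q_B^2$, $b_6=-q_B^3$, extended $2h$-periodically. Theorem~\ref{th2} gives $R_D=2QS$. Since $r+s=9$ is odd and $0<q_A,q_B<1$, we have $1+(-1)^{r+s}q_A^rq_B^s=1-q_A^3q_B^6>0$ and $1-(q_A^rq_B^s)^2=1-q_A^6q_B^{12}>0$; combined with $(q_A-q_B)^2>0$ (the standing hypothesis $p_A\neq p_B$) this gives $S>0$, so the lemma reduces to proving $Q>0$.

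Next I would recast $Q$ into the ``arrangement'' form announced in the remark after Theorem~\ref{th3}. Put $\hat U_l:=\sum_{j=0}^{2h-1}(-1)^j\prod_{k=2l}^{2l+j-1}b_k$, the alternating sum of products of consecutive $b_k$'s of lengths $0,1,\dots,2h-1$ starting at the even index $2l$. Using $b_{2l-1}\hat U_l=\sum_j(-1)^j\prod_{k=2l-1}^{2l+j-1}b_k$ and the $2h$-periodicity of $b$, the formula of Theorem~\ref{th2} regroups as $Q=\sum_{l=1}^{3}(1-b_{2l-1})\hat U_l$. Because all $A$-blocks have length $1$, $1-b_{2l-1}=1+q_A$ for every $l$, so $Q=(1+q_A)\,\Sigma$ with $\Sigma:=\hat U_1+\hat U_2+\hat U_3$; substituting the $b_k$ and collecting terms yields $\Sigma(q_A,q_B)=(3+q_B-q_B^2+q_B^3)+q_Aq_B(1-q_B)(1+q_B^3)-q_A^2q_B^3(1-q_B+q_B^2+3q_B^3)$. (Equivalently, one may just expand $Q$ from Theorem~\ref{th2} and read off the factor $1+q_A$.)

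It then remains to show $\Sigma>0$ on $(0,1)^2$. For fixed $q_B\in(0,1)$, $q_A\mapsto\Sigma(q_A,q_B)$ is quadratic with leading coefficient $-q_B^3(1-q_B+q_B^2+3q_B^3)<0$, hence concave, so its minimum over $q_A\in[0,1]$ is attained at $q_A=0$ or $q_A=1$. At $q_A=0$ one has $\Sigma=3+q_B(1-q_B+q_B^2)>3$; at $q_A=1$ one has $\Sigma=3(1-q_B^6)+2q_B(1-q_B)+2q_B^4(1-q_B)>0$. Both slices are positive for $q_B\in(0,1)$, so $\Sigma>0$, hence $Q=(1+q_A)\Sigma>0$ and $R_D=2QS>0$, as claimed. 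The step I expect to be the real obstacle is the regrouping in the third paragraph: turning the signed double sum defining $Q$ into $(1+q_A)$ times a polynomial that is \emph{concave} in $q_A$, since a naive expansion leaves $Q$ cubic in $q_A$ with negative leading coefficient and no obvious sign. Once the right arrangement is found, positivity comes down to two one-variable slices that are each visibly a positive constant plus nonnegative terms; this is precisely the mechanism the general argument in Appendix~C is set up to run, the only change for strategies with unequal $A$-block lengths being which common factor gets pulled out before the same concavity-and-endpoints analysis.
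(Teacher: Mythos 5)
Your proposal is correct, and it reaches the conclusion by a genuinely different route from the paper. The paper proves this lemma by telescoping over strategies with decreasing $h$: it writes $R_{(1,1,1,2,1,3)}$ as $\bigl[R_{(1,1,1,2,1,3)}-R_{(1,1,2,5)}\bigr]+\bigl[R_{(1,1,2,5)}-R_{(3,6)}\bigr]+R_{(3,6)}$, evaluates the base term by Lemma \ref{le4} and each increment by Lemma \ref{le5}, and observes that every summand is $2S$ times a product of manifestly positive factors; it then gives a second derivation that regroups the $36$ monomials of $Q$ from Theorem \ref{th2} into four nonnegative blocks, which is the combinatorial ``arrangement'' mechanism that Appendix~C runs in general. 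You instead bypass Lemmas \ref{le4} and \ref{le5} entirely: your regrouping $Q=\sum_{l}(1-b_{2l-1})\hat U_l$ is a valid identity (summing it reproduces exactly the $h+\sum\sum(-1)^j\prod b_i+h\prod b_i$ of Theorem \ref{th2}), and the key observation that $r_1=r_2=r_3=1$ forces $b_1=b_3=b_5=-q_A$ lets you pull out the common factor $1+q_A$ and leaves $\Sigma$ quadratic in $q_A$ with leading coefficient $-q_B^3(1-q_B+q_B^2+3q_B^3)<0$; I checked your expansion of $\Sigma$ against the paper's factored answer $(1+x)^2(1+xy)(1-y^2)(1+y^3)+(1+y)(1-x^2)(1+x)(1+y^5)+(1+x^3)(1-y^6)$ and they agree, and both endpoint slices $\Sigma(0,q_B)=3+q_B(1-q_B+q_B^2)$ and $\Sigma(1,q_B)=3(1-q_B^6)+2q_B(1-q_B)+2q_B^4(1-q_B)$ are indeed positive, so concavity finishes the job (with $S>0$ from $r+s=9$ odd and $p_A\neq p_B$). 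What your approach buys is a short, self-contained, calculus-flavored verification needing no machinery; what it gives up is generality, since both the common factor $1+q_A$ and the degree-two dependence on $q_A$ are artifacts of all $A$-blocks having length one, so the argument does not illustrate the telescoping/arrangement template that the paper's example is designed to showcase and that Appendix~C extends to arbitrary patterns.
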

\begin{proof} In this case,
\begin{align}
 R&_{(1,1,1,2,1,3)}\label{line1}\\
=&R_{(1,1,1,2,1,3)}-R_{(1,1,2,5)}\label{line2}\\
&+R_{(1,1,2,5)}-R_{(3,6)}\label{line3}\\
&+R_{(3,6)}\label{line4}.
\end{align}
Lines (\ref{line2}) and (\ref{line3}) can be found from Lemma \ref{le5}, and line (\ref{line4}) can be found from
Lemma \ref{le4}. Alternatively, line (\ref{line1}) can be found from Theorem \ref{th2}.

In more detail, with $x = q_A$ and $y = q_B$,
$$R_{(3,6)}=2S(1+x^3)(1-y^6),$$
\begin{align*}
R_{(1,1,2,5)}-R_{(3,6)}=2S&(1-b_2)(1-b_3)(1+b_0b_1-(b_0+b_1))\\
=2S&(1+y)(1-x^2)(1+x)(1+y^5),
\end{align*}
where $b_1=-x$, $b_2=-y$, $b_3=x^2$, and $b_4=-y^5$,  extended periodically with
period $4$, and
\begin{align*}
R_{(1,1,1,2,1,3)}-R_{(1,1,2,5)}=2S&(1-b_4)(1-b_5)((1+b_0b_1b_2b_3)-(b_0+b_1b_2b_3)\\
&+(b_0b_1+b_2b_3)-(b_0b_1b_2+b_3))\\
=2S&(1+x)^2(1+xy)(1-y^2)(1+y^3)
\end{align*}
where $b_1=-x$, $b_2=-y$, $b_3=-x$, $b_4=y^2$, $b_5=-x$, and $b_6=-y^3$, extended periodically with period $6$.

We conclude that
\begin{align*}
R_{(1,1,1,2,1,3)}=2S&[(1+x)^2(1+xy)(1-y^2)(1+y^3)\\
&+(1+y)(1-x^2)(1+x)(1+y^5)+(1+x^3)(1-y^6)]>0,
\end{align*}
which can also be derived directly from Theorem \ref{th2}:
\begin{align*}
R_{(1,1,1,2,1,3)}=2S&[(1-b_1+b_1b_2-b_1b_2b_3+b_1b_2b_3b_4-b_1b_2b_3b_4b_5)\\
&+(-b_2+b_2b_3-b_2b_3b_4+b_2b_3b_4b_5-b_2b_3b_4b_5b_6+b_2b_3b_4b_5b_6b_1)\\
&+(1-b_3+b_3b_4-b_3b_4b_5+b_3b_4b_5b_6-b_3b_4b_5b_6b_1)\\
&+(-b_4+b_4b_5-b_4b_5b_6+b_4b_5b_6b_1-b_4b_5b_6b_1b_2+b_4b_5b_6b_1b_2b_3)\\
&+(1-b_5+b_5b_6-b_5b_6b_1+b_5b_6b_1b_2-b_5b_6b_1b_2b_3)\\
&+(-b_6+b_6b_1-b_6b_1b_2+b_6b_1b_2b_3-b_6b_1b_2b_3b_4+b_6b_1b_2b_3b_4b_5)]\\
=2S&[(1-b_1)(1-b_2b_3b_4b_5b_6)\\
&+(1-b_2+b_1b_2-b_6+b_6b_1-b_6b_1b_2)(1-b_3b_4b_5)\\
&+(-b_3+b_2b_3-b_1b_2b_3+b_6b_1b_2b_3)(1-b_4)\\
&+(1-b_5+b_5b_6-b_5b_6b_1+b_5b_6b_1b_2-b_5b_6b_1b_2b_3)(1-b_4)]>0
\end{align*}

According to Theorem \ref{th2}, if $p_A\neq p_B$, $Q>0$, then $R_D>0$, implying that the Parrondo effect is observed for the nonrandom periodic pattern strategy $D$.
\end{proof}
\begin{remark}
In our proof, we used two methods. The first one is based on Lemma \ref{le5}, looking for similar strategies with fewer arm changes, and then calculating the difference in the casino's asymptotic profit per coup between them, which is generally positive. This calculation will split the sum into several positive terms and add them together, thus proving the conjecture. But this method will fail in some special cases.
The other method is based on Theorem \ref{th2}, directly calculating the casino's asymptotic profit per coup, and adding the negative terms in the sum to the unique corresponding positive terms to form a positive polynomial, forming an iterative relationship. Or the positive value of the sum is added to the unique corresponding negative term to form a negative polynomial. This type of negative polynomial can always find a positive polynomial with the same iterative structure, so that their addition is a new positive polynomial. This way of finding iterative relationships is the key to our proof of the theorem, and detailed description is in Appendix C.
\end{remark}
It has been determined that a two-armed slot machine has the potential to generate long-term profits for a casino. From Lemma \ref{le3}, we know that $R_D>0$ is equivalent to obtaining Inequality (\ref{eq4}); hence, it is evident that Parrondo's paradox will occur at this time. Specifically, a game on the futurity of a two-armed slot machine in which one arm is engaged can be considered a fair game. The player plays repeatedly in some fixed order and will lose in the long run. In particular, no matter how the player formulates a strategy, it will become an unfair game, and the Parrondo effect will occur.

The Futurity award setting is the most notable feature of the Futurity slot machine. This game that records the number of consecutive failures of gamblers is strikingly similar to the original history-dependent Parrondo games \cite{parrondo2000new}. In our example, the limiting condition $J=2$ specifies that the player's payoff distribution before each coup is related to the player's winning or losing situation in the previous coup.

In Section \ref{sec5}, we clearly derive the casino's asymptotic profit per coup; thus, the casino can improve the two-armed slot machine by adjusting the winning probability of the two arms appropriately. Following our extremely complicated proof in Section \ref{sec5}, we have mathematically deciphered the secret of a casino's long-term profitability. We hope that gamblers are cautious when facing the futurity of a two-armed slot machine that casinos claim to be fair.

\section*{Appendix A}

\begin{proof} [Proof of Lemma \ref{le5}]
For the convenience of subsequent writing, we define the following functions.
\begin{equation}\label{eq12}
\begin{aligned}
\xi^{D'}(a,b;c,d):&=\sum_{x=a+1}^{b}\sum_{y=c+1}^{d}(-1)^{y-x+1}\prod\limits_{i=x}^{y}q^{D'}_i\\
\xi^{D}(a,b;c,d):&=\sum_{x=a+1}^{b}\sum_{y=c+1}^{d}(-1)^{y-x+1}\prod\limits_{i=x}^{y}q^{D}_i\\
\Delta\xi(a,b;c,d):&=\xi^{D'}(a,b;c,d)-\xi^{D}(a,b;c,d).
\end{aligned}
\end{equation}
Based on Lemma \ref{le3}, we observe the structures of $D$ and $D'$ and study each one's product term from equation (\ref{eq7}), that is,
\begin{align}\label{eq13}
 Q^{D'}_0-Q^D_0&=\sum_{j=1}^{r+s}\sum_{k=2}^{r+s}(-1)^k\bigg(\prod\limits_{i=j}^{j+k-1}q^{D'}_i-\prod\limits_{i=j}^{j+k-1}q^{D}_i\bigg)\nonumber\\
:&=\sum_{x=1}^{r+s}\sum_{y=x+1}^{r+s+x-1}(-1)^{y-x+1}\prod\limits_{i=x}^{y}(q^{D'}_i-q^{D}_i)\nonumber\\
&=\Delta\xi(0,r+s;x,r+s+x-1).
\end{align}
Then we need to study only the values of the product term $(-1)^{y-x+1}\prod\limits_{i=x}^{y}q_i$ under different strategies. If the elements involved in a given product term do not contain the part in which the strategy segments are swapped, then clearly the values of the product term are equal under the two strategies, as follows.
$$D\to A\cdots \overbrace{\cdots B\cdots A\cdots }^{product\ term}\cdots \overbrace{\underbrace{B\cdots B}_{s_{h-1}}\underbrace{A\cdots A}_{r_h}}^{swap \ \ segments}\underbrace{B..B\rlap{$\overbrace{\phantom{AAAAAA}}^{product \ \ term}$}B..B}_{s_{h}}\cdots A\cdots \ \cdots \overbrace{\underbrace{B\cdots B}_{s_{h-1}}\underbrace{A\cdots A}_{r_h}}^{swap \ \ segments}\underbrace{B\cdots B}_{s_{h}},$$
$$D\to A\cdots \overbrace{\cdots B\cdots A\cdots }^{product\ term}\cdots \overbrace{\underbrace{A\cdots A}_{r_h}\underbrace{B\cdots B}_{s_{h-1}}}^{swap \ \ segments}\underbrace{B..B\rlap{$\overbrace{\phantom{AAAAAA}}^{product \ \ term}$}B..B}_{s_{h}}\cdots A\cdots \ \cdots \overbrace{\underbrace{A\cdots A}_{r_h}\underbrace{B\cdots B}_{s_{h-1}}}^{swap \ \ segments}\underbrace{B\cdots B}_{s_{h}}.$$
Then if $0< x <\sum\limits_{k=1}^{2h-3}a_k$, $x<y\le \sum\limits_{k=1}^{2h-3}a_k$, or $\sum\limits_{k=1}^{2h-1}a_k< x <\sum\limits_{k=1}^{2h}a_k=r+s$, $x<y\le \sum\limits_{k=1}^{4h-3}a_k$, then we have $\prod\limits_{i=x}^{y}q^{D'}_i=\prod\limits_{i=x}^{y}q^{D}_i$. On the other hand, if our product term includes all the elements in the swap segments, then the product of the same two is equal, as follows.
$$D\to A\cdots A\cdots \overbrace{\cdots \underbrace{B\cdots BA\cdots A}_{swap \ \ segments}\cdots }^{product\ \ term}\cdots B\cdots B,$$
$$D'\to A\cdots A\cdots \overbrace{\cdots \underbrace{A\cdots AB\cdots B}_{swap \ \ segments}\cdots }^{product\ \ term}\cdots B\cdots B.$$
Therefore, if $0< x \le \sum\limits_{k=1}^{2h-3}a_k$, $\sum\limits_{k=1}^{2h-1}a_k<y<r+s+x$, or if $\sum\limits_{k=1}^{2h-1}a_k< x \le r+s$, $\sum\limits_{k=1}^{4h-1}a_k\le y<r+s+x$, then we have $\prod\limits_{i=x}^{y}q^{D'}_i=\prod\limits_{i=x}^{y}q^{D}_i$.\\
Then by Equations (\ref{eq12}), (\ref{eq13}) and for the difference between two strategies $D'$, $D$ in intervals $0< x \le \sum\limits_{k=1}^{2h-3}a_k$ and $\sum\limits_{k=1}^{2h-1}a_k<x\le r+s$, we have
\begin{align}\label{eq14}
&\Delta\xi\bigg(0,\sum\limits_{k=1}^{2h-3}a_k;x,r+s+x-1\bigg)+\Delta\xi\bigg(\sum\limits_{k=1}^{2h-1}a_k,r+s;x,r+s+x-1\bigg)\nonumber\\
&=\Delta\xi\bigg(\sum\limits_{k=1}^{2h-1}a_k,\sum\limits_{k=1}^{4h-3}a_k;\sum\limits_{k=1}^{4h-3}a_k,\sum\limits_{k=1}^{4h-1}a_k\bigg)\nonumber\\
&=\xi^D\bigg(\sum\limits_{k=1}^{2h-1}a_k,\sum\limits_{k=1}^{4h-3}a_k;\sum\limits_{k=1}^{4h-3}a_k-1,\sum\limits_{k=1}^{4h-3}a_k\bigg)\Delta\xi\bigg(\sum\limits_{k=1}^{4h-3}a_k,\sum\limits_{k=1}^{4h-3}a_k+1;\sum\limits_{k=1}^{4h-3}a_k,\sum\limits_{k=1}^{4h-1}a_k\bigg).
\end{align}
From the above formula, we observe that the sum of all product terms can be written as the product of the sum of two product terms. We note that each strategy segment is a proportional sequence sum if the first product term retrieves the strategy in the direction of the arrow as follows.
$$D\to A\cdots A\cdots \overset{\sum\limits_{k=1}^{2h-1}a_k}{\mid}\overset{\longleftarrow}{\underbrace{B\cdots B}_{s_h}}\overset{\longleftarrow}{\underbrace{A\cdots A}_{r_1}}\cdots \overset{\longleftarrow}{\underbrace{B\cdots B}_{s_{h-2}}}\overset{\longleftarrow}{\underbrace{A\cdots A}_{r_{h-1}}}\overset{\sum\limits_{k=1}^{4h-3}a_k}{\mid}\cdots $$
The value marked directly above the vertical line signifies that the strategy has reached the corresponding coup at this time. This is a more clear method used to represent the sum of the geometric sequence. Then,
\begin{align}\label{eq15}
&\xi^D\bigg(\sum\limits_{k=1}^{2h-1}a_k,\sum\limits_{k=1}^{4h-3}a_k;\sum\limits_{k=1}^{4h-3}a_k-1,\sum\limits_{k=1}^{4h-3}a_k\bigg)\nonumber\\
&=\sum_{x=\sum\limits_{k=1}^{2h-1}a_k+1}^{\sum\limits_{k=1}^{4h-3}a_k}(-1)^{\sum\limits_{k=1}^{4h-3}a_k-x+1}\prod\limits_{i=x}^{\sum\limits_{k=1}^{4h-3}a_k}q^{D}_i\nonumber\\
&=\frac{-q_A(1-b_{4h-3})}{1+q_A}+b_{4h-3}\frac{-q_B(1-b_{4h-4})}{1+q_B}+\cdots +\prod_{i=1}^{2h-3}b_{4h-2-i}\frac{-q_B(1-b_{2h})}{1+q_B}\nonumber\\
&=\frac{-q_A}{1+q_A}+\frac{q_B-q_A}{(1+q_A)(1+q_B)}\bigg(-b_{4h-3}+b_{4h-3}b_{4h-4}-\cdots -\prod_{i=1}^{2h-3}b_{4h-2-i}\bigg)\nonumber\\
&\qquad{}+\frac{q_B}{1+q_B}\prod_{i=1}^{2h-2}b_{4h-2-i}\nonumber\\
&=\frac{-q_A}{1+q_A}+\frac{q_B-q_A}{(1+q_A)(1+q_B)}\bigg(\sum\limits_{j=1}^{2h-2}(-1)^j\prod_{i=1}^{j}b_{4h-2-i}\bigg)+\frac{q_A}{1+q_A}\prod_{i=1}^{2h-2}b_{2h-1+i}
\end{align}
We note that each strategy segment is a proportional sequence sum if the second product term retrieves the strategy in the direction of the arrow. The difference between the two strategies is as follows.
$$D\to A\cdots A\cdots \overset{\sum\limits_{k=1}^{4h-3}a_k}{\mid}\overset{\longrightarrow}{\underbrace{B\cdots B}_{s_{h-1}}}\overset{\longrightarrow}{\underbrace{A\cdots A}_{r_h}}\overset{\sum\limits_{k=1}^{4h-1}a_k}{\mid}\cdots ,$$
$$D'\to A\cdots A\cdots \overset{\sum\limits_{k=1}^{4h-3}a_k}{\mid}\overset{\longrightarrow}{\underbrace{A\cdots A}_{r_h}}\overset{\longrightarrow}{\underbrace{B\cdots B}_{s_{h-1}}}\overset{\sum\limits_{k=1}^{4h-1}a_k}{\mid}\cdots $$
 Then
\begin{align}\label{eq16}
&\Delta\xi\bigg(\sum\limits_{k=1}^{4h-3}a_k,\sum\limits_{k=1}^{4h-3}a_k+1;\sum\limits_{k=1}^{4h-3}a_k,\sum\limits_{k=1}^{4h-1}a_k\bigg)\nonumber\\
&=\sum_{y=\sum\limits_{k=1}^{4h-3}a_k+1}^{\sum\limits_{k=1}^{4h-1}a_k}(-1)^{y-\sum\limits_{k=1}^{4h-3}a_k}\prod\limits_{i=\sum\limits_{k=1}^{4h-3}a_k+1}^{y}(q^{D'}_i-q^{D}_i)\nonumber\\
&=\frac{-q_A(1-b_{4h-1})}{1+q_A}+b_{4h-1}\frac{-q_B(1-b_{4h-2})}{1+q_B}-\frac{-q_B(1-b_{4h-2})}{1+q_B}-b_{4h-2}\frac{-q_A(1-b_{4h-1})}{1+q_A}\nonumber\\
&=\frac{q_B-q_A}{(1+q_A)(1+q_B)}(1-b_{4h-2})(1-b_{4h-1}).
\end{align}
Next, we study the part of the swap segment that contains only $A$s. In fact, strategy $D'$ can also be regarded as the exchange of $A^{r_1}\cdots B^{s_{h-2}}A^{r_{h-1}}$ with $B^{s_{h-1}}$ in strategy $D$ to produce two new strategy exchange parts. As discussed above, we study the product term that does not contain the elements of the swapped part and the product term that contains all the elements of the swap part as follows.
$$D\to A\cdots A\cdots \underbrace{B\cdots B}_{s_{h-1}}\underbrace{A..A\rlap{$\overbrace{\phantom{AAAAAA}}^{product \ \ term}$}A..A}_{r_h}\underbrace{B..BB..B}_{s_h}\overbrace{\underbrace{A\cdots A}_{r_{1}}\cdots \underbrace{A\cdots A}_{r_{h-1}}\underbrace{B\cdots B}_{s_{h-1}}}^{swap\ \ parts}\underbrace{A\cdots A}_{r_{h}}\underbrace{B\cdots B}_{s_{h}},$$
$$D'\to A\cdots A\cdots \underbrace{A..A\rlap{$\overbrace{\phantom{AAAAAA}}^{product \ \ term}$}A..A}_{r_h}\underbrace{B..BB..B}_{s_h}\overbrace{\underbrace{B\cdots B}_{s_{h-1}}\underbrace{A\cdots A}_{r_{1}}\cdots \underbrace{A\cdots A}_{r_{h-1}}}^{swap\ \ parts}\underbrace{A\cdots A}_{r_{h}}\underbrace{B\cdots B}_{s_{h-1}}\underbrace{B\cdots B}_{s_{h}}.$$
Then if $\sum\limits_{k=1}^{2h-2}a_k< x< \sum\limits_{k=1}^{2h-1}a_k$ and $x<y\le r+s$, we have $\prod\limits_{i=x-a_{2h-1}}^{y-a_{2h-1}}q^{D'}_i=\prod\limits_{i=x}^{y}q^{D}_i$.
$D$ and $D'$ can then be written as follows.
$$D\to A\cdots A\cdots \underbrace{B\cdots B}_{s_{h-1}}\underbrace{A..A\rlap{$\overbrace{\phantom{AAAAAAAAAAAAAAAAA}}^{product \ \ term}$}A..A}_{r_h}\underbrace{B\cdots B}_{s_h}\underbrace{A\cdots A\cdots B\cdots B}_{swap\ \ parts}\underbrace{A..AA..A}_{r_h}\underbrace{B\cdots B}_{s_h},$$
$$D'\to A\cdots A\cdots \underbrace{A..A\rlap{$\overbrace{\phantom{AAAAAAAAAAAAAAAAA}}^{product \ \ term}$}A..A}_{r_h}\underbrace{B\cdots B}_{s_h}\underbrace{B\cdots B\cdots A\cdots A}_{swap\ \ parts}\underbrace{A..AA..A}_{r_{h}}\underbrace{B\cdots B}_{s_{h-1}}\underbrace{B\cdots B}_{s_h}.$$
Then if $\sum\limits_{k=1}^{2h-2}a_k< x\le \sum\limits_{k=1}^{2h-1}a_k$ and $\sum\limits_{k=1}^{4h-2}a_k\le y< r+s+x$, we have $\prod\limits_{i=x-a_{2h-2}}^{y-a_{2h-2}}q^{D'}_i=\prod\limits_{i=x}^{y}q^{D}_i$.\\
Then by Equations (\ref{eq12}) and (\ref{eq13}), for the interval $\sum\limits_{k=1}^{2h-2}a_k< x\le \sum\limits_{k=1}^{2h-1}a_k$ of strategy $D$ and for the interval $\sum\limits_{k=1}^{2h-3}a_k< x\le \sum\limits_{k=1}^{2h-3}a_k+a_{2h-1}$ of strategy $D'$, we have
\begin{align}\label{eq17}
&\xi^{D'}\bigg(\sum\limits_{k=1}^{2h-3}a_k,\sum\limits_{k=1}^{2h-3}a_k+a_{2h-1};x,r+s+x-1\bigg)-\xi^{D}\bigg(\sum\limits_{k=1}^{2h-2}a_k,\sum\limits_{k=1}^{2h-1}a_k;x,r+s+x-1\bigg)\nonumber\\
&=\sum_{x=\sum\limits_{k=1}^{2h-2}a_k+1}^{\sum\limits_{k=1}^{2h-1}a_k}\sum_{y=x+1}^{r+s+x-1}(-1)^{y-x+1}(\prod\limits_{i=x-a_{2h-2}}^{y-a_{2h-2}}q^{D'}_i-\prod\limits_{i=x}^{y}q^{D}_i)\nonumber\\
&=\xi^D\bigg(\sum\limits_{k=1}^{2h-2}a_k,\sum\limits_{k=1}^{2h-1}a_k;\sum\limits_{k=1}^{2h}a_k-1,\sum\limits_{k=1}^{2h}a_k\bigg)\nonumber\\
&\qquad{}\times\bigg(\sum_{y=\sum\limits_{k=1}^{2h}a_k+1}^{\sum\limits_{k=1}^{4h-2}a_k}(-1)^{y-\sum\limits_{k=1}^{2h}a_k}\prod\limits_{i=\sum\limits_{k=1}^{2h}a_k+1}^{y}(q^{D'}_{i-a_{2h-2}}-q^{D}_i)\bigg).
\end{align}
Similarly, we first study the permutation of the proportional sequence of the first product term.
$$D\to A\cdots A\cdots \overset{\sum\limits_{k=1}^{2h-2}a_k}{\mid}\overset{\longleftarrow}{\underbrace{A\cdots A}_{r_h}}\underbrace{B\cdots B}_{s_h}\overset{\sum\limits_{k=1}^{2h}a_k}{\mid}\cdots $$
 Then
\begin{equation}\label{eq18}
\begin{aligned}
\xi^D\bigg(\sum\limits_{k=1}^{2h-2}a_k,\sum\limits_{k=1}^{2h-1}a_k;\sum\limits_{k=1}^{2h}a_k-1,\sum\limits_{k=1}^{2h}a_k\bigg)=b_{2h}\frac{-q_A(1-b_{2h-1})}{1+q_A}.
\end{aligned}
\end{equation}
We next study the permutation of the proportional sequence of the second product term.
$$D\to A\cdots A\cdots \overset{\sum\limits_{k=1}^{2h}a_k}{\mid}\overset{\longrightarrow}{\underbrace{A\cdots A}_{r_1}}\overset{\longrightarrow}{\underbrace{B\cdots B}_{s_1}}\cdots \overset{\longrightarrow}{\underbrace{A\cdots A}_{r_{h-1}}}\overset{\longrightarrow}{\underbrace{B\cdots B}_{s_{h-1}}}\overset{\sum\limits_{k=1}^{4h-2}a_k}{\mid}\cdots ,$$
$$D'\to A\cdots A\cdots \overset{\sum\limits_{k=1}^{2h}a_k-a_{2h-2}}{\mid}\overset{\longrightarrow}{\underbrace{B\cdots B}_{s_{h-1}}}\overset{\longrightarrow}{\underbrace{A\cdots A}_{r_1}}\cdots \overset{\longrightarrow}{\underbrace{B\cdots B}_{s_{h-2}}}\overset{\longrightarrow}{\underbrace{A\cdots A}_{r_{h-1}}}\overset{\sum\limits_{k=1}^{4h-2}a_k-a_{2h-2}}{\mid}\cdots $$
 Then
\begin{align}\label{eq19}
&\sum_{y=\sum\limits_{k=1}^{2h}a_k+1}^{\sum\limits_{k=1}^{4h-2}a_k}(-1)^{y-\sum\limits_{k=1}^{2h}a_k}\prod\limits_{i=\sum\limits_{k=1}^{2h}a_k+1}^{y}(q^{D'}_{i-a_{2h-2}}-q^{D}_i)\nonumber\\
&=\frac{-q_B}{1+q_B}+\frac{q_A-q_B}{(1+q_A)(1+q_B)}\bigg(\sum\limits_{j=0}^{2h-4}(-1)^{j+1}b_{2h-2}\prod_{i=1}^{j}b_{2h+i}\bigg)+\frac{q_A}{1+q_A}\prod_{i=1}^{2h-2}b_{2h+i}\nonumber\\
&\qquad{}-\frac{-q_A}{1+q_A}-\frac{q_B-q_A}{(1+q_A)(1+q_B)}\bigg(\sum\limits_{j=1}^{2h-3}(-1)^j\prod_{i=1}^{j}b_{2h+i}\bigg)-\frac{q_B}{1+q_B}\prod_{i=1}^{2h-2}b_{2h+i}\nonumber\\
&=\frac{q_B-q_A}{(1+q_A)(1+q_B)}(1-b_{2h-2})\bigg(\sum\limits_{j=1}^{2h-2}(-1)^j\prod_{i=2}^{j}b_{2h-1+i}\bigg).
\end{align}

Finally, we observe the part of the original swap segment that contains only $B$s. Similarly, strategy $D'$ can be regarded as the exchange of $B^{s_{h}}A^{r_{1}}\cdots B^{s_{h-2}}$ with $A^{r_{h}}$ in strategy $D$ to yield two new strategy exchange parts as follows.
$$D\to A\cdots A\cdots \underbrace{\rlap{$\overbrace{\phantom{A}}^{product\ \ term}$}B..B..B..B}_{s_{h-1}}\overbrace{\underbrace{A\cdots A}_{r_h}\underbrace{B\cdots B}_{s_h}\cdots \underbrace{B\cdots B}_{s_{h-2}}}^{swap\ \ parts}\underbrace{A\cdots A}_{r_{h-1}}\underbrace{B\cdots B}_{s_{h-1}}\underbrace{A\cdots A}_{r_{h}}\underbrace{B\cdots B}_{s_{h}},$$
$$D'\to A\cdots A\cdots \underbrace{A\cdots A}_{r_{h}}\underbrace{\rlap{$\overbrace{\phantom{A}}^{product\ \ term}$}B..B..B..B}_{s_{h-1}}\overbrace{\underbrace{B\cdots B}_{s_h}\cdots \underbrace{B\cdots B}_{s_{h-2}}\underbrace{A\cdots A}_{r_h}}^{swap\ \ parts}\underbrace{A\cdots A}_{r_{h-1}}\underbrace{B\cdots B}_{s_{h-1}}\underbrace{B\cdots B}_{s_{h}}.$$
Then if $\sum\limits_{k=1}^{2h-3}a_k< x< \sum\limits_{k=1}^{2h-2}a_k$ and $x<y\le \sum\limits_{k=1}^{2h-2}a_k$, we have $\prod\limits_{i=x+a_{2h-1}}^{y+a_{2h-1}}q^{D'}_i=\prod\limits_{i=x}^{y}q^{D}_i$.\\
$D$ and $D'$ can then be written as follows.
$$D\to A\cdots A\cdots \underbrace{B..B\rlap{$\overbrace{\phantom{AAAAAAAAAAAAAA}}^{product\ \ term}$}B..B}_{s_{h-1}}\underbrace{A\cdots A\cdots B\cdots B}_{swap\ \ parts}\underbrace{A..AA..A}_{r_{h-1}}\underbrace{B\cdots B}_{s_{h-1}}\underbrace{A\cdots A}_{r_h}\underbrace{B\cdots B}_{s_{h-1}},$$
$$D'\to A\cdots A\cdots \underbrace{A\cdots A}_{r_{h}}\underbrace{B..B\rlap{$\overbrace{\phantom{AAAAAAAAAAAAAA}}^{product\ \ term}$}B..B}_{s_{h-1}}\underbrace{B\cdots B\cdots A\cdots A}_{swap\ \ parts}\underbrace{A..AA..A}_{r_{h-1}}\underbrace{B\cdots B}_{s_{h-1}}\underbrace{B\cdots B}_{s_{h}}.$$
Then if $\sum\limits_{k=1}^{2h-3}a_k< x\le \sum\limits_{k=1}^{2h-2}a_k$ and $\sum\limits_{k=1}^{4h-4}a_k<y<r+s+x$, we have $\prod\limits_{i=x+a_{2h-1}}^{y+a_{2h-1}}q^{D’}_i=\prod\limits_{i=x}^{y}q^{D}_i$.
Similarly, by Equations (\ref{eq12}) and (\ref{eq13}),  for the interval $\sum\limits_{k=1}^{2h-3}a_k< x\le \sum\limits_{k=1}^{2h-2}a_k$ of strategy $D$ and for the interval $\sum\limits_{k=1}^{2h-3}a_k+a_{2h-1}< x\le \sum\limits_{k=1}^{2h-1}a_k$ of strategy $D'$, we have
\begin{align}\label{eq20}
&\xi^{D'}\bigg(\sum\limits_{k=1}^{2h-3}a_k+a_{2h-1},\sum\limits_{k=1}^{2h-1}a_k;x,r+s+x-1\bigg)-\xi^{D}\bigg(\sum\limits_{k=1}^{2h-3}a_k,\sum\limits_{k=1}^{2h-2}a_k;x,r+s+x-1\bigg)\nonumber\\
&=\sum_{x=\sum\limits_{k=1}^{2h-3}a_k+1}^{\sum\limits_{k=1}^{2h-2}a_k}\sum_{y=x+1}^{r+s+x-1}(-1)^{y-x+1}(\prod\limits_{i=x+a_{2h-1}}^{y+a_{2h-1}}q^{D'}_i-\prod\limits_{i=x}^{y}q^{D}_i)\nonumber\\
&=\xi^{D}\bigg(\sum\limits_{k=1}^{2h-3}a_k,\sum\limits_{k=1}^{2h-2}a_k;\sum\limits_{k=1}^{2h-2}a_k-1,\sum\limits_{k=1}^{2h-2}a_k\bigg)\nonumber\\
&\qquad{}\times\bigg(\sum_{y=\sum\limits_{k=1}^{2h-2}a_k+1}^{\sum\limits_{k=1}^{4h-4}a_k}(-1)^{y-\sum\limits_{k=1}^{2h-2}a_k}\prod\limits_{i=\sum\limits_{k=1}^{2h-2}a_k+1}^{y}(q^{D'}_{i+a_{2h-1}}-q^{D}_i)\bigg).
\end{align}
Similarly, the permutation of the proportional sequence of the first product term is as follows.
$$D\to A\cdots A\cdots \overset{\sum\limits_{k=1}^{2h-3}a_k}{\mid}\overset{\longleftarrow}{\underbrace{B\cdots B}_{s_{h-1}}}\overset{\sum\limits_{k=1}^{2h-2}a_k}{\mid}\cdots $$
Then
\begin{equation}\label{eq21}
\begin{aligned}
\xi^{D}\bigg(\sum\limits_{k=1}^{2h-3}a_k,\sum\limits_{k=1}^{2h-2}a_k;\sum\limits_{k=1}^{2h-2}a_k-1,\sum\limits_{k=1}^{2h-2}a_k\bigg)=\frac{-q_B(1-b_{2h-2})}{1+q_B}.
\end{aligned}
\end{equation}
The permutation of the proportional sequence of the second product term is as follows.
$$D\to A\cdots A\cdots \overset{\sum\limits_{k=1}^{2h-2}a_k}{\mid}\overset{\longrightarrow}{\underbrace{A\cdots A}_{r_h}}\overset{\longrightarrow}{\underbrace{B\cdots B}_{s_h}}\cdots \overset{\longrightarrow}{\underbrace{A\cdots A}_{r_{h-2}}}\overset{\longrightarrow}{\underbrace{B\cdots B}_{s_{h-2}}}\overset{\sum\limits_{k=1}^{4h-4}a_k}{\mid}\cdots ,$$
$$D'\to A\cdots A\cdots \overset{\sum\limits_{k=1}^{2h-2}a_k+a_{2h-1}}{\mid}\overset{\longrightarrow}{\underbrace{B\cdots B}_{s_{h}}}\overset{\longrightarrow}{\underbrace{A\cdots A}_{r_1}}\cdots \overset{\longrightarrow}{\underbrace{B\cdots B}_{s_{h-2}}}\overset{\longrightarrow}{\underbrace{A\cdots A}_{r_{h}}}\overset{\sum\limits_{k=1}^{4h-4}a_k+a_{2h-1}}{\mid}\cdots $$
Then
\begin{align}\label{eq22}
&\sum_{y=\sum\limits_{k=1}^{2h-2}a_k+1}^{\sum\limits_{k=1}^{4h-4}a_k}(-1)^{y-\sum\limits_{k=1}^{2h-2}a_k}\prod\limits_{i=\sum\limits_{k=1}^{2h-2}a_k+1}^{y}(q^{D'}_{i+a_{2h-1}}-q^{D}_i)\nonumber\\
&=\frac{-q_B}{1+q_B}+\frac{q_A-q_B}{(1+q_A)(1+q_B)}\bigg(\sum\limits_{j=1}^{2h-3}(-1)^{j}\prod_{i=1}^{j}b_{2h-1+i}\bigg)+\frac{q_A}{1+q_A}\prod_{i=1}^{2h-2}b_{2h-2+i}\nonumber\\
&\qquad{}-\frac{-q_A}{1+q_A}-\frac{q_B-q_A}{(1+q_A)(1+q_B)}\bigg(\sum\limits_{j=1}^{2h-3}(-1)^j\prod_{i=1}^{j}b_{2h-2+i}\bigg)-\frac{q_B}{1+q_B}\prod_{i=1}^{2h-2}b_{2h-2+i}\nonumber\\
&=\frac{q_A-q_B}{(1+q_A)(1+q_B)}(1-b_{2h-1})\bigg(\sum\limits_{j=0}^{2h-3}(-1)^{j}\prod_{i=1}^{j}b_{2h-1+i}\bigg).
\end{align}

Hence by Lemma \ref{le3} and Equations (\ref{eq13})--(\ref{eq22}), we have
\begin{align*}
R_D-R_{D'}&=2S(1-{b_{2h-2}})(1-{b_{2h-1}})\bigg(\sum\limits_{j=0}^{2h-3}(-1)^{j}\prod\limits_{i=1}^{j}{b_{2h-1+i}}+\sum\limits_{j=1}^{2h-2}(-1)^{j}\prod\limits_{i=1}^{j}{b_{4h-2-i}}\bigg)\\
&=2S(1-{b_{2h-2}})(1-{b_{2h-1}})\bigg(\sum\limits_{j=0}^{2h-3}(-1)^{j}(\prod\limits_{i=0}^{j-1}{b_{i}}+\prod\limits_{i=j}^{2h-3}{b_i})\bigg),
\end{align*}
where $\boldsymbol{a}=(r_1,s_1,r_2,s_2,\cdots ,r_h,s_h)$ and $$S=\frac{(q_A-q_B)^2(1+(-1)^{r+s}q_A^rq_B^s)}{(r+s)(1+q_A)^2(1+q_B)^2(1-(q_A^rq_B^s)^2)}.$$
\end{proof}

\section*{Appendix B}

\begin{proof} [Proof of Theorem \ref{th2}]
We first exchange strategy segment $B$ of the $(2h-2)$-th segment with strategy segment $A$ of the $(2h-1)$-th segment to obtain a new strategy $D_{h-1}$. Then we exchange strategy segment $B$ of the $(2h-4)$-th segment with strategy segment $A$ of the $(2h-3)$-th segment in $D_{h-1}$ to obtain a new strategy $D_{h-2}$. If we keep swapping in this way, we obtain the strategy
$$D_{l}:=A^{r_1}B^{s_1}\cdots A^{r_{l-1}}B^{s_{l-1}}A^{r_l+r_{l+1}+\cdots +r_{h}}B^{s_l+s_{l+1}+\cdots +s_{h}},$$
where $\boldsymbol{a(l)}:=\bigg(r_1,s_1,r_2,s_2,\cdots ,r_{l-1},s_{l-1},\sum\limits_{m=l}^{h}r_{m},\sum\limits_{m=l}^{h}s_{m}\bigg)$. Every time we exchange $B^{a_{2l-2}}$ and $A^{a_{2l-1}}$, we obtain a new $D_{l-1}$ and $\boldsymbol{a(l-1)}$, and the asymptotic profit per coup difference corresponding to the exchange can be calculated by Lemma \ref{le5} for each exchange. Thus, we can continue to exchange and add all the differences to complete the proof.  Next we show that for $1\le l \le h$, $R_{D_l}=2Q_lS$, where
$$Q_l=l+\sum\limits_{m=1}^{2l}\sum\limits_{j=1}^{2l-1}(-1)^j\prod\limits_{i=m}^{m+j-1}{b_i(l)}+l\prod\limits_{i=1}^{2l}{b_i(l)}.$$
Given Lemma \ref{le4} and Lemma \ref{le5}, we only need to verify the form of $Q_l$, and we prove it by induction.
By Lemma \ref{le4}, if $l=1$, then the conclusion is trivial. Now, we suppose that for $l = n-1$, $1<n\le h$, the following equation holds.
\begin{align}\label{eq23}
Q_{n-1}=(n-1)+\sum\limits_{m=1}^{2n-2}\sum\limits_{j=1}^{2n-3}(-1)^j\prod\limits_{i=m}^{m+j-1}{b_i(n-1)}+(n-1)\prod\limits_{i=1}^{2n-2}{b_i(n-1)}
\end{align}
We proceed to show that it also holds for $l=n$.\\
\begin{align*}
D_{n}&=A^{r_1}B^{s_1}\cdots A^{r_{n-1}}B^{s_{n-1}}A^{\sum\limits_{m=n}^{h}r_{m}}B^{\sum\limits_{m=n}^{h}s_{m}},\\ D_{n-1}&=A^{r_1}B^{s_1}\cdots A^{r_{n-1}}A^{\sum\limits_{m=n}^{h}r_{m}}B^{s_{n-1}}B^{\sum\limits_{m=n}^{h}s_{m}}.
\end{align*}
We observe the relationship of the element's function between $b_i(n-1)$ and $b_i(n)$ in the parameter vectors $\boldsymbol{a(n-1)}$ and $\boldsymbol{a(n)}$ of the two strategies after the exchange as follows.
\begin{enumerate}
\item $b_i(n-1)=b_i(n)$, for $1\le i \le 2n-4$.\\
\item $b_i(n-1)=b_{i+2}(n)$, for $2n-1\le i \le 4n-6$.\\
\item ${b_{2n-3}(n-1)}={b_{2n-3}(n)}{b_{2n-1}(n)}$.\\
\item ${b_{2n-2}(n-1)}={b_{2n-2}(n)}b_{2n}(n)$.
\end{enumerate}
Let us first look at the second summation in Equation (\ref{eq23}) and separate the parts in which $b_i(n-1)$ and $b_i(n)$ are equal as follows.
\begin{align}\label{eq24}
&\sum\limits_{m=1}^{2n-2}\sum\limits_{j=1}^{2n-3}(-1)^j\prod\limits_{i=m}^{m+j-1}{b_i(n-1)}=\sum\limits_{m=1}^{2n-2}\sum\limits_{k=m}^{2n-4+m}(-1)^{k-m+1}\prod\limits_{i=m}^{k}{b_i(n-1)}\nonumber\\
&=\bigg(\sum\limits_{m=1}^{2n-4}+\sum\limits_{m=2n-3}^{2n-2}\bigg)\bigg(\sum\limits_{k=m}^{2n-4}+\sum\limits_{k=2n-3}^{2n-2}+\sum\limits_{k=2n-1}^{2n-4+m}\bigg)(-1)^{k-m+1}\prod\limits_{i=m}^{k}{b_i(n-1)}\nonumber\\
&=\sum\limits_{m=1}^{2n-4}\bigg(\sum\limits_{k=m}^{2n-4}+\sum\limits_{k=2n+1}^{2n-2+m}\bigg)(-1)^{k-m+1}\prod\limits_{i=m}^{k}{b_i(n)}\nonumber\\
&\qquad{}+\sum\limits_{m=1}^{2n-4}(-1)^m\bigg(b_{2n-1}(n)\prod\limits_{i=m}^{2n-3}{b_i(n)}-\prod\limits_{i=m}^{2n}{b_i(n)}\bigg)\nonumber\\
&\qquad{}-b_{2n-3}(n)b_{2n-1}(n)-b_{2n-2}(n)b_{2n}(n)+b_{2n-3}(n)b_{2n-2}(n)b_{2n-1}(n)b_{2n}(n)\nonumber\\
&\qquad{}+\sum\limits_{k=2n+1}^{4n-5}(-1)^{k}\prod\limits_{i=2n-3}^{k}{b_i(n)}+\sum\limits_{k=2n+1}^{4n-4}(-1)^{k-1}b_{2n-2}(n)\prod\limits_{i=2n}^{k}{b_i(n)}\nonumber\\
&=\sum\limits_{m=1}^{2n-3}\bigg(\sum\limits_{k=m}^{2n-4}+\sum\limits_{k=2n}^{2n-2+m}\bigg)(-1)^{k-m+1}\prod\limits_{i=m}^{k}{b_i(n)}+\sum\limits_{m=1}^{2n-4}(-1)^mb_{2n-1}(n)\prod\limits_{i=m}^{2n-3}{b_i(n)}\nonumber\\
&\qquad{}-b_{2n-3}(n)b_{2n-1}(n)-b_{2n-2}(n)b_{2n}(n)+\sum\limits_{k=1}^{2n-4}(-1)^{k-1}b_{2n-2}(n)\prod\limits_{i=0}^{k}{b_i(n)}.
\end{align}
We find that after separating the unequal parts of $b_i(n-1)$ and $b_i(n)$, the partially separated sums can be merged back, since $\prod\limits_{i=2n-3}^{2n-2}b_i(n-1)=\prod\limits_{i=2n-3}^{2n}b_i(n)$, leading to the second equation above. Now by Lemma \ref{le5}, we have
\begin{align*}
&Q_{n}-Q_{n-1}=(1-{b_{2n-2}}(n))(1-{b_{2n-1}}(n))\bigg(\sum\limits_{j=0}^{2n-3}(-1)^{j}\bigg(\prod\limits_{i=0}^{j-1}{b_{i}(n)}+\prod\limits_{i=j}^{2n-3}{b_i}(n)\bigg)\bigg)\\
&=(1-{b_{2n-2}}(n))(1-{b_{2n-1}}(n))\\
&\qquad{}\times\bigg(\sum\limits_{j=1}^{2n-4}(-1)^{j-1}\prod\limits_{i=0}^{j}{b_{i}(n)}+1-b_{2n}(n)+\sum\limits_{j=1}^{2n-4}(-1)^j\prod\limits_{i=j}^{2n-3}{b_i}(n)-b_{2n-3}(n)+\prod\limits_{i=0}^{2n-3}b_{i}(n)\bigg)\\
&=\sum\limits_{j=1}^{2n-4}(-1)^{j+1}\bigg(\prod\limits_{i=0}^{j}-\prod\limits_{i=-1}^{j}+\prod\limits_{i=-2}^{j}\bigg)b_i(n)-\sum\limits_{j=1}^{2n-4}(-1)^{j+1}b_{2n-2}(n)\prod\limits_{i=0}^{j}{b_{i}(n)}\\
&\qquad{}+\sum\limits_{j=1}^{2n-4}(-1)^{2n-2-j}\bigg(\prod\limits_{i=j}^{2n-3}-\prod\limits_{i=j}^{2n-2}+\prod\limits_{i=j}^{2n-1}\bigg)b_i(n)-\sum\limits_{j=1}^{2n-4}(-1)^jb_{2n-1}(n)\prod\limits_{i=j}^{2n-3}{b_i}(n)\\
&\qquad{}+\bigg(\prod\limits_{i=2n-2}^{2n-1}-\prod\limits_{i=2n-2}^{2n-2}-\prod\limits_{i=2n-1}^{2n-1}-\prod\limits_{i=2n}^{2n}+\prod\limits_{i=2n-1}^{2n}-\prod\limits_{i=2n-2}^{2n}-\prod\limits_{i=2n-3}^{2n-3}+\prod\limits_{i=2n-3}^{2n-2}-\prod\limits_{i=2n-3}^{2n-1}\bigg)b_i(n)\\
&\qquad{}+\bigg(\prod\limits_{i=0}^{2n-3}-\prod\limits_{i=0}^{2n-2}-\prod\limits_{i=-1}^{2n-3}\bigg)b_i(n)+1+b_{2n-2}(n)b_{2n}(n)+b_{2n-3}(n)b_{2n-1}(n)+\prod\limits_{i=1}^{2n}b_i(n)\\
&=\bigg(\sum\limits_{m=2n-2}^{2n}\sum\limits_{k=m}^{2n-2+m}+\sum\limits_{m=1}^{2n-3}\sum\limits_{k=2n-3}^{2n-1}\bigg)(-1)^{k-m+1}\prod\limits_{i=m}^{k}{b_i(n)}-\sum\limits_{k=1}^{2n-4}(-1)^{k-1}b_{2n-2}(n)\prod\limits_{i=0}^{k}{b_i(n)}\\
&\qquad{}-\sum\limits_{m=1}^{2n-4}(-1)^mb_{2n-1}(n)\prod\limits_{i=m}^{2n-3}{b_i(n)}+1+b_{2n-2}(n)b_{2n}(n)+b_{2n-3}(n)b_{2n-1}(n)+\prod\limits_{i=1}^{2n}b_i(n).\\
\end{align*}
We add the above equation to Equation (\ref{eq24}) and substitute the sum into Equation (\ref{eq23}) to obtain
\begin{align*}
Q_{n}&=n+\sum\limits_{m=1}^{2n}\sum\limits_{k=m}^{2n-2+m}(-1)^{k-m+1}\prod\limits_{i=m}^{k}{b_i(n)}+n\prod\limits_{i=1}^{2n}{b_i(n)}\\
&=n+\sum\limits_{m=1}^{2n}\sum\limits_{j=1}^{2n-1}(-1)^j\prod\limits_{i=m}^{m+j-1}{b_i(n)}+n\prod\limits_{i=1}^{2n}{b_i(n)}
\end{align*}
In particular, if $l=h$, we obtain the exact form of the casino's asymptotic profit per coup, namely,
$$R_D=2QS,$$
where
 $$Q=h+\sum\limits_{m=1}^{2h}\sum\limits_{j=1}^{2h-1}(-1)^j\prod\limits_{i=m}^{m+j-1}{b_i}+h\prod\limits_{i=1}^{2h}{b_i},$$
 $$S=\frac{(q_A-q_B)^2(1+(-1)^{r+s}q_A^rq_B^s)}{(r+s)(1+q_A)^2(1+q_B)^2(1-(q_A^rq_B^s)^2)}.$$
  \end{proof}

\section*{Appendix C}
   In this section, we first analyze the relationship between the sum terms in $Q$ and simplify some of the sum terms into a polynomial. Then, we find the relationship between these polynomials, and finally, we prove the conjecture.

According to our discussion in the previous section, the sign of the casino's asymptotic profit per coup $R_D$ is consistent with the sign of the function $Q$. We can think of $b_1$, $b_2$,$\cdots$,$b_{2h}$ as $2h$ points and draw them in a cycle with the points numbered in the order of $\{b_k\}$'s numbering.
In the figure below, we take $h=5$ as an example.
$$
\begin{matrix}
  \  & \  & b_2 & \longrightarrow & b_3 & \longrightarrow & b_4 & \  & \  \\
  \  & \nearrow & \  & \  & \  & \ & \ & \searrow & \  \\
  b_1 & \  & \  & \  & \  & \  & \ & \ & b_5 \\
  \uparrow & \  & \  & \  & \  & \  & \ & \ & \downarrow\\
   b_{10} & \  & \  & \  & \  & \  & \ & \ & b_6 \\
  \  & \nwarrow & \  & \  & \  & \ & \ & \swarrow & \  \\
  \  & \  & b_9 & \longleftarrow & b_8 &\longleftarrow & b_7 & \  & \
\end{matrix}
$$
Returning to the function $Q$, we can rewrite it as the sum of specific product terms as follows:
\begin{equation}\label{eq25}
\begin{aligned}
Q=\sum\limits_{l=1}^{h}\sum\limits_{j=0}^{2h-1}(-1)^{j}\prod\limits_{k=2l+1}^{2l+j}{b_k}+\sum\limits_{l=1}^{h}\sum\limits_{j=0}^{2h-1}(-1)^{j+1}\prod\limits_{k=2l}^{2l+j}{b_k}.\\
\end{aligned}
\end{equation}
In fact, we only need to study the sum of the product term $(-1)^{j-i+1}\prod\limits_{k=i}^{j}{b_k}$ for some $i,j$. Here, $i$ represents the ``starting" point, $j$ refers to the ``end" point, and the product term ``length" is $j-i+1$. We see that $Q$ is the sum of two parts. The first part adds all product terms from length $0$ to $2h-1$ starting from odd ordinal numbers, and the second part adds all product terms from length $1$ to $2h$ starting from even ordinal numbers.

Examining the simplest example first, we assume $b_k>0$ for $1\le k \le 2h$.
$$Q=\sum\limits_{l=1}^{h}(1-{b_{2l}})(1-b_{2l+1}(1-b_{2l+2}(\cdots (1-b_{2l+2h-1})\cdots )))>0.$$
From this heuristic example, we note that for a product term starting from any starting point, we can always find a corresponding term starting from the same starting point whose product ends adjacent to the original end point. We add these two terms to obtain a new polynomial, for example
$$(-1)^{j-i+1}\prod\limits_{k=i}^{j}{b_k}+(-1)^{j-i+2}\prod\limits_{k=i}^{j+1}{b_k}=(-1)^{j-i+1}\prod\limits_{k=i}^{j}{b_k}(1-b_{j+1}).$$
At this stage, the sign of this polynomial is related solely to the product coefficient at the beginning of the polynomial. We can also find the product term corresponding to the adjacent new end point and add these three terms to form a new polynomial.
$$\sum\limits_{l=1}^{3}(-1)^{j-i+l}\prod\limits_{k=i}^{j+l-1}{b_k}=(-1)^{j-i+1}\prod\limits_{k=i}^{j}{b_k}(1-b_{j+1}(1-b_{j+2})).$$
At this stage, we can define the first type of iteration as that starting from the product term corresponding to the fixed starting point and iterating from different ending points to the starting point. Taking the starting point as $1$ and $h=4$ as an example, we consider the sum of the product terms of lengths $0$ to $2h-1$ according to Equation (\ref{eq25}). The iterative architecture and the resulting polynomial are as follows.
$$1\leftarrow b_1\leftarrow b_2\leftarrow b_3\leftarrow b_4\leftarrow b_5\leftarrow b_6\leftarrow b_7,$$
$$\sum\limits_{l=0}^{7}(-1)^{l}\prod\limits_{k=1}^{l}{b_k}=1-b_1(1-b_2(1-b_3(1-b_4(1-b_5(1-b_6(1-b_7)))))).$$
Similarly, we can also fix the end point of the product term and add several terms adjacent to the starting point to obtain a polynomial.
$$\sum\limits_{l=1}^{3}(-1)^{j-i+l}\prod\limits_{k=i-l+1}^{j}{b_k}=(-1)^{j-i+1}\prod\limits_{k=i}^{j}{b_k}(1-b_{i-1}(1-b_{i-2})).$$
In the same way, we can also define a second type of iteration as iterating from different starting points to the end point, starting with the product term corresponding to the fixed end point. Taking the end point as $8$ and $h=4$ as an example, we consider the sum of the product terms of lengths $1$ to $2h$  according to Equation (\ref{eq25}). Then, the iterative architecture and the resulting polynomial are as follows.
$$b_1\rightarrow b_2\rightarrow b_3\rightarrow b_4\rightarrow b_5\rightarrow b_6\rightarrow b_7\rightarrow -b_8,$$
$$\sum\limits_{l=1}^{8}(-1)^{l}\prod\limits_{k=9-l}^{8}{b_k}=-b_8(1-b_7(1-b_6(1-b_5(1-b_4(1-b_3(1-b_2(1-b_1))))))).$$
 In general, however, if $b_k<0$, we cannot iterate this way forever. Because $1-b_k>1$, we cannot always ensure a $b_k$ value between $0$ and $1$. We call these $k$ such that $b_k<0$  ``negative" points and the corresponding $k$ such that $b_k>0$  ``positive" points, and we can number the negative points as follows.

\begin{definition}\label{de2}
We consider the general sequence $\{b_k\}$ with a period of $2h$, where $0<\\| b_k\\|<1$, for $1\le k \le 2h$ with all $b_i<0$ renumbered as $c_k$. We call the points of these points ``negative" points; the rest are called ``positive" points.
\begin{enumerate}
\item $c_1:=\min\{i\mid i>0,\ {b_{i}}<0\}$,\\
\item $c_k=\min\{i\mid i>c_{k-1},\ {b_{i}}<0\}$,\\
\item $c_{\delta}:=\max\{c_k\mid c_k\le 2h\}$.
\end{enumerate}
Then, $\delta$ represents the number of negative points in a periodic sequence. Accordingly, we also extend the definition of $c_k$ as $c_k=c_{k\pm \delta}$.
\end{definition}
If $\delta=1$, we can still follow the simplest example. In fact, without losing generality, we can suppose that ${b_{m^*}}<0$ and $m^*$ are odd. Then
\begin{align*}
Q&=\sum\limits_{l=1}^{h}(1-{b_{2l}})\bigg((1-b_{2l+1}(\cdots (1-b_{m^*-2}(1-b_{m^*-1}))\cdots ))\\
&\qquad{}+\lvert\mu(2l+1,m^*)\rvert(1-b_{m^*+1}(\cdots (1-b_{2h+2l-2}(1-b_{2h+2l-1}))\cdots ))\bigg)>0.
\end{align*}

In the subsequent discussion, we only consider the case of $\delta>1$; for convenience, we introduce the following functions.
\begin{definition}\label{de3}
 For $i,j\in \mathbb{Z}$, we define the product term function $\mu(i,j)$ as
 $$
\mu(i,j):=
\begin{cases}
(-1)^{j-i+1}\prod\limits_{k=i}^{j}{b_k}& \text{ $i\le j$ } \\
1 & \text{ $i>j$ },
\end{cases}
$$
we call $i$ the ``starting" point of the product term, $j$ the ``end" point of the product term, and $j-i+1$ the ``length" of the product term.

In particular, if the starting and end points are negative points, we have another way of expressing the product term. For $\delta>1$, $0<j<\delta$, we define a function $\nu(i,j)$ as
 $$
\nu(i,j):=
\begin{cases}
(-1)^{c_{j}-c_{i}+1}\prod\limits_{k=c_{i}}^{c_{j}}{b_k}& \text{ $i\le j$} \\
1 & \text{ $i>j$ },
\end{cases}
$$
and we denote $\tilde{\nu}(i,j):=\mu(c_i+1,c_{j})$.

We define a polynomial route to represent the polynomial after our iteration. We use the direction of the arrow to indicate the direction of iteration, corresponding to the following forms of the polynomial.
  \begin{enumerate}
\item The first type of iterative direction
$$\mu(i,j)\leftarrow b_{j+1} \leftarrow b_{j+2} \leftarrow\cdots  \leftarrow b_n,$$
$$\mu(i,j)(1-b_{j+1}(1-b_{j+2}(\cdots (1-b_{n-1}(1-b_n))\cdots ))).$$
\item The second type of iterative direction
$$b_m\rightarrow\cdots \rightarrow b_{i-2}\rightarrow b_{i-1}\rightarrow\mu(i,j),$$
$$\mu(i,j)(1-b_{i-1}(1-b_{i-2}(\cdots (1-b_{m+1}(1-b_m))\cdots ))).$$
\item The mixed iterative direction
$$b_m\rightarrow\cdots \rightarrow b_{i-2}\rightarrow b_{i-1}\rightarrow\mu(i,j)\leftarrow b_{j+1} \leftarrow b_{j+2} \leftarrow\cdots  \leftarrow b_n,$$
$$\mu(i,j)(1-b_{j+1}(\cdots (1-b_{n-1}(1-b_n))\cdots ))(1-b_{i-1}(\cdots (1-b_{m+1}(1-b_m))\cdots )).$$
  \end{enumerate}

\end{definition}
\begin{remark}
According to our previous discussion, $\mu(i,j)$ is the product term with $i$ as the starting point and $j$ as the end point, and $\nu(i,j)$ is the product term with $c_i$ as the starting point and $c_j$ as the end point. Thus, the function $Q$ can also be abbreviated as follows.
\begin{equation}\label{eq26}
Q=\sum\limits_{l=1}^{h}\sum\limits_{j=0}^{2h-1}\mu(2l+1,2l+j)+\sum\limits_{l=1}^{h}\sum\limits_{j=0}^{2h-1}\mu(2l,2l+j).\\
\end{equation}
Due to the nature of positive and negative points, the following relationship between functions $\mu$, $\nu$, and $\tilde{\nu}$ holds.
\end{remark}

\begin{lemma}\label{pr4}
  From Definitions \ref{de2} and \ref{de3}, the following conclusions can be inferred.
  \begin{enumerate}
\item $\mu(c_i,c_{j})=\nu(i,j)=(-1)^{c_{j}-c_{i}+j-i}\prod\limits_{k=c_{i}}^{c_{j}}{\lvert b_k\rvert}$ \ \ for $i\le j$.\\
\item $\mu(c_i,c_{j}-1)$, $\nu(i,j)$ and $\tilde{\nu}(i,j)$ have the same sign.\\
\item If $0<b_k<1$ \ \ for $m\le k\le n$, then the sign of the iterative polynomial defined in \ref{de3} is the same as that of $\mu(i,j)$.\\
  \end{enumerate}
\end{lemma}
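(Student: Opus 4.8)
The plan is to treat the three claims separately and in the order listed, observing that parts (1) and (2) are pure bookkeeping about the positions of the negative points, while part (3) needs only a short induction. Throughout I would keep the periodic extensions $b_k=b_{k\pm 2h}$ and $c_k=c_{k\pm\delta}$ in force, so that any identity proved on one period propagates to all integer indices, and I would dispose of the degenerate case $i>j$ at the end by noting that $\mu$, $\nu$, $\tilde\nu$ all take the value $1$ there, which is (vacuously) consistent with every sign assertion.

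For part (1), the equality $\mu(c_i,c_j)=\nu(i,j)$ is immediate from Definition \ref{de3}: when $i\le j$ both sides are literally $(-1)^{c_j-c_i+1}\prod_{k=c_i}^{c_j}b_k$. For the second equality I would invoke Definition \ref{de2}: by construction the indices $k$ with $c_i\le k\le c_j$ and $b_k<0$ are exactly $c_i,c_{i+1},\dots,c_j$, so there are precisely $j-i+1$ of them, whence $\prod_{k=c_i}^{c_j}b_k=(-1)^{j-i+1}\prod_{k=c_i}^{c_j}|b_k|$. Substituting this and combining the two powers of $-1$ gives $\mu(c_i,c_j)=(-1)^{c_j-c_i+1}(-1)^{j-i+1}\prod_{k=c_i}^{c_j}|b_k|=(-1)^{c_j-c_i+j-i}\prod_{k=c_i}^{c_j}|b_k|$, as claimed.

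For part (2) I would apply the identical count to three slightly shifted intervals. Part (1) already gives that $\nu(i,j)$ has sign $(-1)^{c_j-c_i+j-i}$. In the block $c_i\le k\le c_j-1$ the negative points are $c_i,\dots,c_{j-1}$ (using $c_{j-1}\le c_j-1<c_j$), i.e. $j-i$ of them, so $\mu(c_i,c_j-1)=(-1)^{(c_j-1)-c_i+1}\prod b_k$ has sign $(-1)^{(c_j-c_i)+(j-i)}$. In the block $c_i+1\le k\le c_j$ the negative points are $c_{i+1},\dots,c_j$ (using $c_i+1\le c_{i+1}$), again $j-i$ of them, so $\tilde\nu(i,j)=\mu(c_i+1,c_j)=(-1)^{c_j-(c_i+1)+1}\prod b_k$ has sign $(-1)^{(c_j-c_i)+(j-i)}$ as well. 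The three signs coincide. For part (3), fixing the first iterative direction, I would set $\Phi_t:=1-b_t\bigl(1-b_{t+1}(\cdots(1-b_n)\cdots)\bigr)$ for $j+1\le t\le n$ and argue by downward induction: $\Phi_n=1-b_n\in(0,1)$ since $0<b_n<1$, and if the inner value lies in $(0,1)$ then $b_t\cdot(\text{inner})\in(0,b_t)\subset(0,1)$, so $\Phi_t\in(0,1)$ too; hence the nested factor multiplying $\mu(i,j)$ is strictly positive and the iterated polynomial inherits the sign of $\mu(i,j)$. The second iterative direction is the same with indices reversed, and the mixed direction is $\mu(i,j)$ times two such strictly positive nested factors.

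The argument has no genuine obstacle; the only place demanding care is the endpoint bookkeeping in parts (1)--(2) — for each of the intervals $[c_i,c_j]$, $[c_i,c_j-1]$, $[c_i+1,c_j]$ one must pin down exactly which of the labelled points $c_i,\dots,c_j$ is included — and the companion point of keeping the periodic indexing of $c_k$ coherent so that the single-period statements in Definition \ref{de3} apply to the shifted indices that arise. Part (3) is essentially the remark that any finite nested expression $1-b_{t_1}(1-b_{t_2}(\cdots))$ with all factors in $(0,1)$ again lies in $(0,1)$.
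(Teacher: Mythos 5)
Your argument is correct: the paper states Lemma \ref{pr4} without proof, treating it as immediate from Definitions \ref{de2} and \ref{de3}, and your verification supplies exactly the intended bookkeeping — counting that the intervals $[c_i,c_j]$, $[c_i,c_j-1]$, $[c_i+1,c_j]$ contain $j-i+1$, $j-i$, $j-i$ negative points respectively, and the downward induction showing each nested factor $1-b_t(1-b_{t+1}(\cdots))$ lies in $(0,1)$. No gaps; the degenerate case $i>j$ and the periodic indexing are handled appropriately.
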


If we start from any starting point and end at a positive point to form a product term, we can always add the product terms of its adjacent end points to form a polynomial. However, in the process of iteration, when the end point of the product term is a negative point, the iteration stops. Next, we try to find a factorization rule so that our sum can skip some negative points so as to allow iteration to continue. By requiring that the product term decomposed by this rule be unique and not interfere with any other, we obtain the following lemma.

\begin{lemma}\label{le6}
For $\delta>1$, $i<j<i+\delta$, if $v(i,j):=(-1)^{c_{j}-c_i+j-i}<0$, we define $\varphi(i)$ for $i<\varphi(i)\le j $ that satisfies the following conditions.
  \begin{enumerate}
  \item $(-1)^{\varphi(i)-i}<0$ and $v(i,\varphi(i))<0$.\\
  \item If there exists $i<m<\varphi(i)$ such that $(-1)^{m-i}<0$, $v(i,m)<0$, then there must exist $i<k<m$ such that $(-1)^{m-k}<0$, $v(k,m)<0$.\\
  \item If there exists $i<n<\varphi(i)$ such that $(-1)^{\varphi(i)-n}<0$, $v(n,\varphi(i))<0$, then there must exist $n<k<\varphi(i)$ such that $(-1)^{k-n}<0$, $v(n,k)<0$.
    \end{enumerate}
    We further define $\psi(j)$ for $i\le \psi(j)< j $  that satisfies the following conditions.
    \begin{enumerate}
  \item $(-1)^{j-\psi(j)}<0$ and $v(\psi(j),j)<0$.\\
  \item If there exists $\psi(j)<m<j$ such that $(-1)^{m-\psi(j)}<0$, $v(\psi(j),m)<0$, then there must exist $\psi(j)<k<m$ such that $(-1)^{m-k}<0$, $v(k,m)<0$.\\
  \item If there exists $\psi(j)<n<j$ such that $(-1)^{j-n}<0$, $v(n,j)<0$, then there must exist $n<k<j$ such that $(-1)^{k-n}<0$, $v(n,k)<0$.
    \end{enumerate}

Then either $\varphi(i)$ or $\psi(j)$ must exist. In particular, if $\varphi(i)=j$, then $\psi(j)=i$.
\end{lemma}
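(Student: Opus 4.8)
The plan is to recast the statement as a purely combinatorial fact about a walk on a four--cycle and then run a two--sided descent. To each index $k$ attach the \emph{type} $t(k):=(k\bmod 2,\ c_k\bmod 2)\in(\mathbb{Z}/2\mathbb{Z})^2$. Using the cocycle identities $v(a,c)=v(a,b)\,v(b,c)$ and $(-1)^{c-a}=(-1)^{b-a}(-1)^{c-b}$ one checks directly that $(-1)^{b-a}<0$ iff $t(a),t(b)$ differ in the first coordinate; that $v(a,b)<0$ iff $t(a),t(b)$ differ in \emph{exactly one} coordinate; hence a pair $a<b$ with $(-1)^{b-a}<0$ and $v(a,b)<0$ (call it a \emph{bad pair}) is precisely a pair with $t(b)=t(a)+(1,0)$, and the hypothesis $v(i,j)<0$ says exactly that $t(i),t(j)$ differ in one coordinate. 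Finally, between two consecutive negative points $c_k,c_{k+1}$ the first coordinate of $t$ always flips while the second flips iff an even number of positive points lies between them, so $t(i),t(i+1),\dots,t(j)$ is a walk on the four--cycle on $(\mathbb{Z}/2\mathbb{Z})^2$ with edges $\{00,10\},\{10,01\},\{01,11\},\{11,00\}$, each step a \emph{bad edge} (only the first coordinate flips) or a \emph{good edge} (both flip). In this dictionary, a valid $\varphi(i)$ is an $m\in(i,j]$ with $(i,m)$ a bad pair such that no bad pair $(i,m')$ with $m'\in(i,m)$ is left--irreducible (condition 2) and every bad pair $(n,m)$ with $n\in(i,m)$ is right--reducible (condition 3); $\psi(j)$ is the mirror notion, and Lemma~\ref{pr4} supplies the sign bookkeeping for $\mu,\nu,\tilde\nu$.

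\smallskip
\noindent\emph{Weak existence.} First I would prove that at least one of $\varphi(i),\psi(j)$ admits a point satisfying condition 1, i.e.\ that the walk $t(i),\dots,t(j)$ meets $t(i)+(1,0)$ at an index in $(i,j]$ or meets $t(j)+(1,0)$ at an index in $[i,j)$. If $t(j)=t(i)+(1,0)$ this is trivial with $j$ (resp.\ $i$). Otherwise the hypothesis forces $t(i),t(j)$ to differ only in the second coordinate, so $\{t(i),t(j)\}$ is a \emph{non--edge} of the four--cycle; a walk from $t(i)$ to $t(j)\neq t(i)$ cannot stay inside $\{t(i),t(j)\}$, so it visits one of the other two vertices, which are exactly $t(i)+(1,0)$ and $t(j)+(1,0)$, and being $\neq t(i),t(j)$ it does so strictly inside $(i,j)$. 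This yields a $\varphi$--candidate or a $\psi$--candidate; in particular, when $(i,j)$ is itself a bad pair, $j$ is a $\varphi$--candidate and $i$ a $\psi$--candidate.

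\smallskip
\noindent\emph{Conditions 2 and 3.} The heart of the argument is the upgrade from a candidate to a valid value, and the key observation is that if $(i,m)$ and $(n,m)$ are both bad pairs then $t(n)=t(m)+(1,0)=t(i)$: an obstruction to condition 3 for a $\varphi$--candidate $m$ (a right--irreducible bad pair $(n,m)$, $n\in(i,m)$) forces the walk to return to the type $t(i)$ strictly inside $(i,m)$ and exhibits the bad pair $(n,m)$ on the strictly shorter interval $(n,m)$ with $v(n,m)<0$; symmetrically an obstruction to condition 2 for a $\psi$--candidate $k$ forces a return to $t(j)$ strictly inside $(k,j)$. I would then induct on $j-i$, tracking the obstructions generated by the $\varphi$--candidates together with the mirror analysis on the $\psi$ side, and show that these two families of obstructions — returns to $t(i)$ from the left and returns to $t(j)$ from the right — cannot both persist, so that at least one side produces a point meeting all three conditions. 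An equivalent and possibly cleaner bookkeeping is to split the type sequence into its even-- and odd--offset subsequences, which the four--cycle structure shows are arbitrary independent binary sequences, and to translate conditions 1--3 into elementary statements about a binary sequence that changes value, after which the induction becomes one on these sequences. The ``in particular'' clause needs no new work: the three conditions defining ``$\varphi(i)=j$'' are verbatim the three conditions defining ``$\psi(j)=i$'' — condition 1 is ``$(i,j)$ bad'' in both, and conditions 2 and 3 are carried to one another by the reflection $a\mapsto i+j-a$ — so one is satisfiable iff the other is.

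\smallskip
\noindent\textbf{Main obstacle.} The delicate point is precisely the last step: the bookkeeping showing that the left--hand obstructions (to condition 3 of $\varphi$) and the right--hand obstructions (to condition 2 of $\psi$) cannot coexist. Everything preceding it — the type dictionary, the cocycle identities, weak existence via the four--cycle, and the reflection symmetry behind the ``in particular'' clause — is routine once the reformulation is in place.
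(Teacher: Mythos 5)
Your reformulation is sound and, as far as it goes, correct: the type map $t(k)=(k\bmod 2,\,c_k\bmod 2)$ does turn the sign data into a walk on the four--cycle in which every step is $+(1,0)$ (a ``bad edge'', $v(k,k+1)<0$) or $+(1,1)$ (a ``good edge''), a bad pair is exactly a pair of indices with $t(b)=t(a)+(1,0)$, the weak--existence argument via the non--edge $\{t(i),t(j)\}$ is valid, and the observation that the ``in particular'' clause is a substitution identity (conditions 2 and 3 of ``$\varphi(i)=j$'' coincide with conditions 2 and 3 of ``$\psi(j)=i$'') is correct and disposes of that clause cleanly. This dictionary is essentially the paper's factorization $v(i,j)=\prod_{k}v(k,k+1)$ and its $+/-$ diagrams, recast more transparently.

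However, there is a genuine gap, and you have located it yourself: everything you actually prove is the easy half of the lemma. Producing an index satisfying condition 1 alone is not what the lemma asserts; the assertion is that some candidate also satisfies the two minimality/reducibility conditions 2 and 3, and your argument for this is only the sentence ``I would then induct on $j-i$ \dots and show that these two families of obstructions cannot both persist,'' which you then flag as the main obstacle. That step is not bookkeeping --- it is the entire content of the lemma, and it is exactly where the paper's proof spends all of its effort: the contradiction argument there repeatedly extracts sub--columns $\{c_k\}_{k=i_2}^{j_2}\subset\{c_k\}_{k=i}^{j}$, splits into three cases according to the relative position of $\psi(j_1)$ and $\varphi(i_1')$, and argues that the descent must terminate in a configuration where a valid $\varphi$ or $\psi$ is forced. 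Your remark that an obstruction to condition 3 for a $\varphi$--candidate is a return of the walk to the vertex $t(i)$ (and mirror for $\psi$) is a correct and potentially useful invariant, but you do not show why the left--hand and right--hand obstruction families cannot coexist, nor do you set up an induction hypothesis strong enough to close the descent (note the subtlety that a bad pair $(n,m)$ inside $(i,j)$ need not satisfy the hypothesis $n<m<n+\delta$ with $v(n,m)<0$ in a form that lets you simply invoke the lemma for the shorter interval and be done --- the recursive call produces a $\varphi$ or $\psi$ for the subinterval, and you must still argue that this propagates to a valid $\varphi(i)$ or $\psi(j)$ for the original one). As written, the proposal establishes the reformulation and the boundary cases but not the theorem.
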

\begin{proof}
For $\delta>1$, $i<j<i+\delta$, if $v(i,j):=(-1)^{c_{j}-c_i+j-i}<0$, then it is easy to verify that
\begin{align}\label{eq27}
v(i,j)=\prod\limits_{k=i}^{j-1}v(k,k+1).
\end{align}
We define the functions $\varphi$ and $\psi$ according to the lemma.

If $(-1)^{j-i}<0$, $v(i,j)<0$, then the lemma is trivial. Next we show, by contradiction, that either $\varphi(i)$ or $\psi(j)$ also exists for $(-1)^{j-i}>0$, $v(i,j)<0$.

Let us examine a simple case first. If $v(i,i+1)<0$ or $v(j-1,j)<0$ then $\varphi(i)=i+1$ or $\psi(j)=j-1$. Specifically, we can divide it into the following three cases.

\begin{align*}
\overset{i}{\mid}\ \ - \overset{\varphi(i)}{\mid}\cdots \ \cdots \ \cdots \ \overset{\psi(j)}{\mid}-\ \ \overset{j}{\mid},\ \overset{i}{\mid}\ \ - \overset{\varphi(i)}{\mid}\cdots \ \cdots \  \cdots \ \overset{j-1}{\mid}+\ \ \overset{j}{\mid},\ \overset{i}{\mid}\ \ + \overset{i+1}{\mid}\cdots \ \cdots \ \cdots \ \overset{\psi(j)}{\mid}-\ \ \overset{j}{\mid}.
\end{align*}
Here $\overset{m}{\mid}\ \cdots \ - \ \cdots  \ \overset{n}{\mid}$ means $v(m,n)<0$, and $\overset{m}{\mid}\ \cdots \ + \ \cdots  \ \overset{n}{\mid}$ means $v(m,n)>0$.

From this simple case, we can easily extend to the general case. Taking the positive sequence as an example, the reverse sequence is symmetrical with the positive sequence. If $v(i,i+1)>0$, $v(l,l+1)>0$, $(-1)^{l+1-i}>0$, and $v(k,k+1)<0$ for $1<k<l<j$, then $\varphi(i)=l+1$.
$$\overset{i}{\mid}\ \ + \overset{i+1}{\mid}- \overset{i+2}{\mid}-\overset{i+3}{\mid} \ \cdots \ - \ \cdots  \overset{l-2}{\mid} -\overset{l-1}{\mid} - \ \overset{l}{\mid}\ \ + \overset{l+1}{\mid}.$$
Similarly, if $v(j-1,j)>0$, $v(l-1,l)>0$, $(-1)^{l-j+1}<0$, and $v(k-1,k)<0$ for $1<l<k<j$, then $\psi(j)=l-1$.

Suppose we find such a counterexample column $\{c_k\}^j_{k=i}$, where $(-1)^{j-i}>0$, that has no corresponding $\varphi(i)$ or $\psi(j)$ to satisfy the definition in the lemma. According to the above discussion, either $v(i,i+1)>0$ and $v(j-1,j)>0$, or else $\psi(j)=j-1$ or $\varphi(i)=i+1$ must exist. Further, from our general case, our example must have $v(k,k+1)<0$ for $i+1 \le k\le i+2l$ and $j-2l'-1\le k \le j-2$, $l,l'\in\mathbb{N}$. Since $(-1)^{j-i}>0$, there must be an odd number of $k$ such that
\begin{align}\label{eq28}
v(k,k+1)>0 \ \  for \  \ i<k<j-1.
\end{align}
If there is only one $k$ value satisfying the above formula, that is, $i+2l+2=j-2l'-1$, $v(i+2l+1,i+2l'+2)>0$, then we consider $i_1=i$, $j_1=i+2l+2$, $i'_1=i+2l'+1$, $j'_1=j$. Then $(-1)^{j_1-i_1}(-1)^{j'_1-i'_1}=(-1)^{j-i+1}<0$, that is, $(-1)^{j_1-i_1}<0$ or $(-1)^{j'_1-i'_1}<0$. Hence, there must exist $\varphi(i)=j_1$ or $\psi(j)=i_1$, in contradiction to the supposition.
$$\overset{i_1}{\mid}  + \mid-\mid-\mid-\mid-\underset{i'_1}{\mid}+\overset{j_1}{\mid}-\mid-\mid-\mid + \underset{j'_1}{\mid}.$$

Next, we show three findings regarding the counterexample column $\{c_k\}^j_{k=i}$. First, we may find a sub-column of the counter-example column such that an element of this sub-column is still a counter-example to our lemma. Second, the number of elements in the sub-column that satisfies our supposition will become less and less according to our selection method. Third, we eventually find that the sub-column satisfying the assumption is always an example of the contradictions discussed above, proving the lemma by contradiction.
If there is more than one $k$ that satisfies Equation (\ref{eq28}), then we define
\begin{align*}
\underline{k}&:=\min\{k\mid v(k,k+1)>0, i<k<j-1\}, \\
\overline{k}&:=\max\{k\mid v(k,k+1)>0, i<k<j-1\}.
\end{align*}
For $i_1=i$, $j_1=\overline{k}+1$, we have $v(i_1,j_1)<0$, $(-1)^{j_1-i_1}<0$. By the definition of the functions $\varphi$, $\psi$ and the supposition, $\psi(j_1)>i_1$ must exist. Similarly, we consider $i'_1=\underline{k}$, $j'_1=j$, in which case $\varphi(i'_1)<j_1$ must exist.
$$\overset{i_1}{\mid}  + \mid-\mid-\mid\ \cdots \underset{i'_1}{\mid}+\ \mid\ \cdots \ -\ \cdots \ \mid\ +\overset{j_1}\mid\ \cdots \ \mid-\mid-\mid + \underset{j'_1}{\mid}.$$
Moreover, since $(-1)^{j_1-\psi(j_1)}<0$, we have $\psi(j_1)>i'_1$. Similarly, we have $\varphi(i'_1)<j_1$. We further note that $\psi(j_1) \neq \varphi(i'_1)$. In fact, if $\psi(j_1) = \varphi(i'_1)$, then by Equation (\ref{eq27}), we have $$v(i_1,j'_1)=v(i_1,i'_1)v(i'_1,\varphi(i'_1))v(\psi(j_1),j_1)v(j_1,j'_1)>0,$$ which contradicts the supposition. We now describe three cases as follows.

In the first case, $\psi(j_1)<\varphi(i'_1)$. Then, by Equation (\ref{eq27}), the sign of $v(m,n)$ has only the following unique case for $i'_1\le m,n\le j_1$.
$$\underset{i'_1}{\mid}\ \cdots \ +\ \cdots \overset{\psi(j_1)}{\mid} \ \cdots  \ - \ \cdots   \   \underset{\varphi(i'_1)}{\mid}\cdots \ +\ \cdots \  \overset{j_1}{\mid}.$$
Then we let $i_2=\psi(j_1)$ and $j_2=\varphi(i'_1)$, and we consider the sub-column $\{c_k\}^{j_2}_{k=i_2} \subset \{c_k\}^{j}_{k=i}$. The number of elements in column $\{c_k\}^{j_2}_{k=i_2}$ at this stage is less than the number of elements in column $\{c_k\}^{j}_{k=i}$, and if there is only one $k_2$ such that $v(k_2,k_2+1)>0$ for $i_2<k_2<j'_2-1$, then is the supposition is contradicted. So, we retreat to the case in which there are multiple $k_2$ values in the sub-column satisfying the above formula. In this case, there will always be contradictions in the supposition as the number of elements decreases. At this stage, it is easy to verify that it satisfies the above case.

If $\psi(j_1)>\varphi(i'_1)$.  $v(i'_1,i'_1+1)>0$ and $v(j_1-1,j_1)>0$, so $\varphi(i'_1)>i'_1+1$, and $\psi(j_1)<j_1-1$. When we consider $i_2=\varphi(i'_1)$ and $j_2=\psi(j_1)$, then $$(-1)^{j_2-i_2}=(-1)^{j_1-i'_1}(-1)^{j_2-j_1}(-1)^{i'_1-i_2}>0,$$
 and there must be an odd number of $k_2$ values such that $v(k_2,k_2+1)>0$ for $i_2<k_2<j_2-1$.
$$\overset{i_1}{\mid}  + \mid-\mid-\mid\ \cdots \underset{i'_1}{\mid}+ \mid\ \cdots \ -\ \cdots \ \mid+\underset{i_2}{\mid} \ \cdots  \ - \ \cdots   \   \overset{j_2}{\mid}+\mid\ \cdots \ -\ \cdots \ \  \mid + \overset{j_1}{\mid}\ \cdots \ \mid-\mid-\mid + \underset{j'_1}{\mid}.$$
Similarly, we define
\begin{align*}
&\underline{k_2}:=\min\{k\mid v(k,k+1)>0, i_2<k<j_2-1\},\\
&\overline{k_2}:=\max\{k\mid v(k,k+1)>0, i_2<k<j_2-1\}.
\end{align*}
If $\underline{k_2}=\overline{k_2}$, then the definition of $\psi(j_1)$ or $\varphi(i'_1)$ is violated. So $\underline{k_2}<\overline{k_2}$, and we consider the following two cases.

In the second case, $(-1)^{\underline{k_2}-i_2}>0$ or $(-1)^{j_2-1-\overline{k_2}}>0$. Here, without losing generality, we can suppose that $(-1)^{\underline{k_2}-i_2}>0$.
We consider $i_3=i_1$, $j_3=\underline{k_2}+1$, and by the supposition there must exist $\psi(j_3)>i'_1$. Now we consider $i_4=\psi(j_3)$, $j_4=i_2-1$, $i'_4=\psi(j_3)+1$, $j'_4=i_2$. Here the sub-column $\{c_k\}^{j'_4}_{k=i_4} \subset \{c_k\}^{j}_{k=i}$.
$$\overset{i_3}{\mid}  + \mid-\mid-\mid\ \cdots \underset{i'_1}{\mid}\ \cdots \ + \ \cdots \overset{\psi(j_3)}{\mid} + \ \ \underset{i'_4}{\mid}\cdots  \ - \ \cdots   \overset{j_4}{\mid}+ \underset{j'_4}{\mid}+\ \mid\ \cdots \ \mid-\mid-\mid + \overset{j_3}{\mid}.$$
It is easy to verify that the sub-column $\{c_k\}^{j'_4}_{k=i_4}$ satisfies the initial case. Similarly, we note that the number of elements that satisfy the initial situation is reduced. As mentioned above, this reduction will always lead to a contradiction of the supposition.

In the third case, $(-1)^{\underline{k_2}-i_2}<0$, and $(-1)^{j_2-1-\overline{k_2}}<0$. Here we consider $i_3=i_1$, $j_3=\overline{k_2}+1$, $i'_3=\underline{k_2}$, and $j'_3=j'_1$. Then $\varphi(i'_3)>i'_3$ and $\psi(j_3)<j_3$ must exist.
\begin{align*}
&\overset{i_3}{\mid}  + \mid-\mid-\mid\ \cdots \underset{i'_1}{\mid}\ \ \cdots \ -\ \cdots  \underset{\varphi(i'_1)}{\mid}-\ \ \mid\cdots \underset{i'_3}{\mid}+\mid\ \cdots \ -\ \cdots \   \mid\ +\overset{j_3}{\mid}\cdots \\
&\cdots \mid\ \ -\overset{\psi(j_1)}{\mid}\cdots \ -\ \cdots \ \  \overset{j_1}{\mid}\cdots \ \mid-\mid-\mid + \underset{j'_3}{\mid}.
\end{align*}
If $\psi(j_3)<\varphi(i'_3)$, then the logic proceeds as in the first case. If $\psi(j_3)>\varphi(i'_3)$, we can still find the second and third cases corresponding to it, but as we further analyze the middle part $i'_3\le k\le j_3$, the number of occurrences of the third case will decrease because the number of middle elements decreases with the number of choices. Thus, the third case will always transform into the second case or else yield a contradiction. Hence, we complete the proof.
\end{proof}
\begin{remark}
The existence of $\varphi$ and $\psi$ is related to the length of the product term, but the value of the function, if present, is independent of the product term's length. We default to considering product terms that include any negative points. It can be seen from the above lemma that any starting point $c_i$ has only one unique end point corresponding to it, so we only need the parameter $i$, representing the end point, to formulate $\varphi(i)$. Similarly, there is only one unique starting point corresponding to any end point $j$, so we only need the parameter $j$, representing to represent the starting point, to formulate $\psi(j)$.

According to the factorization method of the above lemma, if there is a corresponding relationship between each negative point, it will be a one-to-one mapping relationship. In other words, $\psi$ is the inverse function of $\varphi$. Further, we have the following corollary, which is an important result for our later discussion.
\end{remark}

\begin{lemma}\label{pr2}
  Consider $m$ and $n$, assuming the existence of $\varphi(m)$ and $\varphi(n)$. If $m\le n\le \varphi(m) < \varphi(n)$, then $v(m,n)<0$ and $v(\varphi(m),\varphi(n))<0$.
\end{lemma}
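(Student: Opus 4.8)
\emph{Reduction.} The plan is to collapse the two asserted inequalities into one and then argue by contradiction via a descent of the same flavour as the proof of Lemma~\ref{le6}. The sign $v$ obeys the cocycle identity $v(a,b)v(b,c)=v(a,c)$ for $a\le b\le c$, which is simply the telescoping of the exponent underlying Equation~(\ref{eq27}). Computing $v(m,\varphi(n))$ along the chain $m\le n\le\varphi(m)\le\varphi(n)$ in two ways gives $v(m,n)\,v(n,\varphi(n))=v(m,\varphi(n))=v(m,\varphi(m))\,v(\varphi(m),\varphi(n))$, and since property~(1) in Lemma~\ref{le6} gives $v(m,\varphi(m))=v(n,\varphi(n))=-1$, we obtain $v(m,n)=v(\varphi(m),\varphi(n))$. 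Thus it is enough to prove $v(m,n)<0$; the second inequality then comes for free. The same property~(1) also gives $(-1)^{\varphi(m)-m}=(-1)^{\varphi(n)-n}=-1$, whence $(-1)^{n-m}=(-1)^{\varphi(n)-\varphi(m)}$, so the two crossing pairs $(m,n)$ and $(\varphi(m),\varphi(n))$ have the same parity.

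\emph{Contradiction set-up.} Suppose $v(m,n)>0$. If $m=n$ then $\varphi(m)=\varphi(n)$, contradicting $\varphi(m)<\varphi(n)$; if $n=\varphi(m)$ then $v(m,n)=v(m,\varphi(m))=-1$, contradicting $v(m,n)>0$. So $m<n<\varphi(m)<\varphi(n)$, and the cocycle identity with $v(m,n)=+1$ forces $v(n,\varphi(m))=v(m,\varphi(m))=-1$. If $n-m$ is even, then $(-1)^{\varphi(m)-n}=-1$, so $(n,\varphi(m))$ satisfies property~(1) of Lemma~\ref{le6} as a candidate endpoint-pair for $n$ while $\varphi(n)>\varphi(m)$; condition~(2) in Lemma~\ref{le6} defining $\varphi(n)$ then produces a negative point $k_1$ with $n<k_1<\varphi(m)$, $(-1)^{\varphi(m)-k_1}<0$, $v(k_1,\varphi(m))<0$, and condition~(3) defining $\psi(\varphi(m))=m$ (using $\psi=\varphi^{-1}$, as noted after Lemma~\ref{le6}) then produces $k_2$ with $k_1<k_2<\varphi(m)$, $(-1)^{k_2-k_1}<0$, $v(k_1,k_2)<0$. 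If $n-m$ is odd, then $(-1)^{\varphi(m)-n}=+1$, so $(n,\varphi(m))$ is not itself a candidate pair; one first passes to the leftmost negative point $p$ interior to $(n,\varphi(m))$, and since $v(n,p)v(p,\varphi(m))=-1$ exactly one of the two factors equals $-1$, making either $(n,p)$ a candidate endpoint-pair for $n$ or $(p,\varphi(m))$ a candidate endpoint-pair for $p$, after which the same minimality conditions apply.

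\emph{Descent.} Each configuration produced above lies inside $(n,\varphi(m))\subsetneq(m,\varphi(n))$ and hence contains strictly fewer negative points; since Lemma~\ref{le6} confines every $\varphi$-arc to a window containing at most $\delta$ consecutive negative points, this count cannot shrink indefinitely. At the bottom of the descent the interior negative point demanded by condition~(2) or~(3) simply has no room to exist — one is asked for a negative point strictly between two consecutive ones — which is the desired contradiction. Therefore $v(m,n)<0$, and so $v(\varphi(m),\varphi(n))<0$ as well.

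\emph{The hard part.} The genuinely delicate case is $n-m$ odd, where the direct data $(n,\varphi(m))$ matches no minimality trigger on its own and one must walk through the interior negative points, carrying along the signs $v(\cdot,\cdot)$ and the parities link by link, until either a legitimate witness for condition~(2) or~(3) surfaces or a no-room contradiction is hit. This is the same multi-sub-case bookkeeping that occupies the proof of Lemma~\ref{le6}; as there, the crux is to pin down a descent measure — the number of negative points in the active window — that strictly decreases at every step, so that well-foundedness closes the argument.
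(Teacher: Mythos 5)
Your argument follows essentially the same route as the paper's own proof: you use the multiplicativity of $v$ from Equation~(\ref{eq27}) to reduce both claims to the single sign of $v(n,\varphi(m))$ (equivalently, to showing $v(m,n)=v(\varphi(m),\varphi(n))$), you dispose of the degenerate cases $m=n$ and $n=\varphi(m)$ via the uniqueness of $\varphi$, and you exclude the remaining bad case $v(n,\varphi(m))<0$ by appealing to the minimality conditions (2)--(3) in the definitions of $\varphi$ and $\psi$ from Lemma~\ref{le6} — exactly the two-case sign diagram and the assertion ``the definition of $\varphi(n)$ or $\varphi(m)$ is violated'' in the paper. In fact you supply more detail than the paper does at that last step. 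The one caveat concerns that added detail: after one application of condition (2) for $\varphi(n)$ and condition (3) for $\psi(\varphi(m))$ you are left with a candidate pair $(k_1,k_2)$ that shares no endpoint with $m$, $n$, $\varphi(m)$, or $\varphi(n)$, so none of the defining conditions applies to it directly and the claimed descent does not self-evidently take another step; this is precisely the point the paper itself leaves implicit, so it is a shared rather than a newly introduced gap, but if you want your write-up to be airtight you would need to specify the descent measure and the rule that re-triggers conditions (2)--(3) on the inner pair.
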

\begin{proof}
 By the definition of $\varphi$, we know that $v(n,\varphi(n))<0$ and $v(m,\varphi(n))<0$. If $m=n$, then according to Lemma \ref{le6}, the definition of $\varphi(n)$ is violated.
 Now, we instead consider $m<n$, and we follow the notation of Lemma \ref{le6}. The sign of the function $v$ between the negative point $c_m$ and the negative point $c_{\varphi(n)}$ has the following two cases.
 $$\underset{m}{\mid}\cdots \ +\ \cdots \overset{n}{\mid} \ \ \cdots  \ - \ \cdots    \underset{\varphi(m)}{\mid}\cdots \ +\ \cdots \overset{\varphi(n)}{\mid},$$
$$\underset{m}{\mid} \cdots \ -\ \cdots \overset{n}{\mid} \ \ \cdots  \ + \ \cdots      \underset{\varphi(m)}{\mid}\cdots \ -\ \cdots  \overset{\varphi(n)}{\mid}.$$
For the first case, we have $v(n, \varphi(m)) < 0$, and by Lemma \ref{le6}, then the definition of $\varphi(n)$ or $\varphi(m)$ is violated. This leaves only the second case, in which $v(m,n)<0$ and $v(\varphi(m),\varphi(n))<0$.

\end{proof}

In our example, if we fix the starting point $i$ of the product term and iterate according to the iteration direction of the first method, then when the end point of the product term is negative point $c_j$, there may be $c_{\psi(j)}>i$. At this stage we can regard $-1<\nu(\psi(j),j)<0$ as a new element and continue to iterate, specifically
$$\mu(i,c_{\psi(j)}-1)+\mu(i,c_j)=\mu(i,c_{\psi(j)}-1)(1-\mid\nu(\psi(j),j)\mid).$$
In this way, we can skip some negative points and continue to iterate, but once there is no $\psi(j)$ or $c_{\psi(j)}\le i$, then the iteration stops. Then, we fix any starting point, which can be the sum of several polynomials according to the additive form of Equation (\ref{eq26}). In other words, the sum of the product terms starting at any starting point can only stop at some negative point if the iteration stops early. Now, let us take into account the iterative approach in the form of Lemma \ref{le5}. Due to the characteristics of one-to-one mapping, the product terms involved in the iterative routes in the same direction as that of the fixed starting point product terms differ for each route. For example, if  $\boldsymbol{a}=(1,2,1,2,2,1,1,2,2,2)$, then we have the following.
$$
\begin{matrix}
  \  & \  & b_2 & \longrightarrow & \boldsymbol{b_{c_2}}  & \longrightarrow & b_4 & \  & \  \\
  \  & \nearrow & \  & \  & \  & \ & \ & \searrow & \  \\
  \boldsymbol{b_{c_1}} & \  & \  & \  & \  & \  & \ & \ & b_5 \\
  \uparrow & \  & \  & \  & \  & \  & \ & \ & \downarrow\\
   b_{10} & \  & \  & \  & \  & \  & \ & \ & \boldsymbol{b_{c_3}} \\
  \  & \nwarrow & \  & \  & \  & \ & \ & \swarrow & \  \\
  \  & \  & b_9 & \longleftarrow & b_8 &\longleftarrow & \boldsymbol{b_{c_4}}  & \  & \
\end{matrix}
$$\\
If we take $1$ as the starting point and follow the first type of iteration direction, then the sum of products can be divided into four polynomials corresponding to the following four routes.
$$1\leftarrow b_{c_1}b_2b_{c_2}\leftarrow b_4 \leftarrow b_5,$$
$$-b_{c_1}\leftarrow b_2,$$
$$b_{c_1}b_2b_{c_2}b_4b_5b_{c_3},$$
$$-b_{c_1}b_2b_{c_2}b_4b_5b_{c_3}b_{c_4}\leftarrow b_8\leftarrow b_9.$$
Then, according to our iteration rule the route is unique and the end points of the product terms are not repeated. And the sign of each polynomial is only related to the product term involved in the iteration end point. In our example the first polynomial is $1$ which is positive, the second polynomial is $-b_{c_1}$ which is positive, the third is $b_{c_1}b_2b_{c_2}b_4b_5b_{c_3}$ which is negative and the fourth is $-b_{c_1}b_2b_{c_2}b_4b_5b_{c_3}b_{c_4}$ which is negative.

 We note that the iteration relationship between positive points is adjacent, while the iteration relationship between negative points consists of an abstract span. However, according to the lemma \ref{le6}, if the order of the negative points in the cycle is fixed, then the route generated by the part related to the negative points according to the first iteration direction is also fixed. In this way, we can study the relationship between the sum of the product terms starting from any negative point and the sum of the product terms starting from the positive point between this negative point and the previous negative point.

 For the general case, we first introduce some functions to facilitate the description of the construction of polynomials.

  \begin{definition}\label{de4}
  For any $i$, $j$, $0<j-i<\delta$, if $\varphi(i)$ (resp. $\psi(j)$) does not exist, we extend the definition $\varphi(i)=i-1$ (resp. $\psi(j)=j+1$). Define
      \begin{align*}
\psi_1(i,j)&=\psi(j)\ \ \psi_k(i,j)=\psi(\psi_{k-1}(i,j)-1),\\
\tilde{\psi}(i,j)&=\min\{\psi_k(i,j)\mid\psi_k(i,j)\ge i\},\\
 \tilde{\psi}^*(i,j)&=
 \begin{cases}
\tilde{\psi}(i,j)& \text{if $\tilde{\psi}(i,j)>i$} \\
\varphi(i)+1 & \text{if $\tilde{\psi}(i,j)=i$  }.
\end{cases}
          \end{align*}
We abbreviate $\psi_k(i,j)$ (respectively $\tilde{\psi}(i,j)$, $\tilde{\psi}^*(i,j)$) to $\psi_k(j)$ (resp. $\tilde{\psi}(j)$, $\tilde{\psi}^*(j)$).
  \end{definition}

Let us take the negative point $c_{i}$ as the starting point of the product term and take $c_i$ to be odd. At this stage, we consider the sum of
the product terms of lengths $0$ to $2h-1$. To simplify the explanation, we will not consider the case of $c_i-c_{i-1}=1$, in which no negative point exists between $c_{i-1}$ and $c_i$. Then, if $j+1<i+\delta$, the product term polynomial corresponds to the following general route.

 \begin{align*}
 &\nu(i,\tilde{\psi}(j)-1)\leftarrow  b_{c_{\tilde{\psi}(j)-1}+1}\cdots \leftarrow b_{c_{\tilde{\psi}(j)}-1}\leftarrow\cdots \leftarrow b_{c_{\psi_2(j)-1}+1}\leftarrow \cdots \leftarrow b_{c_{\psi_2(j)}-1} \\
 &\leftarrow b_{c_{\psi_2(j)}}\cdots b_{c_{\psi_1(j)-1}}\leftarrow b_{c_{\psi_1(j)-1}+1}\leftarrow \cdots \leftarrow b_{c_{\psi_1(j)}-1}\leftarrow b_{c_{\psi_1(j)}}\cdots b_{c_j}\leftarrow b_{c_j+1}\leftarrow\\
 &\cdots \leftarrow b_{c_{j+1}-1}\\
\end{align*}
In particular, if $\psi_1(j)=j+1$, the iterative route is a very simple form as follows.
$$\nu(i,j)\leftarrow b_{c_j+1}\leftarrow\cdots \leftarrow b_{c_{j+1}-1}.$$
Here $\varphi(j+1)\ge i+\delta$ or else $\varphi(j+1)$ does not exist. Note that there is not necessarily a positive point between every two negative points, so if the serial number of the point on the left side of the arrow is greater than the serial number of the point on the right side of the arrow, then this part of the route does not exist. For the time being, we don't consider the situation that there is no positive point between the two negative points. Furthermore, if the midway route $\psi$ does not exist, then it may be the case that $\psi_{k+1}(j)=\psi_k(j)$. Then, this part of the iterative route does not exist, and the iteration stops. Due to the restriction on the length of the product term, we have $j+1<i+\delta$, otherwise we iterate from end point $b_{c_{j+1}-2}$.\\
If $c_i$ is even, for the above route, we only need to change $\tilde{\psi}(j)$ to $\tilde{\psi}^*(j)$. In fact, due to the limitation of the length of the product term, no product term exists with a length of $0$ at this time, that is, $1$. Then, for a specific $j$ such that $\tilde{\psi}(j) =i$, the iterative route ends at $\varphi(i)$.

We note that if there are positive points between $c_{i-1}$ and $c_i$, the iterative route for the polynomial with these positive points as the origin of the product term is the same with respect to the negative point $c_i$  as the iterative route for the product term. Except that, we need to consider the part from the positive point to $c_i$, and the part from $c_{i+\delta-1}$ to the before the positive point.

Going back to our example, we study the polynomial of the product term starting at the positive point between $c_0$ and $c_1$. We take $10$, $9$, and $8$ as starting points and divide the sum of the products of each starting point into four polynomials corresponding to the following twelve routes.
\begin{align*}
&\qquad{}-b_{10} \leftarrow b_{c_1}b_2b_{c_2}\leftarrow b_4 \leftarrow b_5, &\quad &1\leftarrow b_{9} \leftarrow b_{10} \leftarrow b_{c_1}b_2b_{c_2}\leftarrow b_4 \leftarrow b_5, \\
&b_{10}b_{c_1} \leftarrow b_2, &\quad  &\qquad{}-b_{9}b_{10}b_{c_1} \leftarrow b_2, \\
&\qquad{}-b_{10}b_{c_1}b_2b_{c_2}b_4b_5b_{c_3},&\quad &b_9b_{10}b_{c_1}b_2b_{c_2}b_4b_5b_{c_3},\\
&b_{10}b_{c_1}b_2b_{c_2}b_4b_5b_{c_3}b_{c_4}\leftarrow b_8 \leftarrow b_9.&\quad &\qquad{}-b_9b_{10}b_{c_1}b_2b_{c_2}b_4b_5b_{c_3}b_{c_4}. \\
\end{align*}
\begin{align*}
&\qquad{}-b_{8}\leftarrow b_{9}\leftarrow b_{10} \leftarrow b_{c_1}b_2b_{c_2}\leftarrow b_4 \leftarrow b_5,  \\
&b_{8}b_{9}b_{10}b_{c_1} \leftarrow b_2, \\
&\qquad{}-b_{8}b_{9}b_{10}b_{c_1}b_2b_{c_2}b_4b_5b_{c_3},\\
&b_{8}b_{9}b_{10}b_{c_1}b_2b_{c_2}b_4b_5b_{c_3}b_{c_4}.\\
\end{align*}
We note that the second and third routes corresponding to each starting point are similar to the route of the product term with $c_1$ as the starting point, but we choose only polynomials that are positive in $c_1$ for the second kind of iteration as follows.
$$b_{8}\rightarrow b_{9}\rightarrow b_{10}\rightarrow -b_{c_1} \leftarrow b_2,$$
$$b_{8}\rightarrow b_{9}\rightarrow -b_{10}b_{c_1}b_2b_{c_2}b_4b_5b_{c_3}.$$
For the first route of each starting point, we first follow the second type of iterative method, and then for the redundant part, we can always compose it into two positive polynomials.
\begin{align*}
b_8 \rightarrow b_9 \rightarrow b_{10} \rightarrow &1\leftarrow b_{c_1}b_2b_{c_2}\leftarrow b_4 \leftarrow b_5,\\
b_8 \rightarrow &1\leftarrow  b_{9}.
\end{align*}
For the fourth route of each starting point, the polynomial of the fourth route is negative in $c_1$, so we can still compose two positive polynomials from three positive starting points.
$$b_9 \rightarrow b_{10}b_{c_1}b_2b_{c_2}b_4b_5b_{c_3}b_{c_4}\leftarrow b_8,$$
$$b_{8}b_{9}b_{10}b_{c_1}b_2b_{c_2}b_4b_5b_{c_3}b_{c_4}.$$
In this way, we keep the route of the negative polynomial starting from $c_1$, the route of the positive polynomial starting from $8910c_1$, and other parts strictly positive.
$$b_{c_1}b_2b_{c_2}b_4b_5b_{c_3},$$
$$-b_{c_1}b_2b_{c_2}b_4b_5b_{c_3}b_{c_4}\leftarrow b_8\leftarrow b_9,$$
$$b_{8}b_{9}b_{10}b_{c_1} \leftarrow b_2,$$
$$-b_{8}b_{9}b_{10}b_{c_1}b_2b_{c_2}b_4b_5b_{c_3}.$$

For the general case, if $c_i$ is odd, $\nu(i,\tilde{\psi}(j)-1)>0$ and $j+1<i+\delta$, then we can integrate the polynomial routes starting from the positive points between the two negative points into a positive polynomial route.
\begin{equation}\label{eq29}
\begin{aligned}
&b_{c_{i-1}+1}\rightarrow\cdots \rightarrow b_{c_i-1}\rightarrow \nu(i,\tilde{\psi}(j)-1)\leftarrow b_{c_{\tilde{\psi}(j)-1}+1}\\
&\leftarrow\cdots \leftarrow b_{c_{\tilde{\psi}(j)}-1} \leftarrow\cdots \leftarrow b_{c_{j+1}-1}.
\end{aligned}
\end{equation}
Furthermore, if $\nu(i,\tilde{\psi}(j)-1)<0$, $j+1<i+\delta$, we can still integrate into a positive polynomial.
\begin{align}\label{eq30}
&b_{c_{i-1}+1}\rightarrow\cdots \rightarrow b_{c_i-2}\rightarrow -b_{c_i-1}\nu(i,\tilde{\psi}(j)-1)\leftarrow b_{c_{\tilde{\psi}(j)-1}+1}\leftarrow\cdots \leftarrow b_{c_{\tilde{\psi}(j)}-1}\nonumber\\
& \leftarrow\cdots \leftarrow b_{c_{j+1}-1}.
\end{align}
For the polynomial routes starting from a positive point is the starting point, and the ending points are before negative point $c_i$, with $c_i-c_{i-1}$ even, we can integrate them into the following positive polynomial routes.
\begin{equation}\label{eq31}
\begin{aligned}
b_{c_i-3}\rightarrow &1 \leftarrow b_{c_i-2},\\
b_{c_i-5}\rightarrow &1 \leftarrow b_{c_i-4} \leftarrow b_{c_i-3} \leftarrow b_{c_i-2},\\
&\cdots ,\\
b_{c_i-2l-1}\rightarrow &1\leftarrow b_{c_i-2l} \leftarrow \cdots  \leftarrow b_{c_i-2},\\
&\cdots ,\\
b_{c_{i-1}+1}\rightarrow &1\leftarrow b_{c_{i-1}+2} \leftarrow \cdots  \leftarrow b_{c_i-2}.\\
\end{aligned}
\end{equation}
If $c_i-c_{i-1}$ is odd, we keep the last route and connect it with the route of $\nu(i,\tilde{\psi}(j)-1)=1$ to form a positive polynomial.
\begin{align}\label{eq32}
1\leftarrow b_{c_{i-1}+1} \leftarrow \cdots  \leftarrow b_{c_i-1} \leftarrow b_{c_i}\cdots b_{c_{\varphi(i)}} \leftarrow\cdots \leftarrow b_{c_{j+1}-1}.
\end{align}
If $\nu(i,\tilde{\psi}(j)-1)>0$, $j+1=i+\delta$, $\nu(i,j)<0$, we ignore for now the part in which the end point of the product term is between $c_{i+\delta-1}$ and $c_{i+\delta}$.
\begin{equation}\label{eq33}
\begin{aligned}
&b_{c_{i-1}+1}\rightarrow\cdots \rightarrow b_{c_i-1}\rightarrow \nu(i,\tilde{\psi}(j)-1)\leftarrow b_{c_{\tilde{\psi}(j)-1}+1}\leftarrow\cdots \\
&\leftarrow b_{c_{\tilde{\psi}(j)}-1} \leftarrow\cdots \leftarrow b_{c_{\psi_1(j)}}\cdots b_{c_j}.
\end{aligned}
\end{equation}
When the starting point is a positive point, the end points are the product items between negative point $c_{i+\delta-1}$ and negative point $c_{i+\delta}$. If $c_i-c_{i-1}$ is even, we can integrate them into the following positive polynomial routes.
\begin{equation}\label{eq34}
\begin{aligned}
b_{c_i-1} \rightarrow &\mu(c_{i},c_{i+\delta-1}+1)\leftarrow b_{c_{i+\delta-1}+2}\leftarrow \cdots  \leftarrow b_{c_{i+\delta}-2},\\
b_{c_i-3} \rightarrow &\mu(c_{i}-2,c_{i+\delta-1}+1) \leftarrow b_{c_{i+\delta-1}+2}\leftarrow \cdots  \leftarrow b_{c_{i+\delta}-4},\\
&\cdots ,\\
b_{c_i-2l-1}\rightarrow &\mu(c_{i}-2l,c_{i+\delta-1}+1)\leftarrow  b_{c_{i+\delta-1}+2}\leftarrow \cdots  \leftarrow b_{c_{i+\delta}-2l-2},\\
&\cdots ,\\
b_{c_{i-1}+3}\rightarrow& \mu(c_{i-1}+4,c_{i+\delta-1}+1)\leftarrow  b_{c_{i+\delta-1}+2}.\\
\end{aligned}
\end{equation}
On the other hand, if $c_i-c_{i-1}$ is odd, the last route is of the following form.
\begin{align}\label{eq35}
\mu(c_{i-1}+3,c_{i+\delta-1}+1)\leftarrow b_{c_{i+\delta-1}+2}.
\end{align}
If $\nu(i,\tilde{\psi}(j)-1)>0$, $j+1=i+\delta$, $\nu(i,j)>0$, we ignore for now the part in which the end point of the product term is between $c_{\psi(i+\delta-1)}$ and $c_{i+\delta}$.
\begin{equation}\label{eq36}
\begin{aligned}
&b_{c_{i-1}+1}\rightarrow\cdots \rightarrow b_{c_i-1}\rightarrow \nu(i,\tilde{\psi}(j)-1)\leftarrow b_{c_{\tilde{\psi}(j)-1}+1}\\
&\leftarrow\cdots \leftarrow b_{c_{\tilde{\psi}(j)}-1} \leftarrow\cdots \leftarrow b_{c_{\psi_1(j)}-1}
\end{aligned}
\end{equation}
Similarly, if $c_i-c_{i-1}$ is even, we can integrate these remaining terms into the following positive polynomials.
\begin{equation}\label{eq37}
\begin{aligned}
b_{c_i-1} \rightarrow &\nu(i,i+\delta-1)\leftarrow b_{c_{i+\delta-1}+1}\leftarrow \cdots  \leftarrow b_{c_{i+\delta}-2},\\
b_{c_i-3} \rightarrow &\mu(c_{i}-2,c_{i+\delta-1}) \leftarrow b_{c_{i+\delta-1}+1}\leftarrow \cdots  \leftarrow b_{c_{i+\delta}-4},\\
&\cdots ,\\
b_{c_i-2l-1}\rightarrow &\mu(c_{i}-2l,c_{i+\delta-1}) \leftarrow  b_{c_{i+\delta-1}+1}\leftarrow \cdots  \leftarrow b_{c_{i+\delta}-2l-2},\\
&\cdots ,\\
b_{c_{i-1}+1}\rightarrow&\mu(c_{i-1}+2,c_{i+\delta-1}).\\
\end{aligned}
\end{equation}
Finally, if $c_i-c_{i-1}$ is odd, then the form of the last route is as follows.
\begin{align}\label{eq38}
b_{c_{i-1}+2}\rightarrow\mu(c_{i-1}+3,c_{i+\delta-1})\leftarrow b_{c_{i+\delta-1}+1}.
\end{align}
Similarly, if $\nu(i,\tilde{\psi}(j)-1)<0$, we keep the negative polynomial starting from the negative point, and the remaining polynomials starting from the positive point can also be integrated into the sum of several positive polynomials through dislocation. The same is true for the case in which $c_i$ is an even number. Compared with the case of odd $c_i$, the only difference is in the length of the product term. It is always possible to integrate polynomial routes with different starting points and a common end point, with the end point not falling within the area involved by the starting point, into a route similar to (\ref{eq29}) (\ref{eq30}) (\ref{eq33}) (\ref{eq36}). Further, polynomial routes with different starting points and a common end point, with the end point present in the area involved by the starting point, can always be integrated into routes similar to (\ref{eq31}) (\ref{eq34}) (\ref{eq35}) (\ref{eq37}) (\ref{eq38}). However, unlike the case where $c_i$ is an odd number, there is currently no route similar to (\ref{eq32}).

Due to the arbitrariness of $c_i$, any negative polynomial of the product term starting from a positive point can always find a unique positive polynomial corresponding to it so that the addition of the two is strictly positive. In this way, our study of $Q$ is reduced to the study of the product term of the starting point at the negative point and at the point next to the negative point, regardless of the existence of positive points between the negative points. To summarize our discussion results, the function $Q$ can be preliminarily calculated in the form of the following lemma.

\begin{lemma}\label{le7}
The structure function $Q$ of non-random strategy $D$ is greater than the following polynomial.
\begin{align*}
Q&>\sum\limits_{i=1}^{\delta}\sum\limits_{j=i}^{i+\delta-1}\mathbb{I}_{A_i}\mathbb{I}_{C^j_i}\mathbb{I}\{\nu(i,\tilde{\psi}(j)-1)<0\}\nu(i,\tilde{\psi}(j)-1)\eta(\tilde{\psi}(j)-1,j+1)\\
&\qquad{}+\sum\limits_{i=1}^{\delta}\sum\limits_{j=i}^{i+\delta-2}\mathbb{I}_{A_{i-1}}\mathbb{I}_{C^j_{i-1}}\mathbb{I}\{\nu(i-1,\tilde{\psi}^*(j)-1)>0\}\tilde{\nu}(i-1,\tilde{\psi}^*(j)-1)\eta(\tilde{\psi}^*(j)-1,j+1)\\
&\qquad{}+\sum\limits_{i=1}^{\delta}\sum\limits_{j=i}^{i+\delta-1}\mathbb{I}_{B_i}\mathbb{I}_{C^j_i}\mathbb{I}\{\nu(i,\tilde{\psi}^*(j)-1)<0\}\nu(i,\tilde{\psi}^*(j)-1)\eta(\tilde{\psi}^*(j)-1,j+1)\\
&\qquad{}+\sum\limits_{i=1}^{\delta}\sum\limits_{j=i}^{i+\delta-2}\mathbb{I}_{B_{i-1}}\mathbb{I}_{C^j_{i-1}}\mathbb{I}\{\nu(i-1,\tilde{\psi}(j)-1)>0\}\tilde{\nu}(i-1,\tilde{\psi}(j)-1)\eta(\tilde{\psi}(j)-1,j+1).\\
\end{align*}
Here $$A_i:=\{c_i=2l+1, l\in \mathbb{N}\}, \ \ B_i:=\{c_i=2l, l\in \mathbb{N}\},$$
$$C^j_i:=\{\varphi(j+1)\ge i+\delta \ \ or\ \ \varphi(j+1)=j\}.$$
$\eta(\tilde{\psi}(j)-1,j+1)$ is a polynomial generated by the iterative route below.
\begin{align*}
&1\leftarrow  b_{c_{\tilde{\psi}(j)-1}+1}\cdots \leftarrow b_{c_{\tilde{\psi}(j)}-1}\leftarrow \cdots \leftarrow b_{c_{\psi_2(j)}}\cdots b_{c_{\psi_1(j)-1}}\leftarrow \cdots \leftarrow b_{c_{\psi_1(j)}}\cdots b_{c_j}\nonumber\\
&\leftarrow b_{c_j+1}\leftarrow\cdots \leftarrow b_{c_{j+1}-1}.
\end{align*}
\end{lemma}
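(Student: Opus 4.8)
The plan is to rewrite the double sum $Q$ of (\ref{eq26}) as a sum of manifestly positive ``iterative polynomials'' together with exactly the residual family displayed on the right-hand side, so that discarding the positive part yields the claimed strict inequality. First I would regroup $Q$ by the \emph{starting point} of each product term: (\ref{eq26}) lists, for every residue $m \bmod 2h$, all product terms $\mu(m,m+t-1)$ with length $t$ running over $0,\dots,2h-1$ when $m$ is odd and over $1,\dots,2h$ when $m$ is even. For a fixed $m$ I would treat the inner sum $\sum_t\mu(m,m+t-1)$ by the first type of iteration of Definition \ref{de3}, splitting cases according to whether $m$ is a positive or a negative point (Definition \ref{de2}) and, when $m$ is positive, according to the pair of consecutive negative points $c_{i-1}<m<c_i$ between which it lies, with $m=c_{i-1}+1$ treated separately.

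Next I would carry out the iteration. Starting from $\mu(m,e)$ one telescopes adjacent-endpoint product terms into the nested form $\mu(m,e)\bigl(1-b_{e+1}(1-b_{e+2}(\cdots))\bigr)$ as long as $b_{e+1}$ is a positive point; when the endpoint first reaches a negative point $c_j$ one invokes Lemma \ref{le6} to absorb the block $\nu(\psi(j),j)\in(-1,0)$ as a new positive-type factor and continue, iterating $\psi$ through the $\psi_k$ and stopping at $\tilde\psi(j)$, or at $\tilde\psi^*(j)$ when $c_i$ is even, as in Definition \ref{de4}. This produces exactly the general route written immediately before the lemma, terminated either by exhaustion of $\psi$ or by the product-length cap $j+1<i+\delta$ --- the latter being recorded by $\mathbb{I}_{C^j_i}$ through the value of $\varphi(j+1)$. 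By Lemma \ref{pr4}(3) the resulting route polynomial equals $(\text{leading block})\times\eta(\tilde\psi(j)-1,j+1)$ with $\eta>0$, so its sign is that of the leading block $\nu(i,\tilde\psi(j)-1)$, or of $\tilde\nu(i-1,\cdot)=\mu(c_{i-1}+1,\cdot)$, which by Lemma \ref{pr4}(2) has the sign of $\nu(i-1,\cdot)$. Routes sharing a leading block are then merged by the dislocation identities (\ref{eq29})--(\ref{eq38}): when the leading block is already positive the merged polynomial is positive outright, as in (\ref{eq29}); when it is negative one folds in the extra positive-point factor $-b_{c_i-1}$, or uses the length-zero routes (\ref{eq31}) and (\ref{eq34}), to again obtain positive polynomials as in (\ref{eq30}); the single exception is the case $c_i-c_{i-1}$ odd, which leaves one route of the form (\ref{eq32}), (\ref{eq35}) or (\ref{eq38}) beginning with $\mu(c_{i-1}+1,\cdot)=\tilde\nu(i-1,\cdot)$.

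Finally I would collect the survivors. Every route originating at a positive point is either a manifestly positive polynomial --- which I discard, making the inequality strict --- or one of the $\tilde\nu(i-1,\cdot)\,\eta(\cdot)$ terms with positive leading block; every route originating at a negative point $c_i$ contributes $\nu(i,\cdot)\,\eta(\cdot)$, and among these exactly those with negative leading block survive. Sorting the survivors by the parity of $c_i$ (the events $A_i$, $B_i$) and by the admissible length range ($j$ from $i$ to $i+\delta-1$ for the $\nu(i,\cdot)$ terms, and to $i+\delta-2$ for the $\tilde\nu(i-1,\cdot)$ terms) reproduces precisely the four sums in the statement, so that $Q=(\text{positive})+(\text{RHS})$ and hence $Q>\text{RHS}$.

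The genuine obstacle is not any single identity but the case management behind the merging step: one must keep track simultaneously of the parity of $c_i$, the parity of $c_i-c_{i-1}$, whether any positive point lies between $c_{i-1}$ and $c_i$ at all, whether a route terminates first through the length cap (via $\varphi(j+1)$) or through exhaustion of the $\psi$-iterates, and the sign of the leading block, and then verify that in \emph{every} combination the dislocation identities (\ref{eq29})--(\ref{eq38}) combine the non-surviving routes into strictly positive polynomials with no route used twice, leaving exactly the four families claimed.
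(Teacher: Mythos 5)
Your overall strategy is the paper's own: regroup $Q$ from (\ref{eq26}) by the starting point of each product term, run the first type of iteration, absorb negative endpoints via Lemma \ref{le6} and the $\psi_k$, merge the positive-point routes through the dislocation identities (\ref{eq29})--(\ref{eq38}), discard the manifestly positive polynomials, and retain exactly the four residual families. The collection of survivors you describe (negative leading blocks $\nu(i,\cdot)$ from negative starting points, positive leading blocks $\tilde{\nu}(i-1,\cdot)$ from the points $c_{i-1}+1$, sorted by the parity events $A_i$, $B_i$) matches the statement.

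However, your claim that this yields the exact decomposition $Q=(\text{positive})+(\text{RHS})$ skips the one step that is genuinely nontrivial in the paper's proof: the full-length mismatch. In (\ref{eq26}) the terms starting at odd points have lengths $0$ through $2h-1$ while those starting at even points have lengths $1$ through $2h$, so the routes generating $\eta(\tilde{\psi}(j)-1,j+1)$ do not close up exactly at $j=i+\delta-1$. The paper temporarily adjoins the length-$2h$ term to the odd-start routes and treats the $j=i+\delta-1$ case of the $c_{i-1}+1$ routes separately, leaving correction terms proportional to $\mu(1,2h)=\prod_{k=1}^{2h}b_k$, whose sign is $(-1)^{\delta}$ and hence not controllable term by term. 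One must then prove
$$\sum\limits_{i=1}^{\delta}\mu(1,2h)\Bigl(\mathbb{I}_{A_{i-1}}\mathbb{I}\{\nu(i-1,\tilde{\psi}^*(i+\delta-1)-1)>0\}-\mathbb{I}_{A_i}\mathbb{I}\{\nu(i,\tilde{\psi}(i+\delta-1)-1)<0\}\Bigr)\ge0,$$
together with its $B$-analogue, which the paper does by exhibiting, for each odd $c_i$ with $\nu(i,\tilde{\psi}(i+\delta-1)-1)<0$, a matching index $i'=\varphi(i)+1$ with $\tilde{\psi}^*(i',i'+\delta-1)=\tilde{\psi}^*(i+\delta-1)$, relying on Lemma \ref{le6} and Lemma \ref{pr2}. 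You file the length cap under bookkeeping in your last paragraph, but it is not bookkeeping: without this separate injective-matching argument the decomposition is not an identity and the strict inequality does not follow.
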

\begin{proof}
Using Equation (\ref{eq26}) and Equation (\ref{eq28}), we summarize the results of the discussion above. We reorganize the product term polynomials starting at the positive point into several positive polynomials. In the remaining part, we retain the negative polynomial starting from the negative point $c_i$, as well as some positive polynomials starting from $c_i+1$.

We fix $c_i=2l+1$, $l\in \mathbb{N}$, $i\le j \le i+\delta-1$, and then we consider a product term of length ranging from $0$ to $2h-1$ starting from negative point $c_i$. For convenience, we also temporarily consider a product term of length of $2h$. Then, the iterative route has the general form.
\begin{align*}
 &\nu(i,\tilde{\psi}(j)-1)\leftarrow  b_{c_{\tilde{\psi}(j)-1}+1}\cdots \leftarrow b_{c_{\tilde{\psi}(j)}-1}\leftarrow\cdots \leftarrow b_{c_{\psi_2(j)-1}+1}\leftarrow \cdots \leftarrow b_{c_{\psi_2(j)}-1} \\
 &\leftarrow b_{c_{\psi_2(j)}}\cdots b_{c_{\psi_1(j)-1}}\leftarrow b_{c_{\psi_1(j)-1}+1}\leftarrow \cdots \leftarrow b_{c_{\psi_1(j)}-1}\\
 &\leftarrow b_{c_{\psi_1(j)}}\cdots b_{c_j}\leftarrow b_{c_j+1}\leftarrow\cdots \leftarrow b_{c_{j+1}-1},\\
\end{align*}
where $\varphi(j+1)\ge i+\delta$ or $\varphi(j+1)=j$, $\nu(i,\tilde{\psi}(j)-1)<0$. Then, the sum of all routes in this part can be written as the following polynomial.
\begin{align*}
&\sum\limits_{i=1}^{\delta}\sum\limits_{j=i}^{i+\delta-1}\mathbb{I}_{A_i}\mathbb{I}_{C^j_i}\mathbb{I}\{\nu(i,\tilde{\psi}(j)-1)<0\}\nu(i,\tilde{\psi}(j)-1)\eta(\tilde{\psi}(j)-1,j+1)\\
&\qquad{}-\sum\limits_{i=1}^{\delta}\mathbb{I}_{A_i}\mathbb{I}\{\nu(i,\tilde{\psi}(i+\delta-1)-1)<0\}\mu(1,2h)
\end{align*}
Here $\eta(\tilde{\psi}(j)-1,j+1)$ is a polynomial generated by the iterative route above, and $$A_i:=\{c_i=2l+1, l\in \mathbb{N}\} \ \ C^j_i:=\{\varphi(j+1)\ge i+\delta \ \ or\ \ \varphi(j+1)=j\}.$$

As for the case in which the product term starts at $c_{i-1}+1$ and $c_{i-1}$ is odd, we note that $c_{i-1}+1+2h-1=c_{i+\delta-1}$, and we discuss the case of $j=i+\delta-1$ in the summation process separately. Then, the iterative route has the following general form.
$$\tilde{\nu}(i-1,\tilde{\psi}^*(j)-1)\leftarrow b_{c_{\tilde{\psi}^*(j)-1}+1}\leftarrow\cdots \leftarrow b_{c_{\tilde{\psi}^*(j)}-1} \leftarrow\cdots \leftarrow b_{c_{j+1}-1}.$$
Here $\varphi(j+1)\ge i+\delta-1$ or $\varphi(j+1)=j$, and $\tilde{\nu}(i-1,\tilde{\psi}^*(j)-1)<0$.
Then, the sum of all routes in this part can be written as the following polynomial.
\begin{align*}
&\sum\limits_{i=1}^{\delta}\sum\limits_{j=i}^{i+\delta-2}\mathbb{I}_{A_{i-1}}\mathbb{I}_{C^j_{i-1}}\mathbb{I}\{\nu(i-1,\tilde{\psi}^*(j)-1)>0\}\tilde{\nu}(i-1,\tilde{\psi}^*(j)-1)\eta(\tilde{\psi}^*(j)-1,j+1)\\
&\qquad{}+\sum\limits_{i=1}^{\delta}\mathbb{I}_{A_{i-1}}\mathbb{I}\{\nu(i-1,\tilde{\psi}^*(i+\delta-1)-1)>0\}\mu(1,2h).
\end{align*}
Similarly, for $c_{i-1}$, with $c_i$ even, and $i\le j \le i+\delta-1$, we have
\begin{align*}
&\sum\limits_{i=1}^{\delta}\sum\limits_{j=i}^{i+\delta-1}\mathbb{I}_{B_i}\mathbb{I}_{C^j_i}\mathbb{I}\{\nu(i,\tilde{\psi}^*(j)-1)<0\}\nu(i,\tilde{\psi}^*(j)-1)\eta(\tilde{\psi}^*(j)-1,j+1)\\
&\qquad{}+\sum\limits_{i=1}^{\delta}\sum\limits_{j=i}^{i+\delta-2}\mathbb{I}_{B_{i-1}}\mathbb{I}_{C^j_{i-1}}\mathbb{I}\{\nu(i-1,\tilde{\psi}(j)-1)>0\}\tilde{\nu}(i-1,\tilde{\psi}(j)-1)\eta(\tilde{\psi}(j)-1,j+1),
\end{align*}
where $B_i:=\{c_i=2l, l\in \mathbb{N}\}$. Next, we show that
$$\sum\limits_{i=1}^{\delta}\mu(1,2h)\bigg(\mathbb{I}_{A_{i-1}}\mathbb{I}\{\nu(i-1,\tilde{\psi}^*(i+\delta-1)-1)>0\}-\mathbb{I}_{A_i}\mathbb{I}\{\nu(i,\tilde{\psi}(i+\delta-1)-1)<0\}\bigg)\ge0.$$
We fix $1\le i\le \delta$. Then if $c_i$ is odd, with $\nu(i,\tilde{\psi}(i+\delta-1)-1)<0$, then $\tilde{\psi}(i+\delta-1)-1>i$. By the definition of $\tilde{\psi}$, we have $\tilde{\psi}(i+\delta-1)=\tilde{\psi}^*(i+\delta-1)$. According to Lemma \ref{le6}, we know that there must exist $i<\varphi(i)<\tilde{\psi}^*(i+\delta-1)-1$ with $c_{\varphi(i)}$ odd such that \\
$\nu(\varphi(i),\tilde{\psi}^*(i+\delta-1)-1)>0$. Now let $i'=\varphi(i)+1$. By the definition of $\tilde{\psi}^*$, we know that $$\tilde{\psi}^*(i',i'+\delta-1)=\tilde{\psi}^*(i',\varphi(i+\delta))=\tilde{\psi}^*(\varphi(i)+1,\psi(\varphi(i+\delta))-1)=\tilde{\psi}^*(i+\delta-1).$$
Similarly, if $\nu(i-1,\tilde{\psi}^*(i+\delta-1)-1)>0$ and $\tilde{\psi}^*(i+\delta-1)\neq i+\delta$, then there must exist an odd number $c_{\psi(i-1)}$ such that $\nu(\psi(i-1),\tilde{\psi}^*(i+\delta-1)-1)<0$. Now let $i'=\psi(i-1)$, and similarly we have
$$\tilde{\psi}^*(i+\delta-1)=\tilde{\psi}^*(\psi(i-1),\psi(i+\delta-1)-1)=\tilde{\psi}^*(i',i'+\delta-1)=\tilde{\psi}(i',i'+\delta-1).$$ The first equation $\tilde{\psi}^*(i+\delta-1)=\tilde{\psi}^*(\psi(i-1),\psi(i+\delta-1)-1)$ is obtained from Lemma \ref{pr2}, since otherwise it would hold that  $\tilde{\psi}^*(\psi(i-1),i+\delta-1)< i$, in which case\\ $\nu(i-1,\tilde{\psi}^*(i+\delta-1)-1)<0$ contrary to the supposition. Alternatively, if $\tilde{\psi}^*(i+\delta-1)= i+\delta$, then $\mu(1,2h)>0$. Hence we have covered all the cases, completing the proof of the lemma.
\end{proof}

The proof of the above lemma provides a particularly good example. We note that when $\nu(i,\tilde{\psi}(j)-1)<0$, $i<\tilde{\psi}(j)-1$ must exist. According to Lemma \ref{le6}, we know that $i<\varphi(i)<\tilde{\psi}(j)-1$ must exist. Below, we show that for every $i,j$ that make $\nu(i,\tilde{\psi}(j)-1)<0$, we can uniquely find $i',j'$ that make $\nu(i'-1,\tilde{\psi}^*(i',j')-1)>0$ so that the polynomial addition of the two is strictly positive.

Returning to our example, we take a simple case. We observe one of the negative polynomials generated by the negative point $c_1$. $\varphi(1)=3$, but a positive point is generated by the next point past the negative point $c_2$ to which a positive polynomial corresponds. If the product term of the negative polynomial does not exceed its own length limit, we follow the direction of the second type of iteration across negative points in the same way as across positive points. For example,
$$b_{c_1}b_2b_{c_2}\rightarrow b_4b_5b_{c_3}b_{c_4}\leftarrow b_8 \leftarrow b_9\leftarrow b_{10}.$$
Here we are using the second type of iteration, our crossing of negative points may cause the length of the product term at both ends of the route will exceed $2h$. Here is one example.
$$b_{c_1}b_2b_{c_2}b_4b_5b_{c_3},$$
$$b_4b_5b_{c_3}\leftarrow b_{c_4}b_8b_9b_{10}b_{c_1}\leftarrow b_{2}.$$
At this point, we must intercept the appropriate route into two or three positive polynomials.
$$b_{c_1}b_2b_{c_2}\rightarrow b_4b_5b_{c_3} \leftarrow b_{c_4}b_8b_9b_{10}b_{c_1},$$
$$\mu(1,10),$$
$$-\mu(1,3)\mu(4,6)\mu(7,11).$$
The length of the product term of the third polynomial above exceeds $2h$, and the length of the product term of the second polynomial is $2h$. It is obvious that the magnitude of the second term is greater than that of the third term, so the sum of these two polynomials is strictly positive.

For the general case, by the proof of Lemma \ref{le7}, if there exist $i,j$ satisfying events $A_i$, $C^j_i$ and $\nu(i,\tilde{\psi}(j)-1)<0$, then we can uniquely find $i'=\varphi(i)+1$ that satisfies the event $A_{i'-1}$ and $\nu(i'-1,\tilde{\psi}(j)-1)>0$. Then $\tilde{\psi}(j)-1\ge i'-1$, and if our iterative route of $j$ does not include $\tilde{\psi}(j)=i$, then $\tilde{\psi}(j)> i'$, $\tilde{\psi}(i',j)\neq \varphi(i')+1$. We then have $\tilde{\psi}(j)=\tilde{\psi}^*(i',j)$.
$$\nu(i,i'-1)\tilde{\nu}(i'-1,\tilde{\psi}(j)-1)\leftarrow\cdots \leftarrow b_{c_{j+1}-1},$$
$$\tilde{\nu}(i'-1,\tilde{\psi}^*(i',j')-1)\leftarrow\cdots \leftarrow b_{c_{j+1}-1}\leftarrow\cdots \leftarrow b_{c_{j'+1}-1}.$$
Because the starting points of the two product terms are different, the end point $c_{j+1}-1$ corresponding to the original starting point does not necessarily satisfy the event $C^j_{i'-1}$. But there must be a unique $j'$ that satisfies both event $C^{j'}_{i'-1}$ and $\tilde{\psi}^*(i',j)=\tilde{\psi}^*(i',j')$. By Lemma \ref{le6} and Lemma \ref{pr2}, the new iterative route must contain $c_{j+1}-1$ at this time, and it will be connected to the original route to continue the iteration.\\
If $j=j'$, then we add the two polynomials to get a positive polynomial as follows
$$b_{c_i}\cdots b_{c_{i'-1}}\rightarrow\tilde{\nu}(i'-1,\tilde{\psi}^*(i',j')-1)\leftarrow\cdots \leftarrow b_{c_{j+1}-1}.$$
If $j<j'$, then there must exist
\begin{align}\label{eq39}
i+\delta \le \varphi(j+1)<j'+1,
\end{align}
and we have $\varphi(j'+1)\ge i'-1+\delta=\varphi(i)+\delta$ or $\varphi(j'+1)$ does not exist. Then $v(j'+1,\varphi(i)+\delta)>0$, otherwise by Lemma \ref{le6}, there must exist $\varphi(j'+1)<\varphi(i)+\delta$ or $\psi(\varphi(i)+\delta)=i+\delta>j'+1$, that is contrary to Inequality (\ref{eq39}). And by Lemma \ref{pr2}, we summarize the sign of the function $v(m,n)$ between the negative point $c_{j+1}$ and the negative point $c_{\varphi(i)+\delta}$ as follows.
$$\underset{j+1}{\mid} \cdots \ -\ \cdots \overset{i+\delta}{\mid}\ \  \cdots  \ + \ \cdots      \underset{\varphi(j+1)}{\mid}\cdots  \ - \ \cdots  \ \     \underset{j'+1}{\mid}\ \ \cdots \ +\ \cdots  \overset{\varphi(i+\delta)}{\mid}.$$
Then we have $\nu(\varphi(j+1),j'+1)<0$. \\
If $\tilde{\nu}(i'-1,\varphi(j+1))>0$, then we will directly divide the polynomial route into two routes, each with a positive polynomial route.
$$b_{c_i}\cdots b_{c_{i'-1}}\rightarrow\tilde{\nu}(i'-1,\tilde{\psi}^*(i',j')-1)\leftarrow\cdots \leftarrow b_{c_{j+1}-1},$$
$$\tilde{\nu}(i'-1,\varphi(j+1))\leftarrow b_{c_{\varphi(j+1)}+1}\leftarrow\cdots \leftarrow b_{c_{j'+1}-1}.$$
Furthermore, if $\tilde{\nu}(i'-1,\varphi(j+1))<0$, then by the inequality $\nu(\varphi(j+1),j'+1)<0$, we know that $\mu(c_{i'-1}+1,c_{j'+1}-1)>0$. Then, we can reintegrate a positive polynomial route, and the sum of the other two terms is also positive.
$$b_{c_i}\cdots b_{c_{i'-1}}\rightarrow\tilde{\nu}(i'-1,\tilde{\psi}^*(i',j')-1)\leftarrow\cdots \leftarrow b_{c_{j+1}-1}\leftarrow b_{c_{j+1}}\cdots b_{c_{\varphi(j+1)}},$$
$$\mu(c_{i'-1}+1,c_{j'+1}-1),$$
$$-\mu(c_{i'-1}+1,c_{j+1}-1)\mu(c_{j+1},c_{\varphi(j+1)}).$$
Indeed, $\lvert\mu(c_{i'-1}+1,c_{j'+1}-1)\rvert\le \lvert\mu(1,2h)\rvert \le \lvert\mu(c_{i'-1}+1,c_{j+1}-1)\mu(c_{j+1},c_{\varphi(j+1)})\rvert$, so the sum of the last two items above is positive.

That is to say, when $c_i$ is an odd number, then for a negative polynomial satisfying the indicative function on the right-hand side of the inequality of $Q$ in Lemma \ref{le7}, we can always find a unique positive polynomial corresponding to it, and the sum of the two is always positive. Furthermore, following Lemma \ref{le6}, each negative polynomial corresponds to a positive polynomial, which also uniquely corresponds to the original negative polynomial. Similarly, when $c_i$ is even, the same logic applies, the only difference being that $\tilde{\psi}^*(j)=\varphi(i)+1$ must be considered additionally. Here $i'=\varphi(i)+1$, $\tilde{\psi}(i ',j')=i'=\tilde{\psi}^*(j)$, and the same conclusion can be obtained. So we have $Q>0$, which proves that Theorem \ref{th3} completes our conjecture verification.

\section*{Acknowledgments}
We have benefited from comments by the referee. We are very grateful to him for pointing out some errors in our paper and providing detailed revision suggestions. In particular, he gave an example to help readers better understand the proof of the conjecture. Chen and Liang gratefully acknowledge the support of the National Key R\&D Program of China (grant No. ZR2019ZD41).

\section*{Declarations}

\subsection*{Conflict of interest:} The authors declared that they have no conflicts of interest to this work.

\subsection*{Data availability statement:} All relevant data are within the paper.
%%===========================================================================================%%
%% If you are submitting to one of the Nature Portfolio journals, using the eJP submission   %%
%% system, please include the references within the manuscript file itself. You may do this  %%
%% by copying the reference list from your .bbl file, paste it into the main manuscript .tex %%
%% file, and delete the associated \verb+\bibliography+ commands.                            %%
%%===========================================================================================%%
\bibliography{sn-bibliography}% common bib file
\bibliographystyle{unsrt}

\end{document}